\newtheorem{thm}{Theorem}[section]
\newtheorem{prop}[thm]{Proposition}
\newtheorem{lem}[thm]{Lemma}
\theoremstyle{definition}
\newtheorem{definition}[thm]{Definition}
\newtheorem{example}[thm]{Example}
\newtheorem{rem}[thm]{Remark}
\numberwithin{equation}{section}
\newcommand{\nf}{\mathrm{nf}}
\newcommand{\aV}{\mathcal{U}}
\newcommand{\U}{\mathcal{U}}
\newcommand{\aW}{\mathcal{X}}
\newcommand{\M}{\mathcal{M}}
\newcommand{\X}{\mathcal{X}}
\newcommand{\B}{\mathcal{B}}
\newcommand{\zz}{\mathbb{Z}}
\newcommand{\qq}{\mathbb{Q}}
\newcommand{\C}{\mathcal{C}}
\newcommand{\Cp}{\mathcal{C}}
\newcommand{\Pp}{\mathcal{P}}
\newcommand{\p}{\mathbb{P}}
\newcommand{\pp}{\mathbb{P}}
\renewcommand{\H}{\mathcal{H}}
\newcommand{\Hp}{\mathcal{H}}
\newcommand{\F}{\mathcal{F}}
\renewcommand{\P}{\mathcal{P}}
\newcommand{\E}{\mathcal{E}}
\renewcommand{\O}{\mathcal{O}}
\newcommand{\Mg}{\M_g}
\newcommand{\Ub}{\mathcal{V}}
\DeclareMathOperator{\rank}{rank}
\DeclareMathOperator{\trun}{Trun}
\DeclareMathOperator{\ch}{ch}
\DeclareMathOperator{\Aut}{Aut}
\DeclareMathOperator{\GL}{GL}
\DeclareMathOperator{\SL}{SL}
\DeclareMathOperator{\Supp}{Supp}
\DeclareMathOperator{\coker}{coker}
\DeclareMathOperator{\Sym}{Sym}
\DeclareMathOperator{\PGL}{PGL}
\DeclareMathOperator{\BSL}{BSL}
\DeclareMathOperator{\BGL}{BGL}
\DeclareMathOperator{\Proj}{Proj}
\newcommand{\hannah}[1]{{\color{teal} ($\spadesuit$ Hannah: #1)}}
\begin{document}
\title{Tautological classes on Low-degree Hurwitz Spaces}

\author{Samir Canning, Hannah Larson}
\thanks{During the preparation of this article, S.C. was partially supported by NSF RTG grant DMS-1502651. H.L. was supported by the Hertz Foundation and NSF GRFP under grant DGE-1656518. This work will be part of S.C.'s and H.L.'s Ph.D. theses.}
\email{srcannin@ucsd.edu}
\email{hlarson@stanford.edu}
\subjclass[2010]{14C15, 14C17}
\maketitle
\begin{abstract}
Let $\H_{k,g}$ be the Hurwitz stack parametrizing degree $k$, genus $g$ covers of $\pp^1$. 
We define the tautological ring of $\H_{k,g}$ and we show that all Chow classes, except possibly those supported on the locus of ``factoring covers," are tautological up to codimension roughly $g/k$ when $k \leq 5$.
The set-up developed here is also used in our subsequent work \cite{part2}, wherein we prove new results about the structure of the Chow ring for $k \leq 5$.
\end{abstract}

\begin{comment}
\section*{To dos}
\begin{itemize}
    \item What should we do with script and cal letters? Should we distinguish here to make the picard paper easier or relegate all of that distinguishing to the picard paper? \hannah{I think all script letters are gone}
    \item be careful about sharp vs not sharp inequalities on the bounds. 
    \item I think we should make $\aW_{r,d}'$ be the vector bundle over $\B_{r,d}'$. This just means have to use some other letter in the open embedding lemmas. \hannah{I think this is all fixed, but keep an eye out for any that I missed}
    \item references in the intro
\end{itemize}
\end{comment}

\section{Introduction}
When studying intersection theory of moduli spaces, one often introduces certain natural or ``tautological" classes coming from the universal family. In the words of Mumford \cite{Mum}:

\begin{quote}
\vspace{.1in}
\textit{Whenever  a  variety  or  topological  space  is  defined  by  some  universal property, one expects that by  virtue of its defining property, it possesses certain  cohomology  classes  called  tautological  classes.}
\end{quote}

\vspace{.1in}
\noindent
For example, in the case of  the moduli space of curves $\M_g$, the tautological classes Mumford proposes to study are the \emph{kappa classes}, defined as follows. Let $f: \Cp \to \M_g$ be the universal curve; then $\kappa_i := f_*(c_1(\omega_f)^{i+1}) \in A^i(\M_g)$, the Chow ring of $\Mg$. The \emph{tautological ring}, denoted $R^*(\M_g) \subseteq A^*(\M_g)$, is the subring of the rational Chow ring generated by the kappa classes.

In this paper, we study the intersection theory of the Hurwitz space $\Hp_{k,g}$, the moduli space of degree $k$, genus $g$ covers of $\pp^1$, up to automorphisms of the target. Following Mumford's philosophy, let us begin by introducing a notion of tautological classes. Let $\Cp$ be the universal curve and $\Pp$ the universal $\pp^1$-fibration over the Hurwitz space $\Hp_{k,g}$:
\begin{center}
\begin{tikzcd}
\Cp \arrow{r}{\alpha} \arrow{rd}[swap]{f} & \Pp \arrow{d}{\pi} \\
& \Hp_{k,g}.
\end{tikzcd}
\end{center}
We define the tautological subring of the Hurwitz space $R^*(\Hp_{k,g}) \subseteq A^*(\Hp_{k,g})$ to be the subring generated by classes of the form $f_*(c_1(\omega_f)^i \cdot \alpha^*c_1(\omega_\pi)^j) = \pi_*(\alpha_*(c_1(\omega_f)^i)  \cdot c_1(\omega_\pi)^j)$.

In general, determining the full Chow ring of a moduli space --- such as $\M_g$ or $\Hp_{k,g}$ --- may be quite difficult. Having established a notion of tautological classes, however, it makes sense to split the study of the intersection theory of a moduli space into two parts:
\begin{itemize}
\item \textbf{Question 1:} To what extent are classes tautological? If they exist, what can we say about the support of non-tautological classes?
\vspace{.1in}
\item \textbf{Question 2:} What is the structure of the tautological ring? Although the full Chow ring may be complicated, one hopes that the tautological ring has a more easily described structure.
\end{itemize}
In this paper, we provide an answer to Question 1 for $\H_{k,g}$ with $k \leq 5$. The ground work we develop here will also be important for addressing Question 2, which we undertake in subsequent work \cite{part2}.

Before stating our results, we highlight some known results about the Chow ring of $\M_g$ related to Question 1 for context.
\begin{enumerate}
    \item[(1a)] (codimension 1) Codimension 1 classes are tautological: $A^1(\M_g) = R^1(\M_g)$ \cite{harerpic}.
    \item[(1b)] (low genus) For $g \leq 6$, all classes are tautological: $A^*(\M_g) = R^*(\M_g)$ \cite{Mum, F2, F3, Iz, PV}.
    \item[(1c)] (bielliptics) In genus $12$, the fundamental class of the bielliptic locus $B_{12}$ is \emph{not} tautological: $[ B_{12}] \notin R^*(\M_{12})$ \cite{VZ}. 
%    \item[(1d)]  (stable cohomology) Mumford conjectured, and Madsen--Weiss later proved, that the stable cohomology of $\M_g$ is tautological. \hannah{do you like that? on the other hand, it may be better to keep things just about Chow because then should we be saying some other things about cohomology? Or perhaps we could just mention this at the later point I referenced it?}
\end{enumerate}
%\sam{We could talk about Mumford conjecture here and how our result is kind of like that} \hannah{What is Mumford's conjecture? Is it that the stable cohomology is tautological?}
%\sam{yes. }
\begin{rem}
Building upon the results for Hurwitz spaces in this paper and its sequel \cite{part2}, we extend (1b) to prove $A^*(\M_g) = R^*(\M_g)$ for all $g \leq 9$ in \cite{CL}.
\end{rem}

\noindent
Meanwhile, for the Hurwitz space $\H_{k,g}$, the previously known results regarding tautological classes are as follows:

\begin{enumerate}
    \item[(1a)] (codimension 1) Codimension 1 classes are tautological $A^1(\H_{k,g}) = R^1(\H_{k,g})$ for $k \leq 5$ \cite{DP} and $k > g-1$ \cite{Mu}. The general case remains an open conjecture known as the \emph{Picard rank conjecture}.
    \item[(1b)] (low degree) For $k \leq 3$, all classes are tautological: $A^*(\H_{k,g}) = R^*(\H_{k,g})$. In the case $k = 2$, it is well-known that $A^*(\H_{2,g}) = \qq$; the case $k =3$ is due to Patel--Vakil \cite{PV2}.
\end{enumerate}

Our main theorems make significant progress towards answering Question 1 for the next open cases: the Hurwitz spaces $\H_{4,g}$ and $\H_{5,g}$. 
A degree $4$ cover $C \to \pp^1$ can factor as two degree two covers $C \to C' \to \pp^1$. Let $\H_{4,g}^{\mathrm{nf}} \subset \H_{4,g}$ denote the open locus of \emph{non-factoring covers}, or equivalently covers whose monodromy group is not contained in the dihedral group $D_4$. By $R^*(\H_{4,g}^{\mathrm{nf}})$ we mean the image of $R^*(\H_{4,g})$ under the restriction map $A^*(\H_{4,g}) \to A^*(\H_{4,g}^{\mathrm{nf}})$.

\begin{thm} \label{thm4}
If they exist, any non-tautological classes on $\H_{4,g}$ are supported on the locus of factoring covers or have codimension at least $(g+3)/4 - 4$. In other words,
\[A^i(\H_{4,g}^{\mathrm{nf}}) = R^i(\H_{4,g}^{\mathrm{nf}})  \quad \text{for all } i < (g+3)/4 - 4.\]
\end{thm}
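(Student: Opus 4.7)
The plan is to exploit the Casnati--Ekedahl structure theorem for degree 4 covers. A non-factoring cover $\alpha: C \to \pp^1$ corresponds to a pair $(E, F)$, where $E$ is a rank 3 vector bundle on $\pp^1$ of degree $g+3$ (the Tschirnhausen bundle, constructed from $\alpha_*\O_C$), and $F \subset \Sym^2 E \otimes (\det E)^{-1}$ is a rank 2 subbundle whose relative pencil of quadrics cuts out $C \subset \pp E$. The non-factoring hypothesis guarantees that the pencil of quadrics is generically irreducible and rules out the degenerate situation where the cover arises as the composite of two double covers. I would first realize $\H_{4,g}^{\nf}$ as an open substack of a parameter stack $\B$ classifying such pairs, the openness imposing smoothness of the resulting complete intersection.

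Next, stratify $\B$ by splitting type. The bundle $E$ splits on $\pp^1$ as $\O(e_1) \oplus \O(e_2) \oplus \O(e_3)$ with $e_1+e_2+e_3 = g+3$, refined further by the splitting type of $F$. On each stratum, the moduli problem admits an explicit presentation $[U_{\vec e}/G_{\vec e}]$, where $U_{\vec e}$ is an open subset of the affine space of choices of $F$ of prescribed type and $G_{\vec e}$ is the automorphism group of $E$ of splitting type $\vec e$. Standard $G_{\vec e}$-equivariant intersection theory then produces generators for the Chow ring of each stratum as Chern classes of the tautological (Casnati--Ekedahl) bundles $E$, $F$, and the relative quotient $\Sym^2 E \otimes (\det E)^{-1}/F$.

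The key remaining step is to show these Chern class generators lie in the tautological ring $R^*(\H_{4,g})$ defined in the introduction. The Chern classes of $E$ are the more accessible, since $E$ is built directly from $\alpha_*\O_C$: applying Grothendieck--Riemann--Roch to $\alpha_*\omega_f^{\otimes m}$ expresses $c_i(E)$ as polynomials in pushforwards of the form $f_*(c_1(\omega_f)^a \cdot \alpha^* c_1(\omega_\pi)^b)$. The Chern classes of $F$ require more care, as $F$ arises from the multiplication map on $\alpha_*\O_C$ via an identification essentially of the form $F \cong \ker\!\bigl(\Sym^2 (\alpha_*\O_C/\O_{\pp^1})^\vee \to (\alpha_*\O_C/\O_{\pp^1})^\vee \otimes \det\bigr)$. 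Once these identifications are in place, excision yields that any non-tautological class is supported on the complement of the open (balanced) stratum. Bounding the codimensions of the non-open strata by standard formulas for splitting-type jump loci on $\pp^1$ (whose codimensions grow linearly in the imbalance of the splitting) should then produce the stated bound $(g+3)/4 - 4$, with the exact numerical constant emerging from balancing the imbalance contributions of $E$ against those of $F$.

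The main obstacle, beyond the combinatorial bookkeeping of splitting types, is the tautological identification of $c_i(F)$, since $F$ is defined by a subbundle condition inside a symmetric square rather than directly as a pushforward from the universal curve. I expect this to require a careful analysis of the Casnati--Ekedahl resolution of $\O_C$ inside $\O_{\pp E}$ and the multiplication structure on $\alpha_*\O_C$, together with some linear-algebraic manipulation to rewrite the resulting classes in terms of the $\kappa$-style pushforwards $f_*(c_1(\omega_f)^i \cdot \alpha^* c_1(\omega_\pi)^j)$ that generate $R^*(\H_{4,g})$.
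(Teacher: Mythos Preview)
Your proposal has the right ingredients---the Casnati--Ekedahl bundles $E$ and $F$, the identification of their Chern classes as tautological via GRR and the description $F = \ker(\Sym^2 E \to \alpha_*\omega_\alpha^{\otimes 2})$---but the excision argument as stated has a genuine gap.

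The problem is the sentence ``excision yields that any non-tautological class is supported on the complement of the open (balanced) stratum.'' The complement of the balanced stratum has codimension $1$, not $(g+3)/4-4$, so this cannot produce a bound growing with $g$. What the paper actually does is work with a much larger open $\H_{4,g}^\circ \subset \H_{4,g}$, defined by the cohomological condition $h^1(\pp^1, F_\alpha^\vee \otimes \Sym^2 E_\alpha \otimes \O(-2)) = 0$, which is a union of many splitting strata. On this open, $\pi_*(\F^\vee \otimes \Sym^2 \E)$ is locally free, so $\H_{4,g}^\circ$ sits as an open inside a genuine vector bundle over a base stack $\B_{4,g}^\circ$; that is what makes its Chow ring visibly tautological. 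You cannot do this with all of $\H_{4,g}^{\nf}$ at once, because $h^0(F^\vee \otimes \Sym^2 E)$ jumps on the deeper strata and your parameter stack $\B$ is not a vector bundle there.

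The codimension bound also requires more than ``standard formulas for splitting-type jump loci.'' The codimension of the $(\vec{e},\vec{f})$ locus in $\H_{4,g}$ is
\[
h^1(\E nd(\O(\vec{e}))) + h^1(\E nd(\O(\vec{f}))) - h^1(\O(\vec{f})^\vee \otimes \Sym^2 \O(\vec{e})),
\]
and the subtracted term is precisely what makes factoring covers lie in low codimension. The non-factoring hypothesis enters as the geometric constraint $e_1 + e_3 \geq f_2$ (otherwise one quadric in the pencil becomes reducible and projection from the common line factors the cover); only with this inequality can you control the subtracted $h^1$ and push the bound up to $(g+3)/4-4$. Finally, since neither of $\H_{4,g}^{\nf}$ and $\H_{4,g}^\circ$ contains the other, the paper needs two separate codimension estimates and passes through the intersection $\H_{4,g}^{\nf} \cap \H_{4,g}^\circ$ to transfer the tautological statement from one to the other.

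The step you flag as the ``main obstacle''---showing $c_i(F)$ is tautological---is in fact routine once you write $F$ as a kernel; the real work is in choosing the right open and in the piecewise-linear optimization over allowable splitting types.
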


\begin{rem}The fact that there may be non-tautological classes on the locus of factoring covers should be compared with (1c). In fact, using van Zelm's result that $[B_{12}]$ is not tautological, we establish in \cite[Remark 1.10]{part2} that $\H_{4,12}$ indeed possesses non-tautological classes supported on the factoring locus. \end{rem}

In degree $5$, covers cannot factor, and we obtain the following result.
\begin{thm} \label{thm5}
 If they exist, any non-tautological classes on $\H_{5,g}$ have codimension at least $(g+4)/5 - 16$. In other words,
 \[A^i(\H_{5,g}) = R^i(\H_{5,g}) \quad \text{for all } i < (g+4)/5 - 16.\]
\end{thm}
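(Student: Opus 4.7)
The plan is to apply the Casnati--Ekedahl structure theorem for degree $5$ Gorenstein covers, which associates to each cover $\alpha: C \to \pp^1$ the rank $4$ Tschirnhausen bundle $E = ((\alpha_*\O_C)/\O_{\pp^1})^\vee$ and a certain rank $5$ auxiliary bundle $F$ on $\pp^1$, together with a section whose vanishing locus realizes $C \subset \pp(E)$. This identifies $\H_{5,g}$ with an open substack of a vector bundle $\X$ over a moduli stack $\B$ parametrizing such pairs $(E,F)$ with $\deg E = g+4$. No analogue of the factoring locus appears here, since $5$ is prime.

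Every bundle on $\pp^1$ splits as a sum of line bundles, so $\B$ decomposes into strata $\B_{\vec{e},\vec{f}}$ indexed by pairs of splitting types. Over each such stratum, the universal bundles decompose into tautological line bundles, and the preimage in $\X$ is presented as a quotient by the corresponding automorphism group whose equivariant Chow ring is generated by Chern classes of these line bundles. A crucial preliminary step is to verify that these generating classes pull back to tautological classes on $\H_{5,g}$ in the sense of the introduction---that is, they are expressible as $\pi$-pushforwards of monomials in $c_1(\omega_f)$ and $c_1(\omega_\pi)$. This is done using Grothendieck--Riemann--Roch applied to $\alpha$ and $\pi$ to relate Chern classes of $E$ and $F$ to pushforwards of powers of $c_1(\omega_f)$.

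The main theorem then follows from an inductive excision argument. Order the splitting-type strata by increasing codimension (most balanced first) and add them in turn. At each step, the excision sequence gives
\[
A^{i-c}(Z) \to A^i(\text{new open}) \to A^i(\text{old open}) \to 0,
\]
where $Z$ is the newly added stratum of codimension $c$ in $\H_{5,g}$. Since the Chow ring of each stratum is generated by restrictions of tautological classes, an inductive argument yields that $A^i(\H_{5,g})$ is tautological provided no added stratum of codimension $\leq i$ contributes non-tautologically. Strata with sufficiently unbalanced $(\vec{e},\vec{f})$ have codimension growing linearly in $g$ (roughly like $(g+4)/5$ for the most degenerate direction controlled by $F$), so only a bounded number of strata have codimension below the threshold $(g+4)/5 - 16$, and those must be treated individually.

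The main obstacle, and the source of the additive constant $-16$, is the combinatorial complexity introduced by having two bundles rather than one in the Casnati--Ekedahl presentation. The codimension of $\B_{\vec{e},\vec{f}}$ is given by $h^1$ of a sum of endomorphism bundles, and one must carefully track how codimensions of the various mixed strata grow in terms of both splitting types, identify which strata are borderline, and verify in each borderline case that the stratum's Chow ring is exhausted by tautological restrictions. Balancing the excision induction against this finite collection of ad hoc cases, while keeping the slack bounded by $16$, is the principal technical challenge.
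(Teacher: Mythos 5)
Your overall framework---the Casnati--Ekedahl presentation of degree $5$ covers by pairs $(E,F)$ together with a section of $\H om(E^\vee\otimes\det E,\wedge^2F)$, a moduli stack of such pairs, tautologicality of the resulting Chern classes, and excision against splitting-type strata---matches the paper's. But there are two genuine gaps. First, it is not true that all of $\H_{5,g}$ embeds as an open in a vector bundle over the stack of pairs: the pushforward of the bundle of sections jumps rank precisely on the splitting loci where $h^1>0$, so only the open $\H_{5,g}'$ where $R^1\pi_*$ of the relevant $\H om$ bundle vanishes embeds in a vector bundle $\aW_{5,g}'$ over $\B_{5,g}'$. What must therefore be bounded is the codimension \emph{in $\H_{5,g}$} of the locus of covers violating this cohomological condition, and that codimension is \emph{not} $h^1(\pp^1,\E nd(\O(\vec e)))+h^1(\pp^1,\E nd(\O(\vec f)))$ (which is the codimension of the stratum in $\B_{5,g}$); by Deopurkar--Patel it is that sum \emph{minus} $h^1(\pp^1,\O(\vec e)\otimes\wedge^2\O(\vec f)\otimes\O(-g-4))$. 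Controlling this negative correction is the real content of the theorem: one needs the geometric fact that for an irreducible cover the Pfaffian presentation forces inequalities such as $f_1+f_3+e_4\geq g+4$, $f_1+f_4+e_3\geq g+4$, $f_2+f_3+e_3\geq g+4$, which cap the number of negative summands of $\O(\vec e)\otimes\wedge^2\O(\vec f)\otimes\O(-g-4)$ at $11$ of $40$ and make the piecewise-linear minimization come out to $(g+4)/5-16$. Your proposal only ever invokes the $h^1(\E nd)$ terms, so the codimension bound---the crux of the proof---is unestablished and, without the correction and the Pfaffian constraints, could fail.

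Second, the inductive ``add strata one at a time and treat borderline strata individually'' step is both unnecessary and unexplained. It is unnecessary because once one has the corrected bound, \emph{every} stratum in the complement of the good open has codimension at least $(g+4)/5-16$ in $\H_{5,g}$; a single application of excision then identifies $A^i(\H_{5,g})$ with $A^i$ of the good open for $i$ below the threshold, and the low-codimension strata all lie inside the good open, where they are handled uniformly by the vector bundle description rather than individually. It is unexplained because excision gives $A^{i-c}(Z)\to A^i(X)\to A^i(U)\to 0$, and knowing that $A^*(Z)$ is generated by restrictions of tautological classes does not make the image of the pushforward $A^{*-c}(Z)\to A^*(X)$ tautological without further argument (one would need at least that $[Z]$ is tautological and that pushforward is compatible with the restricted generators). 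As written, ``provided no added stratum contributes non-tautologically'' assumes exactly what has to be proved.
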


Theorems \ref{thm4} and \ref{thm5} are reminiscent of the Madsen--Weiss theorem \cite{MW}, which proves Mumford's conjecture that the stable cohomology of $\Mg$ is a polynomial ring in the kappa classes. Edidin \cite[Question 3.34]{edidin} asked if the analogue of the Madsen--Weiss theorem holds in the Chow ring $A^*(\Mg)$, but very little is known about this question. We view Theorems \ref{thm4} and \ref{thm5} as providing some evidence toward a positive answer to Edidin's question.  
Unlike the case of the stable cohomology of $\Mg$, we will show in \cite{part2} that there are many interesting relations among the tautological classes on $\H_{k,g}$ when $3\leq k \leq 5$.

%In subsequent work \cite{part2}, we will show that the tautological Chow groups in the ranges considered above are non-trivial. Thus --- taking $g$ large enough --- Theorem \ref{thm5} provides examples of non-trivial Chow groups of arbitrarily high codimension which are provably all tautological.

\subsection*{Sketch of the proof}
There are three key ingredients to proving Theorems \ref{thm4} and \ref{thm5}. The set up we develop will also be essential for later results determining structure of the tautological ring in \cite{part2}. We shall therefore carry them out in the case $k = 3$ as well, which fits into the same framework.

\vspace{.1in}
 \textit{(1) Useful generators:} 
We first explain how structure theorems of Casnati--Ekedahl for finite covers give rise to a collection of classes on $\H_{k,g}$, which we term Casnati--Ekedahl (CE) classes. In Theorem \ref{CEgen}, we show that all CE classes are tautological and that they generate the tautological ring.
An interesting consequence of this is that, for fixed $k, i$, $\dim R^i(\H_{k,g})$ is bounded above, independent of $g$. (This part works for any $k$; see Remark \ref{not-growing}).

\vspace{.1in}    
\textit{(2) The good open:} For $k = 3, 4, 5$, we define a ``good open" $\H_{k,g}' \subseteq \H_{k,g}$. Using our interpretation of the Casnati--Ekedahl structure theorems, we show that this ``good open" possesses an open embedding inside a vector bundle $\X_{k,g}'$ over a moduli space $\B_{k,g}'$ of pairs of vector bundles on $\pp^1$. The pullbacks of classes along $A^*(\B_{k,g}') = A^*(\X_{k,g}') \to A^*(\H_{k,g}')$ are CE classes (essentially by the definition of CE classes). It follows that $A^*(\H_{k,g}')$ is generated by tautological classes. %\hannah{How do you feel about calling $\H_{k,g}'$ the ``good open" and $\H_{k,g}^\circ$ the ``jet-good open"? Helpful or weird? Even though $\H_{k,g}^\circ$ is not logically needed, it may make sense to introduce it in this paper (and help save space for Crelle!) We can probably get slightly improved codimension bounds on $\H_{k,g}'$.}

\vspace{.1in}
 \textit{(3) Codimension bounds:} By excision, there is a surjection $A^*(\H_{k,g}) \to A^*(\H_{k,g}')$ whose kernel is generated by classes supported on the complement of $\H_{k,g}'$. Thus, the final step is to bound the codimension of the complement of $\H_{k,g}'$. When $k = 3$, it turns out $\H_{k,g}' = \H_{k,g}$, so we recover the result of Patel--Vakil \cite{PV2} that all classes are tautological. When $k = 4$, the complement of $\H_{k,g}'$ contains the locus of covers that factor through a double cover of a low-genus curve. Thus, the complement has codimension 2. However, it turns out that the \emph{non-factoring} covers in the complement of $\H_{k,g}'$ have codimension at least $(g+3)/4 - 4$. This leads to the proof of Theorem \ref{thm4} at the end of Section \ref{co4}.
 Finally, for $k=5$, there are no factoring covers, and we show that the complement of $\H_{k,g}'$ has codimension at least $(g+4)/5 - 16$. With this, we conclude the proof of Theorem \ref{thm5} at the end of Section \ref{op5sec}.

\subsection*{Acknowledgments} We are grateful to our advisors, Elham Izadi and Ravi Vakil, respectively, for the many helpful conversations. We are grateful to Aaron Landesman for his comments and insights.

\section{Notation and conventions} \label{conventions}
We will work over an algebraically closed field of characteristic $0$ or characteristic $p>5$. All schemes in this paper will be taken over this fixed field.
\subsection{Projective bundles} \label{pandg}
We follow the subspace convention for projective bundles: given a scheme (or stack) $X$ and a vector bundle $E$ of rank $r$ on $X$, we set
\[
\p E:=\Proj(\Sym ^{\bullet} E^{\vee}),
\]
so we have the tautological inclusion
\[
\mathcal{O}_{\p E}(-1)\hookrightarrow \gamma^*E,
\]
where $\gamma :\p E\rightarrow X$ is the structure map. Set $\zeta:=c_1(\O_{\p E}(1))$. With this convention, the Chow ring of $\p E$ is given by
\begin{equation} \label{pbt}
A^*(\p E)=A^*(X)[\zeta]/\langle \zeta^r + \zeta^{r-1} c_1(E) + \ldots + c_r(E)\rangle.
\end{equation}
We call this the \emph{projective bundle theorem}.
Note that $1, \zeta, \zeta^2, \ldots, \zeta^{r-1}$ form a basis for $A^*(\pp E)$ as an $A^*(X)$-module. Since
\[ \gamma_* \zeta^i = \begin{cases} 0 & \text{if $i \leq r-2$} \\ 1 & \text{if $i = r-1$,} \end{cases}\]
this determines the $\gamma_*$ of all classes from $\p E$.

\subsection{(Equivariant) Intersection Theory} 
%All of our Chow rings will be taken with rational coefficients. 
Let $X$ be a scheme and suppose $Z \subseteq X$ is a closed subscheme of codimension $c$ and $U$ is its open complement. We denote the Chow ring of $X$ with rational coefficients by $A^*(X)$. The \emph{excision property} of Chow is the right exact sequence
\[A^{*-c}(Z) \rightarrow A^*(X) \rightarrow A^*(U) \rightarrow 0.\]
If one knows $A^*(X)$, then to find the Chow ring of an open $U \subset X$, one must describe the image of $A^{*-c}(Z) \rightarrow A^*(X)$. If $\widetilde{Z} \to Z$ is proper and surjective, then pushforward $A_*(\widetilde{Z}) \to A_*(Z)$ is surjective, see \cite[Lemma 1.2]{V}.
Given a graded ring $R = \bigoplus R^{i}$, let 
\[\trun^dR:= R/\oplus_{i \geq d} R^d\]denote the degree $d$ trunction. With this notation, if the complement of $U \subseteq X$ has codimension $c$, then the excision property implies
\begin{equation} \label{barnot}
\trun^cA^*(X) \xrightarrow{\sim} \trun^c A^*(U).
\end{equation}

Chow rings also satisfy the \emph{homotopy property}: if $V \to X$ is a vector bundle, then the pullback map $A^*(X) \to A^*(V)$ is an isomorphism. This property motivates the definition of equivariant Chow groups as developed by Edidin-Graham in \cite{EG}. Again, we will be using rational coefficients for our equivariant Chow rings.
Let $V$ be a representation of $G$ and suppose $G$ acts freely on $U \subset V$ and the codimension of $V \smallsetminus U$ is greater than $c$.
If $X$ is a smooth scheme and $G$ is a linear algebraic group acting on $X$, Edidin and Graham defined
\[A^c_G(X) := A^c((X \times U)/G),\]
and showed that the graded ring $A^*_G(X)$ possesses an intersection product.
%\hannah{maybe add sentence: The Chow ring of a Deligne--Mumford stack was defined in [Vistoli 1989]. But there's also some definition in Edidin-Graham p. 29. But they're all }
For quotient stacks, one has $A^*([X/G]) \cong A^*_{G}(X)$ by \cite[Proposition 19]{EG}, which may suffice as the definition of the Chow rings of all stacks appearing in this paper.  

%The codimension $1$ part with \emph{integral coefficients} $A^1_{G}(X)$ is isomorphic to Mumford's functorial Picard group of the stack $[X/G]$ by \cite[Proposition 18]{EG}. In particular, the Picard group of $BG$ is isomorphic to the character group of $G$. To avoid confusion about which coefficients we are considering, we will use the notation $A^*(Y)$ to refer to the Chow ring with rational coefficients and the notation $\Pic(Y)$ to refer to the Picard group (with integral coefficients) of a stack $Y$.

\par By Edidin-Graham \cite[Proposition 5]{EG}, there is also an excision sequence for equivariant Chow groups. Let $Z\subseteq X$ be a $G$-invariant closed subscheme of codimension $c$ and $U$ its complement. Then there is an exact sequence
\[
A^{*-c}_{G}(Z)\rightarrow A^*_{G}(X)\rightarrow A^*_{G}(U)\rightarrow 0.
\]
The following lemma is a useful consequence of the excision sequence. See also \cite[Theorem 2]{V2} for a much more general statement.
\begin{lem}\label{Gmbundles}
Suppose $P\rightarrow X$ is a principal $\mathbb{G}_m$-bundle. Then $A^*(P)=A^*(X)/\langle c_1(L) \rangle$, where $L$ is the corresponding line bundle.
\end{lem}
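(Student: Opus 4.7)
The plan is to realize $P$ as the complement of the zero section in the total space of $L$, and then combine the excision sequence with homotopy invariance and the self-intersection formula. Let $V = \mathrm{Tot}(L)$ denote the total space of $L$, with projection $\pi : V \to X$ and zero section $i : X \hookrightarrow V$. Since a principal $\mathbb{G}_m$-bundle is exactly the frame bundle of its associated line bundle, we have a canonical identification $P = V \smallsetminus i(X)$, where $i(X)$ is closed of codimension $1$ in $V$.

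Next, apply the excision sequence to $i(X) \subset V$ with open complement $P$:
\[
A^{*-1}(X) \xrightarrow{i_*} A^*(V) \xrightarrow{j^*} A^*(P) \to 0.
\]
By the homotopy property, $\pi^* : A^*(X) \to A^*(V)$ is an isomorphism, with inverse $i^*$ (since $\pi \circ i = \mathrm{id}_X$). I would then use this to rewrite the map $i_*$ under the identification $A^*(V) \cong A^*(X)$. Given $\alpha \in A^{*-1}(X)$, write $i_* \alpha = \pi^* \beta$ for some $\beta \in A^*(X)$; pulling back by $i^*$ yields $\beta = i^* i_* \alpha$. The self-intersection formula gives $i^* i_* \alpha = c_1(N_{X/V}) \cdot \alpha = c_1(L) \cdot \alpha$, since the normal bundle of the zero section is $L$ itself. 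Thus the map $i_*$ becomes multiplication by $c_1(L)$, and the excision sequence gives
\[
A^*(P) \cong A^*(X)/\langle c_1(L)\rangle,
\]
as claimed.

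Strictly speaking no step is a serious obstacle, since all the tools (excision, homotopy, self-intersection) are standard and available equivariantly by the Edidin--Graham formalism cited above, so the same argument applies when $X$ is a quotient stack. The one point requiring a moment of care is the identification of $i_*$ with multiplication by $c_1(L)$ via the self-intersection formula; everything else is formal from the excision sequence.
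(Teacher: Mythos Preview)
Your proof is correct and follows the same approach as the paper: identify $P$ with the complement of the zero section in $L$, apply excision together with homotopy invariance, and recognize the first map as multiplication by $c_1(L)$. In fact you supply more detail than the paper, which simply asserts that the map is multiplication by $c_1(L)$ without invoking the self-intersection formula explicitly.
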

\begin{proof}
By the correspondence between principal $\mathbb{G}_m$-bundles and line bundles over $X$, $P$ is the complement of the zero section of the line bundle $L\rightarrow X$. The excision sequence gives
\[
A^{*-1}(X)\rightarrow A^*(L)\rightarrow A^*(P)\rightarrow 0.
\]
Under the identification of $A^*(L)$ with $A^*(X)$, the first map in the above exact sequence is multiplication by $c_1(L)$, from which the result follows.
\end{proof}

\subsection{The Hurwitz space} \label{hssec}
Given a scheme $S$, an $S$ point of the \emph{parametrized Hurwitz scheme} $\H_{k,g}^\dagger$ is the data of a finite, flat map $C \to \pp^1 \times S$, of constant degree $k$ so that the composition $C \to \pp^1 \times S \to S$ is smooth with geometrically connected fibers. (We do not impose the condition that a cover $C \to \pp^1$ be simply branched.)

The \emph{unparametrized Hurwitz stack} is the $\PGL_2$ quotient of the parametrized Hurwitz scheme. There is also a natural action of $\SL_2$ on $\H_{k,g}^\dagger$ (via $\SL_2 \subset \GL_2 \to \PGL_2$). The natural map $[\H_{k,g}^\dagger/\SL_2] \to [\H_{k,g}^\dagger/\PGL_2]$ is a $\mu_2$ banded gerbe.  It is a general fact that \emph{with rational coefficients}, the pullback map along a gerbe banded by a finite group is an isomorphism \cite[Section 2.3]{PV}. In particular, since we work with rational coefficients throughout, $A^*([\H_{k,g}^\dagger/\PGL_2]) \cong A^*([\H^\dagger_{k,g}/\SL_2])$. It thus suffices to prove all statements for the $\SL_2$ quotient $[\H^\dagger_{k,g}/\SL_2]$, which we shall denote by $\H_{k,g}$ from now on.

Explicitly, the $\SL_2$ quotient $\H_{k,g}$ is the stack whose objects over a scheme $S$ are families $(C\rightarrow P\rightarrow S)$ where $P = \pp V \rightarrow S$ is the projectivization of a rank $2$ vector bundle $V$ with trivial determinant, $C\rightarrow P$ is a finite, flat, finitely presented morphism of constant degree $k$, and the composition $C\rightarrow S$ has smooth fibers of genus $g$.
The benefit of working with $\H_{k,g}$ is that the $\SL_2$ quotient is equipped with a universal $\pp^1$-bundle $\P \to \H_{k, g}$ that has a relative degree one line bundle $\O_{\P}(1)$ (a $\pp^1$-fibration does not).
Working with this $\pp^1$-bundle simplifies our intersection theory calculations.

\section{The Casnati--Ekedahl structure theorem} \label{CEsec}

The main objective of this section is to give a description of stacks of low-degree covers using structure theorems of Casnati--Ekedahl. The descriptions in Sections \ref{tripsec}--\ref{pentsec} are likely well-known but have not previously been spelled out in the language of stacks except in the degree 3 case \cite{BV}, as we shall need them. On a first pass, the reader may wish to skip forward to Section \ref{CEclass}, where we introduce natural classes coming from these structure theorems and prove that they generate the tautological ring.

Generalizing earlier results of Schreyer \cite{S} and Miranda \cite{M}, Casnati--Ekedahl \cite{CE} proved a general structure theorem for degree $k$, Gorenstein covers of integral schemes. Given a degree $k$ cover $\alpha:X\rightarrow Y$ where $Y$ is integral, one obtains an exact sequence
\begin{equation} \label{edef}
0\rightarrow \mathcal{O}_Y\rightarrow \alpha_*\mathcal{O}_X\rightarrow E_\alpha^{\vee}\rightarrow 0,
\end{equation}
where $E_\alpha$ is a vector bundle of rank $k-1$ on $Y$. 
When $\alpha$ is Gorenstein, $\alpha_* \O_X \cong (\alpha_* \omega_{\alpha})^\vee$ by Serre duality.
 Pulling back and using adjunction, we therefore obtain a map 
\begin{equation} \label{com}
\omega_{\alpha}^\vee \to (\alpha^*\alpha_* \omega_{\alpha})^\vee \rightarrow \alpha^*E_\alpha^{\vee},
\end{equation}
which induces a map $X\rightarrow \p E^{\vee}$ that factors $\alpha:X\rightarrow Y$.
\begin{example}[Covers of $\pp^1$] \label{p1ex}
If $\alpha: C \to \pp^1$ is a degree $k$, genus $g$ cover, then we have
\[\deg(E_\alpha^\vee) = \deg(\alpha_*\O_C) = \chi(\alpha_*\O_C) - k = \chi(\O_C) - k = 1 - g - k,\]
so $\deg(E_\alpha) = g+k-1$. The map $C \to \pp E_\alpha^\vee$ factors the canonical embedding $C \hookrightarrow \pp^{g-1}$, where the map $\pp E^\vee_\alpha \to \pp^{g-1}$ is given by the line bundle $\O_{\pp E_\alpha^\vee}(1) \otimes \omega_{\pp^1}$.
Each linear space in the image of $\pp E_\alpha^\vee \to \pp^{g-1}$ is the span of the image of the corresponding fiber of $C \to \pp^1$. 
\end{example}
The Casnati--Ekedahl structure theorem below gives a resolution of the ideal sheaf of $X$ inside of $\p E^{\vee}_\alpha$ \cite{CE}; see also \cite{CN}.
\begin{thm}[Casnati--Ekedahl, Theorem 2.1 of \cite{CE}]\label{CEstructure}
Let $X$ and $Y$ be schemes, $Y$ integral and let $\alpha:X\rightarrow Y$ be a Gorenstein cover of degree $k\geq 3$. There exists a unique $\p^{k-2}$-bundle $\gamma:\p\rightarrow Y$ and an embedding $i:X\hookrightarrow \p$ such that $\alpha=\gamma\circ i$ and $X_y:=\alpha^{-1}(y)\subset \gamma^{-1}(y)\cong \p^{k-2}$ is a nondegenerate arithmetically Gorenstein subscheme for each $y\in Y$. Moreover, the following properties hold.
\begin{enumerate}
    \item $\p \cong \p E^{\vee}_\alpha$ where $ E^{\vee}_\alpha:=\coker(\mathcal{O}_Y\rightarrow \alpha_*\O_X)$.
    \item The composition $\alpha^* E_\alpha \to \alpha^*\alpha_* \omega_{\alpha} \to \omega_\alpha$ is surjective (dually, \eqref{com} does not drop rank) and the ramification divisor $R$ satisfies $\O_X(R) \cong \omega_\alpha \cong \O_X(1) := i^* \O_{\pp E_\alpha^\vee}(1)$.
    \item There exists an exact sequence of locally free $\mathcal{O}_\p$ sheaves
    \begin{equation}\label{CEseq}
        0\rightarrow \gamma^*F_{k-2}(-k)\rightarrow \gamma^*F_{k-3}(-k+2)\rightarrow \cdots\rightarrow \gamma^* F_{1}(-2)\rightarrow \mathcal{O}_\p\rightarrow \mathcal{O}_X\rightarrow 0.
    \end{equation}
    where $F_i$ is locally free on $Y$.
    The restriction of the exact sequence above to a fiber gives a minimal free resolution of $X_y:=\alpha^{-1}(y)$. This sequence is unique up to unique isomorphism. Moreover the resolution is self-dual, meaning there is a canonical isomorphism $\H om_{\O_{\pp}}(F_i, F_{k-2}) \cong F_{k-2-i}$. The ranks of the $F_i$ are 
    \[ 
    \rank F_i=\frac{i(k-2-i)}{k-1}{{k}\choose{i+1}}.
    \]
    \item If $\pp \cong \pp E'^\vee$, then $E' \cong E$ if and only if $F_{k-2} \cong \det E'$ in the resolution \eqref{CEseq} computed with respect to the polarization $\O_{\pp E'^\vee}(1)$.
\end{enumerate}
\end{thm}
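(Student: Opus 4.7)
The plan is to follow the strategy in \cite{CE}: first construct the projective bundle $\pp$ and the embedding $i$, then build the resolution \eqref{CEseq} fiberwise and globalize. For (1), starting from the trace sequence $0 \to \O_Y \to \alpha_* \O_X \to E_\alpha^\vee \to 0$ --- which is split by the normalized trace in characteristic $0$ or $p > k$ --- the sheaf $E_\alpha^\vee$ is locally free of rank $k-1$. Applying Serre duality for the Gorenstein morphism $\alpha$ gives $\alpha_* \O_X \cong (\alpha_* \omega_\alpha)^\vee$, and dualizing then using adjunction produces the map \eqref{com}, hence a morphism $i: X \to \pp E_\alpha^\vee$ with $\gamma \circ i = \alpha$. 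To check $i$ is a closed embedding, I would work fiberwise: each $X_y$ is a length-$k$ Gorenstein subscheme, and the restriction of \eqref{com} is precisely its canonical map into $\pp^{k-2}$, which is a closed embedding with nondegenerate image by the defining property of $E_{\alpha,y}$.

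For (2), nondegeneracy of each $X_y \subset \pp^{k-2}$ means the canonical map from global sections is surjective fiberwise, and this globalizes to surjectivity of $\alpha^* E_\alpha \to \omega_\alpha$; the identification $\omega_\alpha \cong i^* \O_{\pp E_\alpha^\vee}(1)$ then follows by tracing this surjection through the tautological quotient $\gamma^* E_\alpha \twoheadrightarrow \O_{\pp E_\alpha^\vee}(1)$, and the ramification formula is a consequence. The main obstacle is (3): building the global self-dual resolution. Fiberwise, each $X_y \subset \pp^{k-2}$ is arithmetically Gorenstein of codimension $k-2$ and length $k$, so it admits a minimal free resolution of length $k-2$; the ranks and twists are pinned down by a Hilbert function calculation, yielding $\rank F_i = \tfrac{i(k-2-i)}{k-1}\binom{k}{i+1}$, and the self-duality $\mathcal{H}om(F_\bullet, F_{k-2}) \cong F_{k-2-\bullet}$ is an instance of the Buchsbaum--Eisenbud structure theorem for Gorenstein ideals. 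To globalize, I would apply cohomology and base change to verify that the relevant higher direct images $R^j\gamma_*(\I_X(\ell))$ are locally free on $Y$ of the expected ranks, define the $F_i$ as these direct images, and lift the differentials inductively; minimality of the fiberwise resolutions ensures the lifted complex is exact on every fiber and hence globally exact, while self-duality is inherited via relative Serre duality on $\gamma: \pp \to Y$.

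For (4), any isomorphism $\pp E^\vee \cong \pp E'^\vee$ over $Y$ corresponds to $E' \cong E \otimes L$ for a unique $L \in \Pic(Y)$, and tracking this twist through \eqref{CEseq} shows that the last term of the resolution changes by a computable power of $L$; the normalization $F_{k-2} \cong \det E'$ then pins down $L$ uniquely. I expect the inductive globalization of the differentials in (3) to be the main technical difficulty, since one must verify that the maps, which are determined only fiberwise up to a choice of minimal free resolution, lift to a globally defined complex whose terms and differentials are unique up to unique isomorphism; parts (2) and (4) are essentially formal consequences of the existence and properties of this resolution.
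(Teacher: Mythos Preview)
The paper does not prove this theorem. Theorem~\ref{CEstructure} is stated as a quotation of \cite[Theorem~2.1]{CE} and is used as a black box; the only supplementary argument the paper supplies is Remark~\ref{detiso}, which explains the canonical isomorphism $F_{k-2}\cong\det E_\alpha$ by twisting the resolution by $\O_\pp(1)$, pushing forward along $\gamma$, and chasing boundary maps. So there is no ``paper's own proof'' to compare against.

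Your outline is a reasonable sketch of the original Casnati--Ekedahl argument, and in that sense nothing is wrong with it as a summary. But since the present paper simply cites the result, the appropriate ``proof'' here is a one-line reference to \cite{CE}, not a reconstruction of their argument. If anything, the piece of content the paper actually contributes beyond the citation --- the explicit derivation of $F_{k-2}\cong\det E_\alpha$ in Remark~\ref{detiso} via the sequences $0\to A_{i+1}\to\gamma^*F_i(-i-1)\to A_i\to 0$ and Serre duality for $\gamma$ --- is not mentioned in your proposal, while your discussion of globalizing the resolution (the hard part of \cite{CE}) is not something this paper attempts.
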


\begin{rem} \label{detiso}
There is a canonical isomorphism $F_{k-2} \cong \det E_\alpha$, which we describe here. Following \cite[p. 446]{CE}, let $A_1$ be the image of $\gamma^*F_1(-2) \to \O_{\pp}$, and
for $2 \leq i \leq k-3$,  let $A_i$ denote the image of $\gamma^*F_i(-i-1) \to \gamma^*F_{i-1}(-i)$. We set $A_{k-2}$ to be $\gamma^*F_{k-2}(-k)$. We have exact sequences
\begin{equation} \label{a1}
0 \rightarrow A_1 \rightarrow \O_{\pp} \rightarrow \O_X \rightarrow 0
\end{equation}
and
\begin{equation} \label{afs} 0 \rightarrow A_{i+1} \rightarrow \gamma^*F_i(-i-1) \rightarrow A_i \rightarrow 0.
\end{equation}
First, we claim that
\[R^j \gamma_* \gamma^*F_i(-i-1) \cong \begin{cases} F_{k-2} \otimes \det E^\vee & \text{if $i = j = k-2$} \\  0 & \text{otherwise.} \end{cases}\]
This is very similar to the calculations of \cite[p. 446]{CE}, but twisted up by one.
To prove the first case above, we note that the dualizing sheaf of $\gamma$ is $\omega_\gamma = (\gamma^* \det E)(-k +1)$, and apply Serre duality for $\gamma$, which is of relative dimension $k - 2$.
The other cases follow from the theorem on cohomology and base change and the well-known cohomology of line bundles on projective space.
Tensoring the exact sequences of \eqref{afs} by $\O_{\pp}(1)$ and pushing forward by $\gamma$, the boundary maps provide us with isomorphisms
\[\gamma_* A_1(1)  \cong R^1\gamma_* A_2(1)  \cong R^2 \gamma_* A_3(1)  \cong \cdots \cong R^{k-1}\gamma_*(\gamma^*F_{k-2}(-k+1)) = 0.\]
Similarly, we have
\[R^1\gamma_* A_1(1) \cong R^2\gamma_* A_2(1) \cong \cdots \cong R^{k-2} \gamma_* (\gamma^*F_{k-2}(-k+1)) \cong F_{k-2} \otimes \det E^\vee.\]
On the other hand, tensoring \eqref{a1} with $\O_{\pp}(1)$ and pushing forward by $\gamma$ we obtain
\[0 \rightarrow E \rightarrow \alpha_* \O_X(1) \rightarrow R^1 \gamma_* A_1(1) \rightarrow 0.\]
Recall that $\O_X(1) \cong \omega_\alpha$, so
dualizing \eqref{edef} we see that the cokernel of the left map is $\O_Y$. By the universal property of cokernel, we obtain an isomorphism \[\O_Y \to R^1\gamma_*A_1(1) \cong F_{k-2} \otimes \det E^\vee,\]
or equivalently, an isomorphism $F_{k-2} \cong \det E$.
\end{rem}

 In the cases $k=3,4,5$, using self-duality, only pullbacks of the bundles $E_\alpha$ and $F_1$ and determinants and tensor products thereof appear in the resolution \eqref{CEseq}. 
 We set $F_{\alpha}:=F_1$. Twisting up \eqref{CEseq} by $\O_{\pp}(2)$ and pushing forward by $\gamma$, we see that
 \[F_{\alpha} = \ker(\Sym^2 E_\alpha \twoheadrightarrow \alpha_* \omega_\alpha^{\otimes 2}).\]
 In these low degrees $k = 3, 4, 5$, there is a special map $\delta_\alpha$ in the resolution \eqref{CEseq} from which one can reconstruct the cover. Furthermore, as we shall explain, it is an \emph{open condition} on a space of global sections of all such maps $\delta$ to define a finite cover. This is what distinguishes $k = 3, 4, 5$ and lies at the core of why our methods work in these low degrees. Below we present an equivalence of categories between the category of degree $k$, Gorenstein covers of a scheme $S$ and a category of certain linear algebraic data on $S$. The main content of this step is to point out the ``essential data" of a cover, which we may remember instead of the entire resolution. For the case of triple covers, this was done by Bolognesi--Vistoli \cite{BV}. We give a slightly different perspective below.

\subsection{The category of triple covers} \label{tripsec}
Let $\mathrm{Trip}(S)$ denote the category of Gorenstein triple covers of a scheme $S$: the objects are Gorenstein triple covers $\alpha: X \to S$ and the arrows are isomorphisms over $S$.
Specializing \eqref{CEseq} to the case $k=3$, associated to a cover $\alpha: X \to S$, one obtains a rank $2$ vector bundle $E_\alpha$ and an exact sequence
\[
0\rightarrow \O_{\p E_\alpha^\vee}(-3)\otimes \gamma^*\det E_\alpha \xrightarrow{\delta_\alpha} \mathcal{O}_{\p E_\alpha^{\vee}} \rightarrow \mathcal{O}_X\rightarrow 0.
\] 
Conversely, from the above sequence, we can recover the cover $\alpha:X\rightarrow S$. Indeed, the map $\delta_\alpha$ is a global section in $H^0(\p E_\alpha^{\vee} ,\O_{\p E^{\vee}_{\alpha}}(3)\otimes\gamma^*\det  E_\alpha^{\vee})$, whose zero locus inside of $\p E_\alpha^{\vee}$ is $X$. 
Meanwhile, given any rank $2$ vector bundle $E$ on $S$, it is an open condition on the space of sections
$H^0(\p E^{\vee},\O_{\p E^{\vee}}(3)\otimes\gamma^*\det  E^{\vee})$
for the vanishing of a section $\delta$ to define a finite triple cover: $\delta$ must not be the zero polynomial on any fiber of $\pp E \to S$. Equivalently, if
\begin{equation} \label{3com}
\Phi: H^0(S,\Sym^3  
E\otimes \det  E^{\vee})\xrightarrow{\sim} H^0(\p E^{\vee},\O_{\p E^{\vee}}(3)\otimes\gamma^*\det  E^{\vee})
\end{equation}
denotes the natural isomorphism, then $V(\delta) \subset \pp E^\vee$ is a Gorenstein triple cover so long as $\Phi^{-1}(\delta)$ is non-vanishing.

This ``essential data" is captured by a category $\mathrm{Trip}'(S)$ we now define. The objects of $\mathrm{Trip}'(S)$ are pairs $(E, \eta)$ where $E$ is a rank $2$ vector bundle and $\eta\in H^0(S,\Sym^3 E \otimes \det E^\vee)$ is non-vanshing on $S$. 
An arrow $(E_1, \eta_1) \to (E_2, \eta_2)$ in $\mathrm{Trip}'(S)$ is an isomorphism $E_1 \to E_2$ that sends $\eta_1$ into $\eta_2$.
There is a functor $\mathrm{Trip}(S) \to \mathrm{Trip}'(S)$ that sends $\alpha: X \to S$ to the pair $(E_\alpha, \Phi^{-1}(\delta_\alpha))$. %\hannah{What if we said it: we get $E_\alpha$. Then we take $\Sym^3 E_\alpha \to \alpha_* \omega_{\alpha}$. It has a kernel $L \to \Sym^3 E_\alpha$ and we also have a canonical isomorphism of $L$ with $\det E_\alpha$ produced by the remark. Using that, obtain $\eta_\alpha := \det E_\alpha \cong L \to \Sym^3 E_\alpha$.} 
There is also a functor $\mathrm{Trip}'(S) \to \mathrm{Trip}(S)$ that sends a pair $(E, \eta)$ to the triple cover $V(\Phi(\eta)) \subset \pp E^\vee \to S$.
The following is essentially a restatement of \cite[Theorem 3.4]{CE}, which was proved earlier by Miranda \cite{M}.

\begin{thm}[Miranda, Casnati--Ekedahl] \label{CE3}
The functors above define an equivalence of categories $\mathrm{Trip}(S) \cong \mathrm{Trip}'(S)$.
\end{thm}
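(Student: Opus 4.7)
The plan is to check that the two functors described are well-defined, verify the two round-trip compositions are naturally isomorphic to the identity, and finally check naturality on morphisms. Since the authors indicate this is essentially a restatement of the Casnati--Ekedahl result for $k = 3$, the main content is the bookkeeping needed to package it as an equivalence of categories on the nose.

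First I would verify well-definedness on objects. Given a Gorenstein triple cover $\alpha: X \to S$, Theorem \ref{CEstructure}(1) produces a rank $2$ bundle $E_\alpha$, and the resolution \eqref{CEseq} specialized to $k=3$ reduces to a single map
\[
\delta_\alpha : \O_{\pp E_\alpha^\vee}(-3) \otimes \gamma^*\det E_\alpha \to \O_{\pp E_\alpha^\vee}.
\]
The nondegeneracy of $X_s \subset \gamma^{-1}(s) \cong \pp^1$ for every $s \in S$ (in particular $X_s \neq \pp^1$) implies that $\delta_\alpha$ does not vanish identically on any fiber, so $\Phi^{-1}(\delta_\alpha)$ is non-vanishing. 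In the opposite direction, given $(E, \eta)$ with $\eta$ non-vanishing, $\Phi(\eta)$ cuts out a degree $3$ subscheme in each fiber of $\pp E^\vee \to S$; flatness of $V(\Phi(\eta)) \to S$ then follows from constancy of the fiberwise Hilbert polynomial, and the Gorenstein property holds because $V(\Phi(\eta))$ is cut out by a single equation in a $\pp^1$-bundle.

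Next I would verify the two round-trip compositions. Starting from $\alpha: X \to S$, Theorem \ref{CEstructure}(1) directly identifies $X$ with $V(\delta_\alpha) \subset \pp E_\alpha^\vee$, so the composition $\mathrm{Trip}(S) \to \mathrm{Trip}'(S) \to \mathrm{Trip}(S)$ is naturally the identity. For the other composition, given $(E, \eta)$ and $\alpha: V(\Phi(\eta)) \to S$, pushing forward the Koszul-type resolution
\[
0 \to \O_{\pp E^\vee}(-3) \otimes \gamma^*\det E \to \O_{\pp E^\vee} \to \O_{V(\Phi(\eta))} \to 0
\]
by $\gamma$ and applying Serre duality on the $\pp^1$-fibers identifies $\alpha_*\O_{V(\Phi(\eta))}$ with an extension of $E^\vee$ by $\O_S$. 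By the uniqueness of the sequence \eqref{edef}, this yields a canonical isomorphism $E_\alpha \cong E$, and unwinding the construction of $\delta_\alpha$ shows it corresponds to $\Phi(\eta)$ under this identification.

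Naturality on morphisms then follows in both directions from the functoriality of the constructions: an isomorphism of triple covers over $S$ induces an isomorphism of the defining sequence \eqref{edef} and of the entire resolution \eqref{CEseq}, hence of the data $(E_\alpha, \delta_\alpha)$, while an isomorphism $(E_1, \eta_1) \cong (E_2, \eta_2)$ induces an isomorphism $\pp E_1^\vee \to \pp E_2^\vee$ over $S$ carrying $V(\Phi(\eta_1))$ onto $V(\Phi(\eta_2))$. The main obstacle is the identification $E_\alpha \cong E$ in the round trip starting from $\mathrm{Trip}'(S)$: making this isomorphism canonical and functorial in $(E, \eta)$ requires carefully tracking the pushforward computation and the boundary map arising from the Koszul resolution, in parallel with the approach used in Remark \ref{detiso}. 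Once this naturality is established, the remaining verifications are routine.
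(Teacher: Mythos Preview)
The paper does not actually supply a proof of Theorem \ref{CE3}: it simply attributes the result to Miranda and Casnati--Ekedahl and moves on. So there is nothing to compare your argument against directly. Your outline is correct and is essentially the $k=3$ specialization of the argument the paper \emph{does} write out for Theorems \ref{CE4} and \ref{C5}: the pushforward of the Koszul resolution along $\gamma$, combined with Serre duality using $\omega_\gamma = (\gamma^*\det E)(-2)$, gives the canonical identification $E_\alpha^\vee \cong R^1\gamma_*(\gamma^*\det E)(-3) \cong E^\vee$, exactly as you sketch. The only simplification you gain in degree $3$ relative to the paper's degree $4$ and $5$ arguments is that there is no second bundle $F$ and no auxiliary isomorphism $\phi$ to track, so the rescaling step (the choice of $t \in \O_S^\times(S)$ needed to make the determinant squares commute in \eqref{ddiag}--\eqref{d2} and their degree $5$ analogues) does not arise.
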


\subsection{The category of quadruple covers} \label{quadsec}
Let $\mathrm{Quad}(S)$ denote the category whose objects are Gorenstein quadruple covers $\alpha: X \to S$ and whose arrows are isomorphisms over $S$.
Associated to a degree $4$ cover $\alpha: X \to S$, there is a rank $3$ vector bundle $E_\alpha$ and a rank $2$ vector bundle $F_\alpha$ and a resolution
\begin{equation}\label{deg4res}
0\rightarrow \gamma^*\det  E_\alpha(-4)\rightarrow \gamma^*F_\alpha(-2)\xrightarrow{\delta_\alpha} \O_{\p E_\alpha^\vee} \rightarrow \O_X\rightarrow 0.
\end{equation}
The section $\delta_\alpha \in H^0(\pp E_{\alpha}^{\vee},\O_{\pp E_{\alpha}^{\vee}}(2) \otimes \gamma^* F^\vee)$ corresponds to a relative pencil of quadrics. The cover 
$X$ can be recovered as the vanishing locus of $\delta_\alpha$. By comparing \eqref{deg4res} with the Koszul resolution of $\delta_\alpha$, 
\begin{equation}\label{koszul}
0\rightarrow \gamma^*\det F_\alpha(-4)\rightarrow \gamma^*F_\alpha(-2)\xrightarrow{\delta_\alpha} \O_{\p E^{\vee}_{\alpha}}\rightarrow \O_X\rightarrow 0,
\end{equation}
the uniqueness of Theorem \ref{CEstructure} (3) induces a distinguished isomorphism $\phi_\alpha: \det F_\alpha \cong \det E_\alpha$ (see \cite[p. 450]{CE}). 

We now define a category $\mathrm{Quad}'(S)$ of the corresponding linear algebraic data of a quadruple cover. Given vector bundles $E, F$ on $S$,
there is a natural isomorphism
\begin{equation} \label{4comp}
\Phi: H^0(S, F^\vee \otimes \Sym^2 E) \xrightarrow{\sim} H^0(\pp E^\vee, \gamma^*F^\vee\otimes \O_{\pp E^{\vee}}(2)).
\end{equation}

\begin{definition}
Let $E$ and $F$ be vector bundles of ranks $3$ and $2$ respectively on $S$. We say that a section $\eta \in H^0(S, F^\vee \otimes \Sym^2 E)$ \emph{has the right codimension} at $s\in S$ if the vanishing locus of $\Phi(\eta)$ restricted to the fiber over $s \in S$ is zero dimensional.
\end{definition}

The objects of $\mathrm{Quad}'(S)$ are tuples $(E, F, \phi, \eta)$ where $E$ and $F$ are vector bundles of ranks $3$ and $2$ respectively, $\phi: \det F \cong \det E$ is an isomorphism and $\eta \in H^0(S, F^\vee \otimes \Sym^2 E)$ has the right codimension at all $s \in S$. An arrow in $\mathrm{Quad}'(S)$ is a pair of isomorphisms $\xi: E_1 \to E_2$, and $\psi: F_1 \to F_2$, such that the following diagrams commute
\begin{center}
\begin{tikzcd}
F_1 \arrow{d}[swap]{\psi} \arrow{r}{\eta_1} &\Sym^2 E_1 \arrow{d}{\Sym^2 \xi} \\
F_2 \arrow{r}{\eta_2} &\Sym^2 E_2
\end{tikzcd}
\hspace{1in}
\begin{tikzcd}
\det F_1 \arrow{r}{\phi_1} \arrow{d}[swap]{\det \psi} & \det E_1 \arrow{d} {\det \xi} \\
\det F_2 \arrow{r}{\phi_2} & \det E_2.
\end{tikzcd}
\end{center}

There is a functor $\mathrm{Quad}(S) \to \mathrm{Quad}'(S)$ that sends $\alpha:X \to S$ to $(E_\alpha, F_\alpha, \phi_\alpha, \eta_\alpha)$ where $\eta_\alpha := \Phi^{-1}(\delta_\alpha)$.
%\hannah{maybe this part is confusing because when we go through CE, we're going up and down and up and down. But: given a cover $X \to S$, we get $E_\alpha = \alpha_* \omega_\alpha$ and we get $F_\alpha = \ker \Sym^2 E_\alpha \to \alpha_*\omega_\alpha^{\otimes 2}$, so like, don't we already have a god-given $\eta_\alpha: F_\alpha \to \ker \Sym^2 E_\alpha$? If you were to change $F_\alpha$ by an automorphism you have to change $\eta_\alpha$ with it...except also, with $F_\alpha$ being a subbundle of $\Sym^2 E_\alpha$, can't we ``see" if the automorphism of $F_\alpha$ moves inside $\Sym^2 E_\alpha$?}
There is also a functor $\mathrm{Quad}'(S) \to \mathrm{Quad}(S)$ that sends a tuple $(E, F, \phi, \eta)$ to the quadruple cover $V(\Phi(\eta)) \subset \pp E^\vee \to S$. The following is essentially a restatement of \cite[Theorem 4.4]{CE}.
\begin{thm}[Casnati--Ekedahl]\label{CE4}
The functors above define an equivalence of categories $\mathrm{Quad}(S) \cong \mathrm{Quad}'(S)$.
\end{thm}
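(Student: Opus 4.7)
The plan is to closely mimic the degree $3$ case, but with the Koszul resolution of a regular section of a rank $2$ bundle in place of a single section of a line bundle. I will check that each of the two functors is well defined and that their compositions are naturally isomorphic to the identity; essential surjectivity and full faithfulness will then follow from the uniqueness clauses of the CE theorem.

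For the forward functor $\mathrm{Quad}(S) \to \mathrm{Quad}'(S)$, the tuple $(E_\alpha, F_\alpha, \phi_\alpha, \eta_\alpha)$ is read off Theorem~\ref{CEstructure}: the bundles $E_\alpha, F_\alpha$ and the section $\delta_\alpha$ come from the CE resolution \eqref{deg4res}, the isomorphism $\phi_\alpha$ arises by comparing \eqref{deg4res} with the Koszul resolution \eqref{koszul} (using uniqueness of the CE resolution up to unique isomorphism), and $\eta_\alpha := \Phi^{-1}(\delta_\alpha)$ has the right codimension at every $s \in S$ because the fibers $\alpha^{-1}(s)$ are zero dimensional. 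Functoriality on isomorphisms follows from the same uniqueness clause.

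For the reverse functor $\mathrm{Quad}'(S) \to \mathrm{Quad}(S)$, given $(E, F, \phi, \eta)$, I would form $X := V(\Phi(\eta)) \subset \pp E^\vee$. The right-codimension hypothesis ensures that fiberwise $\Phi(\eta)|_s$ is a regular section of a rank $2$ bundle on $\pp^2$, whose zero locus is a zero-dimensional complete intersection of two conics in $\pp^2$, of length $4$ by Bezout. The Koszul complex of $\Phi(\eta)$ is then exact and takes the form of \eqref{koszul}; fibral flatness upgrades $X \to S$ to a finite flat cover of degree $4$, and since each fiber is a local complete intersection, the cover is Gorenstein. The datum $\phi$ is not needed to construct $X$, but will be used to recover $E$ canonically from $X$.

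To verify quasi-invertibility, one composition is immediate: $\mathrm{Quad}(S) \to \mathrm{Quad}'(S) \to \mathrm{Quad}(S)$ recovers $V(\delta_\alpha) = X$ by Theorem~\ref{CEstructure}(1). For the other composition, starting from $(E, F, \phi, \eta)$ and setting $\alpha: X \to S$, the Koszul resolution of $\O_X$ inside $\pp E^\vee$ is a resolution of the shape of \eqref{deg4res}; by uniqueness (Theorem~\ref{CEstructure}(3)) this forces $F_\alpha \cong F$, and by part~(4) together with Remark~\ref{detiso}, the datum $\phi: \det F \cong \det E$ pins down an isomorphism $E_\alpha \cong E$ identifying the canonical $\phi_\alpha$ with the input $\phi$. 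The main obstacle I anticipate is precisely this last identification: verifying that the $\phi_\alpha$ obtained from matching \eqref{deg4res} with \eqref{koszul} agrees with the input $\phi$ on the nose rather than up to an extra scalar, and that pairs $(\xi, \psi)$ in $\mathrm{Quad}'(S)$ are in bijection with cover isomorphisms $X_1 \cong X_2$ under these identifications—a matter of tracking the canonical isomorphisms in the CE uniqueness statement carefully.
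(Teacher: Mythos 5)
Your outline follows the same architecture as the paper's proof (cite Casnati--Ekedahl for the composition $\mathrm{Quad}(S) \to \mathrm{Quad}'(S) \to \mathrm{Quad}(S)$, then construct a natural isomorphism of the other composition with the identity), but there is a genuine gap at exactly the point you flag and then defer. The difficulty is real: the isomorphism $\phi_\alpha: \det F_\alpha \cong \det E_\alpha$ produced by comparing \eqref{deg4res} with \eqref{koszul} does \emph{not} in general agree with the input $\phi$ under the isomorphisms $\xi: E \to E_\alpha$ and $\psi: F \to F_\alpha$ that you construct; it agrees only up to a unit $t \in \O_S^\times(S)$, and no amount of ``tracking canonical isomorphisms'' makes that unit disappear. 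Asserting that $\phi$ ``pins down an isomorphism $E_\alpha \cong E$ identifying the canonical $\phi_\alpha$ with the input $\phi$'' is therefore not justified, and without resolving this your functor $\mathrm{Quad}'(S) \to \mathrm{Quad}(S) \to \mathrm{Quad}'(S)$ has no arrow to the identity.

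The missing idea is a rescaling trick exploiting the ranks $\rank E = 3$, $\rank F = 2$. An arrow in $\mathrm{Quad}'(S)$ compares $\eta$ via the pair $(\psi, \Sym^2\xi)$, so replacing $(\xi, \psi)$ by $(t\cdot\xi,\ t^2\cdot\psi)$ preserves commutativity of the $\eta$-diagram (since $\Sym^2(t\xi) = t^2\,\Sym^2\xi$), while the determinant diagram transforms by $\det(t^2\psi)/\det(t\xi) = t^4/t^3 = t$. Hence whatever unit discrepancy exists between $\phi$ and $\phi_\alpha$ can be absorbed by choosing $t$ appropriately, and the pair $(t\xi, t^2\psi)$ is the desired arrow $(E, F, \phi, \eta) \to (E_\alpha, F_\alpha, \phi_\alpha, \eta_\alpha)$. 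A secondary point: rather than invoking uniqueness of the CE resolution to get ``some'' isomorphism $F \cong F_\alpha$ (which you would then still need to check is compatible with $\eta$ and $\eta_\alpha$), it is cleaner to define $\psi$ by the universal property of the kernel in the commuting square relating $\eta: F \to \Sym^2 E$ and $\eta_\alpha: F_\alpha \to \Sym^2 E_\alpha$ via $\Sym^2\xi$; this builds the required compatibility in from the start.
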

\begin{proof}
Work of Casnati--Ekedahl established that the composition $\mathrm{Quad}(S) \to \mathrm{Quad}'(S) \to \mathrm{Quad}(S)$ is equivalent to the identity, as $V(\delta_\alpha) \to S$ is naturally identified with the cover $\alpha: X \to S$. 

We must provide a natural isomorphism of  $\mathrm{Quad}'(S) \to \mathrm{Quad}(S) \to \mathrm{Quad}'(S)$ with the identity on $\mathrm{Quad}'(S)$.  Suppose we are given $(E, F, \phi, \eta) \in \mathrm{Quad}'(S)$.
We want to define an arrow $(E, F, \phi, \eta) \to (E_\alpha, F_\alpha, \phi_\alpha, \eta_\alpha)$.
Let $X = V(\Phi(\eta)) \subset \pp E^\vee$, and $\alpha: X \to S$. The Kosul resolution of $\Phi(\eta)$ is
\[0 \rightarrow (\gamma^*\det F)(-4) \rightarrow \gamma^*F(-2) \xrightarrow{\Phi(\eta)} \O_{\pp E^\vee} \rightarrow \O_X \rightarrow 0\]
and is exact since $\eta$ has the right codimension at all $s \in S$.
We break this into two sequences
\begin{equation} \label{fh} 0 \rightarrow (\gamma^*\det F)(-4) \rightarrow \gamma^*F(-2) \rightarrow A \rightarrow 0
\end{equation}
and
\begin{equation} \label{sh}
0 \rightarrow A \rightarrow \O_{\pp E^\vee} \rightarrow \O_X \rightarrow 0.
\end{equation}
Pushing forward  \eqref{sh} we get a short exact sequence on $S$:
\[0 \rightarrow \O_S \rightarrow \alpha_*\O_X \rightarrow R^1\gamma_* A \rightarrow 0.\]
Using \eqref{fh}, we obtain isomorphisms $R^1\gamma_*A \cong R^2\gamma_*(\gamma^*\det F)(-4) \cong \det F \otimes R^2\gamma_* \O_{\pp E^\vee}(-4)$. Because the dualizing sheaf of $\gamma$ is $\omega_\gamma = \O_{\pp E^\vee}(-3) \otimes \gamma^*\det E$, using Serre duality, we obtain an isomorphism $R^2\gamma_* \O_{\pp E^\vee}(-4) \cong \det E^\vee \otimes E^\vee$. Now the universal property of cokernel produces an isomorphism
\[E_\alpha^\vee = \coker(\O_S \to \alpha_* \O_X) \xrightarrow{\sim} R^1\gamma_*A \cong  \det F \otimes \det E^\vee \otimes E^\vee.\]
Meanwhile $\phi$ determines an isomorphism $\det F \otimes \det E^\vee \cong \O_S$. Composing with this, and dualizing, we obtain an isomorphism $\xi: E \to E_\alpha$.
%In turn, this induces isomorphisms $\O_{\pp E^\vee}(1)|_X \cong \O_{\pp E^\vee_\alpha}(1)|_X \cong \omega_\alpha$. 
Next, we have a commuting diagram
\begin{center}
\begin{tikzcd}
0 \arrow{r} & F \arrow[dashed]{d}[swap]{\psi} \arrow{r}{\eta} & \Sym^2 E \arrow{r}\arrow{d}{\Sym^2 \xi}& \alpha_* \O_X(2) \arrow{r} \arrow{d} & 0 \\
0 \arrow{r} & F_\alpha \arrow{r}{\eta_\alpha} &\Sym^2 E_\alpha \arrow{r}  & \alpha_* \omega_\alpha^{\otimes 2} \arrow{r} & 0
\end{tikzcd}
\end{center}
where the left vertical map is induced by the universal property of kernel.
Note that for any $t \in \O_S^{\times}(S)$, the diagram 
\begin{equation} \label{ddiag}
\begin{tikzcd}
F \arrow{d}[swap]{t^2 \cdot \psi} \arrow{r}{\eta} & \Sym^2 E \arrow{d}{\Sym^2(t \cdot \xi)}\\
F_\alpha \arrow{r}{\eta_\alpha} &\Sym^2 E_\alpha 
\end{tikzcd}
\end{equation}
also commutes.
Finally, the cover $\alpha$ determines an isomorphism $\phi_\alpha: \det F_\alpha \cong \det E_\alpha$. It may not agree with $\phi$, but since the maps below involve isomorphisms of line bundles, there exists some $t \in \O_S^\times(S)$ such that the following diagram commutes
\begin{center}
\begin{tikzcd}
\det F \arrow{r}{\phi} \arrow{d}[swap]{t\cdot \det \psi} & \det E \arrow{d} {\det \xi} \\
\det F_\alpha \arrow{r}{\phi_\alpha} & \det E_\alpha.
\end{tikzcd}
\end{center}
Since $E$ is rank $3$ and $F$ is rank $2$, this implies the diagram
\begin{equation} \label{d2}
\begin{tikzcd}
\det F \arrow{r}{\phi} \arrow{d}[swap]{\det(t^2 \cdot \psi)} & \det E \arrow{d} {\det(t \cdot \xi)} \\
\det F_\alpha \arrow{r}{\phi_\alpha} & \det E_\alpha
\end{tikzcd}
\end{equation}
also commutes. Thus, the pair of isomorphisms
$t \cdot \xi: E \to E_\alpha$ and $t^2 \cdot \psi: F \to F_\alpha$
determine an arrow $(E, F, \phi, \eta) \to (E_\alpha, F_\alpha, \phi_\alpha, \eta_\alpha)$.
\end{proof}

\subsection{The category of regular pentagonal covers} \label{pentsec}
By the Casnati--Ekedahl theorem, each degree $5$ Gorenstein cover $\alpha: X \to S$ determines a resolution
\begin{equation} \label{res5}
0\rightarrow \gamma^*\det  E_\alpha(-5)\rightarrow \gamma^*(F_\alpha^{\vee}\otimes\det E_\alpha)(-3)\xrightarrow{\delta_\alpha} \gamma^*F_\alpha(-2)\rightarrow \O_\p\rightarrow \O_X\rightarrow 0,
\end{equation}
where $E_\alpha$ has rank $4$ and $F_\alpha$ has rank $5$.
Casnati showed that the map $\delta_\alpha$ is alternating in the sense that it can be identified with a section of $\wedge^2\pi^*F_\alpha \otimes \gamma^*\det  E_\alpha^{\vee}(1)$. For any pair of vector bundles $E$ and $F$, via push-pull, we have an identification
\begin{equation} \label{comp}
\Phi: H^0(S, \H om(E^\vee \otimes \det E, \wedge^2 F)) \xrightarrow{\sim} H^0(\pp E^\vee, \gamma^*(\wedge^2 F \otimes \det E^\vee)(1)).
\end{equation}
Hence, $\delta_\alpha$ corresponds to a map $\eta_\alpha := \Phi^{-1}(\delta_\alpha): E_\alpha^\vee \otimes \det E_\alpha \to \wedge^2 F_\alpha$. Throughout this section we shall write $E' := E^\vee \otimes \det E$.
A degree $5$ cover $\alpha: X \to S$ is called \emph{regular} if $\eta_\alpha$ is injective as a map of vector bundles (i.e. the cokernel of $\eta_\alpha$ is locally free). Casnati notes that if $\alpha^{-1}(s)$ is a local complete intersection scheme for all $s \in S$, then $\alpha$ is regular, so all covers we need will be regular. We let $\mathrm{Pent}(S)$ denote the category whose objects are regular, degree $5$ Gorenstein covers $\alpha : X \to S$ and arrows are isomorphisms over $S$.

Regular degree $5$ covers have a nice geometric description. Indeed, if the cover is regular, then $\eta_\alpha$ corresponds to an injective map $ 
E'_\alpha \rightarrow \wedge^2 F_\alpha$, which induces an embedding
\begin{equation} \label{emb}
\p E_\alpha' \hookrightarrow \p(\wedge^2 F_\alpha).
\end{equation}
Given a section $\delta \in H^0(\pp E^\vee, \gamma^*(\wedge^2 F \otimes \det E^\vee)(1))$, we let $D(\delta) \subset \pp E^\vee$ be the subscheme defined by the vanishing of $4 \times 4$ Pfaffians of $\delta$. When $\alpha$ is regular, we can recover $X = D(\delta_\alpha)$, which is also the same as the scheme defined by the $3 \times 3$ minors of $\delta_\alpha$ (Proposition 3.5 of \cite{C}). These $3 \times 3$ minors are pullbacks to $\pp E_\alpha'$ along \eqref{emb} of the equations that define the Grassmannian bundle $G(2,F_\alpha) \subset \p(\wedge^2 F_\alpha)$ under its relative Pl\"ucker embedding.  Using a resolution of the relative Grassmannian, Casnati obtains another resolution of $\O_X$ in equation (3.5.2) of \cite{C}. 
Comparing this resolution with \eqref{res5},  the uniqueness of Theorem \ref{CEstructure} (2) induces a distinguished isomorphism $\epsilon: F_\alpha \otimes \det F_\alpha^\vee \otimes (\det E_\alpha)^{\otimes 2} \to F_\alpha$ (see p. 467 of \cite{C}). Moreover, both of these vector bundles arise as subbundles of $\Sym^2 E_\alpha$ and the projectivization of $\epsilon$ induces the identity on points (as it must be the restriction of the identity on $\pp(\Sym^2 E_\alpha)$).
Hence, we obtain an isomorphism of line bundles
\[\O_{\pp F_\alpha}(1) \otimes \det F_\alpha^\vee \otimes (\det E_\alpha)^{\otimes 2} \cong  
\O_{\pp(F_\alpha \otimes \det F_\alpha^\vee \otimes \det E_\alpha^2)}(1) \cong \epsilon^*\O_{\pp F_\alpha}(1) = \O_{\pp F_\alpha}(1).
\]
which induces a distinguished isomorphism $\phi_\alpha:  (\det E_\alpha)^{\otimes 2} \cong \det F_\alpha$.

Now we define a category $\mathrm{Pent}'(S)$ that keeps track of the associated linear algebraic data of regular degree $5$ covers.

\begin{definition}
Suppose we are given vector bundles $E$ and $F$ on $S$ of ranks $4$ and $5$. Let $\eta \in H^0(S, \H om(E', \wedge^2 F))$ be a global section. We say
$\eta$ has the \emph{right codimension} if every fiber of $D(\Phi(\eta)) \subset \pp E^\vee \to S$ is $0$-dimensional and $\eta: E' \to \wedge^2 F$ is injective with locally free cokernel.
\end{definition}
We define $\mathrm{Pent}'(S)$ to be the category whose objects are tuples $(E, F, \phi, \eta)$ where $E$ and $F$ are vector bundles on $S$ of ranks $4$ and $5$ respectively, $\phi$ is an isomorphism $(\det E)^{\otimes 2} \cong \det F$ and $\eta \in H^0(S, \H om(E^\vee \otimes \det E, \wedge^2 F))$ has the right codimension.
An arrow $(E_1, F_1, \phi_1, \eta_1) \to (E_2, F_2, \phi_2, \eta_2)$ in $\mathrm{Pent}'(S)$ is pair of isomorphisms $\xi: E_1 \to E_2$ and $\psi: F_1 \to F_2$ such that the following two diagrams commute
\begin{center}
\begin{tikzcd}
E_1'  \arrow{d}[swap]{\det \xi \otimes (\xi^{-1})^\vee} \arrow{r}{\eta_1} & \wedge^2 F_1 \arrow{d}{\wedge^2 \psi} \\
E_2' \arrow{r}{\eta_2} &\wedge^2 F_2
\end{tikzcd}
\hspace{1in}
\begin{tikzcd}
\det E_1^{\otimes2}\arrow{d}[swap]{(\det \xi)^{\otimes2}} \arrow{r}{\phi_1} & \det F_1 \arrow{d}{\det \psi}\\
\det E_2^{\otimes2} \arrow{r}{\phi_2} & \det F_2.
\end{tikzcd}
\end{center}
There is a functor $\mathrm{Pent}(S) \to \mathrm{Pent}'(S)$ that sends $\alpha:X \to S$ to the tuple $(E_\alpha, F_\alpha, \phi_\alpha, \eta_\alpha)$. There is also a functor $\mathrm{Pent}'(S) \to \mathrm{Pent}(S)$ that sends a tuple $(E, F, \phi, \eta)$ to the degree $5$ cover $D(\Phi(\eta)) \subset \pp E^\vee \to  S$. The following is essentially a restatement of \cite[Theorem 3.8]{C}.

\begin{thm}[Casnati] \label{C5}
The above functors define an equivalence of categories $\mathrm{Pent}(S) \cong \mathrm{Pent}'(S)$.
\end{thm}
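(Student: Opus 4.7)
The argument follows the template of Theorem~\ref{CE4}. For the composition $\mathrm{Pent}(S) \to \mathrm{Pent}'(S) \to \mathrm{Pent}(S)$, Casnati's Proposition~3.5 of \cite{C} gives $D(\delta_\alpha) = X$, and since $\delta_\alpha = \Phi(\eta_\alpha)$ under the natural identification, this composition is naturally isomorphic to the identity on $\mathrm{Pent}(S)$.

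For the reverse composition, let $(E, F, \phi, \eta) \in \mathrm{Pent}'(S)$ and set $X := D(\Phi(\eta)) \subset \pp E^\vee$ with structure map $\alpha: X \to S$. The right codimension hypothesis makes $\alpha$ finite flat of degree $5$, and the injectivity of $\eta$ with locally free cokernel produces an exact Pfaffian resolution
\begin{equation*}
0 \to \gamma^*\det E(-5) \to \gamma^*(F^\vee \otimes \det E)(-3) \xrightarrow{\Phi(\eta)} \gamma^*F(-2) \to \O_{\pp E^\vee} \to \O_X \to 0,
\end{equation*}
so $\alpha$ is regular Gorenstein and this is its Casnati resolution. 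Break the resolution into short exact sequences, twist appropriately, and push forward by $\gamma$ using relative Serre duality (with $\omega_\gamma = \O(-4) \otimes \gamma^*\det E$) and the vanishings $R^i\gamma_*\O(-d) = 0$ for $i \neq 3$ and $1 \leq d \leq 3$. An analogous computation to that in Theorem~\ref{CE4} yields
\begin{equation*}
0 \to \O_S \to \alpha_*\O_X \to E^\vee \to 0 \quad \text{and} \quad 0 \to F \to \Sym^2 E \to \alpha_*\omega_\alpha^{\otimes 2} \to 0,
\end{equation*}
producing canonical isomorphisms $\xi: E \xrightarrow{\sim} E_\alpha$ and (using $\Sym^2\xi$) $\psi: F \xrightarrow{\sim} F_\alpha$.

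By the uniqueness clause of Theorem~\ref{CEstructure}, the pair $(\xi, \psi)$ intertwines the Pfaffian resolution above with the Casnati resolution of $\alpha$, which forces the commutativity of the $\eta$-diagram (since $\Phi(\eta)$ matches $\delta_\alpha$ under these identifications). The subtle step will be verifying the $\phi$-compatibility: the rescaling trick used in Theorem~\ref{CE4} is unavailable here, because preserving $\eta$-compatibility under $(\xi, \psi) \mapsto (t\xi, s\psi)$ imposes the rigid relation $s^2 = t^3$, leaving essentially no freedom to absorb a scalar in the $\phi$-diagram. Instead, one uses that $\phi_\alpha$ itself is constructed in \cite[p.~467]{C} from the same self-dual structure of the resolution that produces $\xi, \psi$: it arises from comparing the Casnati resolution with the Grassmannian resolution of $X$ cut out by the Pl\"ucker equations of $G(2, F) \subset \pp(\wedge^2 F)$ under the embedding \eqref{emb}. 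Tracing this comparison directly on the input data $(E, F, \eta)$ should recover $\phi$, making the $\phi$-diagram commute on the nose. I expect this final compatibility check --- carefully translating Casnati's definition of $\phi_\alpha$ into a statement about the input $\phi$ --- to be the main technical obstacle in writing out the full proof.
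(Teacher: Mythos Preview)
Your main concern is misplaced: the rescaling trick \emph{does} work in degree $5$, and this is exactly what the paper does. You correctly observe that preserving the $\eta$-diagram under $(\xi,\psi)\mapsto(t\xi,s\psi)$ forces $s^2=t^3$ (the left vertical scales by $t^3$ since $\rank E=4$, the right by $s^2$). But this cuts out a one-parameter family $(t,s)=(u^2,u^3)$, not a single point. Under such a rescaling, the $\phi$-diagram has its left side scaled by $(u^2)^8=u^{16}$ and its right side by $(u^3)^5=u^{15}$, so the discrepancy scales by $u$. Hence any unit $x\in\O_S^\times(S)$ obstructing the $\phi$-diagram is absorbed by taking $u=x$. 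The paper implements this in two steps: first absorb the scalar $t$ arising from projective commutativity of the $\eta$-diagram via $(\xi,\psi)\mapsto(t\xi,t\psi)$, then absorb the remaining $\phi$-discrepancy $x$ via $(t\xi,t\psi)\mapsto(x^2t\xi,x^3t\psi)$. Your proposed alternative of ``tracing Casnati's definition of $\phi_\alpha$ directly'' is unnecessary.

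There is a second, related gap. You assert that the uniqueness clause of Theorem~\ref{CEstructure} forces the $\eta$-diagram to commute on the nose. It does not: uniqueness only identifies the resolutions up to isomorphism, which at the level of the middle map gives commutativity after projectivization. The paper argues this geometrically---the induced map $G(2,F)\to G(2,F_\alpha)$ carries $X$ to $X$, and since the points of $X$ span each fiber of $\pp E'$, the projectivized square commutes---and then extracts a scalar $t$ to be absorbed as above. A further minor point: the paper works with Casnati's Grassmannian resolution (3.5.2), whose terms involve $\det F$, rather than the Pfaffian resolution you wrote; the pushforward then naturally produces $E_\alpha^\vee\cong \det F^{-2}\otimes\det E^4\otimes E^\vee$, and $\phi$ is needed to turn this into $\xi:E\cong E_\alpha$. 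Your sequence $0\to\O_S\to\alpha_*\O_X\to E^\vee\to 0$ skips this step.
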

\begin{proof}
The fact that $\mathrm{Pent}(S) \to \mathrm{Pent}'(S) \to \mathrm{Pent}(S)$ is equivalent to the identity was established by Casnati. We provide further details here that $\mathrm{Pent}'(S) \to \mathrm{Pent}(S) \to \mathrm{Pent}'(S)$ is naturally isomorphic to the identity on $\mathrm{Pent}'(S)$. Let $(E, F, \phi, \eta) \in \mathrm{Pent}(S)$ be given and let $X = D(\Phi(\eta))$ and $\alpha: X \to S$. 
By  (3.5.2) of \cite{C}, $\O_X$ admits a resolution
\begin{align*}
&0 \rightarrow \gamma^*(\det F^{-2} \otimes \det E^5(-5) \rightarrow \gamma^*(F^\vee \otimes \det F^{-1} \otimes \det E^3)(-3) \\
 &\qquad \qquad \qquad \rightarrow \gamma^*(F \otimes \det F^{-1} \otimes \det E^2)(-2) \rightarrow \O_{\pp E^\vee} \rightarrow \O_X \rightarrow 0.
\end{align*}
Let $A_1$ be the image of $\gamma^*(F \otimes \det F^{-1} \otimes \det E^2)(-2) \to \O_{\pp E^\vee}$.
When we push forward the above equation by $\gamma$, we obtain
\[0 \rightarrow \O_S \rightarrow \alpha_* \O_X \rightarrow R^1\gamma_* A_1 \rightarrow 0.\]
We use a similar method as in Remark \ref{detiso} to produce isomorphisms
\[E_\alpha \cong R^1\gamma_* A_1 \cong R^2\gamma_* A_2 \cong R^3\gamma_* (\gamma^*(\det F^{-2} \otimes \det E^5))(-5) \cong \det F^{-2} \det E^4 \otimes E^\vee.\]
Using $\phi$, we turn this into an isomorphism $E_\alpha^\vee \cong E^\vee$, which we dualize to define $\xi: E \cong E_\alpha$.
Using the uniqueness of the CE resolution, we also get an isomorphism $F \otimes \det F^{-1} \otimes \det E^2 \to F_{\alpha}$. Making use of $\phi$ again, we obtain an isomorphism $\psi: F  \cong F_{\alpha}$. This in turn induces a map $G(2, F) \to G(2, F_\alpha)$ which sends $X$ into $X$. Since the points of $X$ span each fiber of $\pp E' \cong \pp E_\alpha'$, the following diagram of linear maps of spaces commutes
\begin{center}
\begin{tikzcd}
\pp E'  \arrow{d} \arrow{r} & \pp(\wedge^2 F) \arrow{d} \\
\pp E_\alpha' \arrow{r} & \pp(\wedge^2 F_\alpha).
\end{tikzcd}
\end{center}
In other words, there exists $t \in \O_S^{\times}(S)$ such that the first diagram below commutes, and, since $E$ has rank $4$, so does the second:
\begin{equation} \label{ew}
\begin{tikzcd}
E'  \arrow{d}[swap]{t \cdot \det \xi \otimes (\xi^{-1})^\vee} \arrow{r}{\eta} & \wedge^2 F \arrow{d}{\wedge^2 \psi} \\
E_\alpha' \arrow{r}{\eta_\alpha} &\wedge^2 F_\alpha
\end{tikzcd}
\hspace{1in}
\begin{tikzcd}
E'  \arrow{d}[swap]{\det(t\cdot \xi) \otimes ((t \cdot \xi)^{-1})^\vee} \arrow{r}{\eta} & \wedge^2 F \arrow{d}{\wedge^2 (t \cdot \psi)} \\
E_\alpha' \arrow{r}{\eta_\alpha} &\wedge^2 F_\alpha.
\end{tikzcd}
\end{equation}

Finally, we must compare $\phi$ and $\phi_\alpha$. Since all the maps involved are isomorphisms of line bundles, there exists some $x \in \O_S^\times(S)$ such that the first diagram below commutes; recalling that $E$ is rank $4$ and $F$ is rank $5$, hence so does the second:
\begin{center}
\begin{tikzcd}
\det E^{\otimes2} \arrow{d}[swap]{x \cdot \det (t \cdot \xi)^2} \arrow{r}{\phi} & \det F \arrow{d}{\det(t \cdot \psi)}\\
\det E_\alpha^{\otimes2} \arrow{r}{\phi_\alpha} & \det F_\alpha
\end{tikzcd}
\hspace{1in}
\begin{tikzcd}
\det E^{\otimes2} \arrow{d}[swap]{\det (x^2t \cdot \xi)^2} \arrow{r}{\phi} & \det F \arrow{d}{\det(x^3 t \cdot \psi)}\\
\det E_\alpha^{\otimes2} \arrow{r}{\phi_\alpha} & \det F_\alpha.
\end{tikzcd}
\end{center}
Finally, note that
\begin{center}
\begin{tikzcd}
E'  \arrow{d}[swap]{\det(x^2t\cdot \xi) \otimes ((x^2t \cdot \xi)^{-1})^\vee} \arrow{r}{\eta} & \wedge^2 F \arrow{d}{\wedge^2 (x^3t \cdot \psi)} \\
E_\alpha' \arrow{r}{\eta_\alpha} &\wedge^2 F_\alpha.
\end{tikzcd}
\end{center}
also commutes, as it just rescales both vertical maps of the second diagram in \eqref{ew} by $x^6$. Hence, pair of isomorphisms $x^2t \cdot \xi: E \to E_\alpha$ and $x^3t \cdot \psi: F \to F_\alpha$ define an arrow $(E, F, \phi, \eta) \to (E_\alpha, F_\alpha, \phi_\alpha, \eta_\alpha)$ in $\mathrm{Pent}'(S)$.
%\hannah{If the left-hand map were $\det \xi \otimes \xi^\vee$ (without the inverse), then rescaling $\xi$ by $x$ would rescale the left map by $x^5$. Then, once the first diagram commutes, there's no rescaling that will make the second commute! Because, if you are going to rescale $\xi$ by $x^a$ and $\psi$ by $x^b$ then you'd need $8a - 5b = 1$ and $5a = 2b$, which has no integer solution. But since the map is $\det \xi \otimes (\xi^{-1})^\vee$, when you rescale $\xi$ by $x$, the left map rescales by $x^3$ and there IS an integer solution to $8a - 5b = 1$ and $3a = 2b$ namely, $a = 2, b = 3$. This seems so special to me that this worked out now, like this linear system appeared that depended on the particular ranks and the fact that we're supposed to have $\det E^2 \cong \det F$, and I'm really impressed it had an integer solution. I guess that's part of the magic. I really hope this is correct now...}
\end{proof}

\subsection{Casnati--Ekedahl classes} \label{CEclass}
We now define some preferred generators for $R^*(\H_{k,g})$ using the Chern classes of vector bundles appearing in the Casnati--Ekedahl resolution. 
%The same constructions may be made with script font to define analogous rational classes on $\Hp_{k,g}$.
Let $\pi: \mathcal{P} \to \H_{k,g}$ denote the universal $\pp^1$-bundle and $\alpha: \C \to \P$ the universal degree $k$ cover. We define $z :=  - \frac{1}{2} c_1(\omega_\pi) = c_1(\O_{\P}(1))$ and \begin{equation} \label{cdef}
c_2 := c_2(\pi_* \O_{\P}(1)) \qquad \Rightarrow \qquad z^2 + \pi^* c_2 = 0,
\end{equation}
where the equality on the right follows from \eqref{pbt}.
%We now define some natural classes on $\H_{k,g}$ for $k = 3, 4, 5$ using the Chern classes of vector bundles appearing in the Casnati--Ekedahl resolution. Specifically, let $\pi: \mathcal{P} \to \H_{k,g}$ denote the universal $\pp^1$-bundle and $\alpha: \C \to \P$ the universal degree $k$ cover. We define $z := c_1(\O_{\P}(1))$ and \begin{equation} \label{cdef}
%c_2 := c_2(\pi_* \O_{\P}(1)) \qquad \Rightarrow \qquad z^2 = \pi^* c_2.
%\end{equation}
Define $\E^\vee := E_\alpha^{\vee}$ to be the cokernel of $\O_{\P} \to \alpha_* \O_{\C}$, which is a rank $k-1$ vector bundle on $\P$.  For $i = 1, \ldots, k-1$, we define classes $a_i \in A^i(\H_{k,g})$ and $a_i' \in A^{i-1}(\H_{k,g})$ by the formula
\begin{equation} \label{adef}
a_i := \pi_*(z \cdot c_i(\E)), \quad a_i' := \pi_*(c_i(\E)) 
\qquad \Rightarrow \qquad 
c_i(\E) = \pi^* a_i + \pi^* a_i' z .
\end{equation}
By Example \ref{p1ex}, $\E$ has relative degree $g + k-1$ on the fibers of $\P \to \H_{k,g}$, so $a_1' = g + k - 1$.
By the Casnati--Ekedahl structure theorem, the universal curve $\C$ embeds in $\pp \E^\vee$. We have the associated Casnati--Ekedahl resolution
\[
        0\rightarrow \gamma^*\mathcal{F}_{k-2}(-k)\rightarrow \gamma^*\mathcal{F}_{k-3}(-k+2)\rightarrow \cdots\rightarrow \gamma^*\mathcal{F}_{1}(-2)\rightarrow \mathcal{O}_{\p\E^{\vee}}\rightarrow \mathcal{O}_\C\rightarrow 0.
\]
For each bundle $\mathcal{F}_j$, we define 
\[
f_{j,i} := \pi_*(z \cdot c_i(\F_j)), \quad f_{j,i}' := \pi_*(c_i(\F_j)) 
\qquad \Rightarrow \qquad 
c_i(\F_j) = \pi^* f_{j,i} + \pi^* f_{j,i}' z .
\]
\begin{definition}\label{CEring}
We define $c_2, a_i, a_i', f_{j,i}, f_{j,i}'$ to be the \emph{Casnati--Ekedahl classes}, abbreviated CE classes.
\end{definition}

\begin{thm} \label{CEgen}
The CE classes are tautological and they generate the tautological ring $R^*(\H_{k,g})$.
\end{thm}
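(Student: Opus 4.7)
The plan is to prove the two assertions (tautologicality and generation) separately, with Grothendieck--Riemann--Roch (GRR) applied to the finite flat cover $\alpha:\C\to\P$ serving as the bridge between the two generating sets. The central observation is that the Chern classes of $\E$ and $\F_j$ can be written as polynomials in the $\alpha$-pushforwards $\alpha_*(c_1(\omega_\alpha)^j)$, and vice versa.

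For tautologicality, we begin with $c_2$: the projective bundle relation $z^2=-\pi^*c_2$ from \eqref{cdef} yields $\pi_*(c_1(\omega_\pi)^3)=8c_2$, so the tautological class $\pi_*(\alpha_*(1)\cdot c_1(\omega_\pi)^3)=8kc_2$ (obtained by taking $i=0,\,j=3$) displays $c_2$ as a rational multiple of a tautological class. For $a_i$ and $a_i'$, the exact sequence $0\to\O_\P\to\alpha_*\O_\C\to\E^\vee\to 0$ implies $c(\E^\vee)=c(\alpha_*\O_\C)$, so each $c_i(\E)$ is determined by $\ch(\alpha_*\O_\C)$, which by GRR for $\alpha$ equals $\alpha_*$ of a polynomial in $c_1(\omega_\alpha)$. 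Substituting $c_1(\omega_\alpha)=c_1(\omega_f)-\alpha^*c_1(\omega_\pi)$ and using the projection formula rewrites each $c_i(\E)\in A^*(\P)$ as a polynomial in the classes $\alpha_*(c_1(\omega_f)^m)\cdot c_1(\omega_\pi)^\ell$; applying $\pi_*$, either directly or after multiplying by $z$, then exhibits $a_i$ and $a_i'$ as tautological.

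For $f_{j,i}$ and $f_{j,i}'$, we use that $\F_j$ is determined up to unique isomorphism by the CE resolution (Theorem \ref{CEstructure}), and extract $\ch(\F_j)$ inductively by twisting the resolution by $\O_{\pp\E^\vee}(m+1)$ for various $m\geq 0$ and applying $R\gamma_*$, in the spirit of Remark \ref{detiso}. This expresses each $\ch(\F_j)$ in terms of the Chern characters of $\Sym^m\E$ and $\alpha_*\omega_\alpha^{\otimes(m+1)}$ (using $\O_\C(1)\cong\omega_\alpha$), which are in turn polynomials in the classes $\alpha_*(c_1(\omega_\alpha)^s)$ by a further application of GRR. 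Combining these, each $c_\ell(\F_j)\in A^*(\P)$ is a polynomial in the $c_m(\E)$ and in classes $\alpha_*(c_1(\omega_\alpha)^s)$, and pushing forward to $\H_{k,g}$ using $z^2=-\pi^*c_2$ to reduce powers of $z$ yields $f_{j,i}$ and $f_{j,i}'$ as tautological.

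For generation, we take an arbitrary tautological generator $\pi_*(\alpha_*(c_1(\omega_f)^i)\cdot c_1(\omega_\pi)^j)$. Expanding $c_1(\omega_f)=c_1(\omega_\alpha)+\alpha^*c_1(\omega_\pi)$ and using the projection formula reduces it to sums of the form $\pi_*(\alpha_*(c_1(\omega_\alpha)^a)\cdot c_1(\omega_\pi)^b)$. The GRR relation above is lower triangular in the $\alpha_*(c_1(\omega_\alpha)^n)$ with nonzero rational diagonal, so it can be inverted to give each $\alpha_*(c_1(\omega_\alpha)^a)\in A^*(\P)$ as a polynomial in the $c_i(\E)$. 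Writing $c_i(\E)=\pi^*a_i+\pi^*a_i'\cdot z$ and simplifying powers of $z$ via $z^2=-\pi^*c_2$ before applying $\pi_*$ produces a polynomial in $c_2, a_i, a_i'$, all of which are CE classes. The main obstacle is the $\F_j$ step: although the strategy parallels that for $\E$, extracting individual Chern characters from the CE resolution requires a careful inductive use of the short exact sequences obtained by truncating the resolution, together with computations of $R^\bullet\gamma_*$ on the twisted terms, generalizing the analysis of $F_{k-2}\cong\det\E$ done in Remark \ref{detiso}.
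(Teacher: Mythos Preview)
Your tautologicality argument is essentially the same as the paper's: both use GRR for $\alpha$ to show the Chern classes of $\E$ are polynomials in the $\alpha_*(c_1(\omega_\alpha)^j)$ and $z$, and then extract the Chern classes of $\F_j$ inductively from the CE resolution (the paper phrases this via $\F_1=\ker(\Sym^2\E\to\alpha_*\omega_\alpha^{\otimes 2})$ and so on, but the content is the same).

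The generation argument, however, has a genuine gap. You assert that the GRR relation expressing $\ch_n(\alpha_*\O_\C)$ in terms of the $\alpha_*(c_1(\omega_\alpha)^m)$ is ``lower triangular with nonzero rational diagonal'' and can therefore be inverted to write each $\alpha_*(c_1(\omega_\alpha)^a)$ as a polynomial in the $c_i(\E)$ alone. This is false. The relative Todd class of $\alpha$ is $\td(T_f)/\alpha^*\td(T_\pi)$; writing $w=c_1(\omega_\alpha)$ and $v=\alpha^*c_1(\omega_\pi)$, its restriction to $v=0$ is $w/(e^w-1)=\sum_n \frac{B_n}{n!}w^n$, so the ``diagonal'' coefficient of $\alpha_*(w^n)$ in $\ch_n(\alpha_*\O_\C)$ is $B_n/n!$, which vanishes for every odd $n\ge 3$. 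Worse, the conclusion you draw is not merely unproven but wrong for $k\ge 4$: for $k=4$ one computes directly from $[\C]=c_2(\F^\vee(2))$ and the projective bundle relations that
\[
\alpha_*\big(c_1(\omega_\alpha)^3\big)=2c_1(\E)^3-6c_1(\E)c_2(\E)+4c_3(\E)+c_1(\E)\,c_2(\F),
\]
which genuinely involves $c_2(\F)$. So the tautological ring is not generated by $c_2,a_i,a_i'$ alone when $k\ge 4$.

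The paper avoids this by taking a different route for generation: it uses the embedding $\iota:\C\hookrightarrow\pp\E^\vee$ and the identity $\iota^*\zeta=c_1(\omega_\alpha)$ to write $\alpha_*(c_1(\omega_\alpha)^i)=\gamma_*([\C]\cdot\zeta^i)$. The class $[\C]=\ch_{k-2}(\iota_*\O_\C)$ is then read off from the CE resolution as a polynomial in $\zeta$ and the $c_j(\F_\ell)$, and $\gamma_*$ converts powers of $\zeta$ into polynomials in the $c_i(\E)$ via the projective bundle relation. This is why the $\F_j$-classes enter the generating set, and why your attempt to bypass them cannot succeed.
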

\begin{rem} \label{not-growing}
The ranks of the $\F_i$ depend only on $i$ and $k$, so this bounds the number of generators of $R^*(\H_{k,g})$ and their degrees in terms of $k$ (independent of $g$).
\end{rem}
\begin{proof}
First, we show that the Casnati--Ekedahl classes are tautological.
Let us call a class on $\P$ \emph{pre-tautological} if it is a polynomial in $z$ and classes of the form $\alpha_*(c_1(\omega_f)^j)$. By the push-pull formula, the $\pi$ pushforward of a pre-tautological class is tautological. Therefore, our goal is to show that the Chern classes of $\E$ and $\F_i$ are pre-tautological.

By Grothendieck--Riemann--Roch and the splitting principle, we have that the Chern classes of $\alpha_*(\omega_{\alpha}^{\otimes i}) = \alpha_*(\omega_f^{\otimes i}) \otimes (\omega_\pi^\vee)^{\otimes i}$ are pre-tautological. In particular, the Chern classes of $\E$ are pre-tautological by its defining exact sequence. By the construction of the Casnati--Ekedahl sequence, $\F_1$ is the kernel of a surjective map $\Sym^2 \E \twoheadrightarrow \alpha_*(\omega_{\alpha}^{\otimes 2})$, so the Chern classes of $\F_1$ are pre-tautological. Similarly, following the construction of $\F_i$ on \cite[p. 445-446]{CE} and using the splitting principle, we inductively see that the Chern classes of all $\F_i$ are pre-tautological.

Next, we must show that all tautological classes are polynomials in Casnati--Ekedahl classes. 
We have a diagram
\begin{equation} \label{unif}
\begin{tikzcd}
\C \arrow{dr}[swap]{\alpha} \arrow[ddr, bend right,swap, "f"] \arrow{r}[hookrightarrow]{\iota} & \pp \E^\vee \arrow{d}{\gamma} \\
& \P \arrow{d}{\pi} \\
& \H_{k,g}
\end{tikzcd}
\end{equation}
First, note that
\[f_*(c_1(\omega_f)^i \cdot \alpha^*(\omega_\pi)^j) = \pi_*(\alpha_*(c_1(\omega_\alpha) + \alpha^*c_1(\omega_\pi))^i \cdot c_1(\omega_\pi)^j), \]
so using push-pull, it will suffice to show that
$\pi_*(\alpha_*(c_1(\omega_\alpha)^i) \cdot z^j)$ is a polynomial in CE classes for all pairs $i, j$.
Now, let $\zeta := c_1(\O_{\pp \E^\vee}(1))$ and note that $\iota^*\zeta = c_1(\omega_\alpha)$. We have
\begin{align*}
\alpha_*(c_1(\omega_\alpha)^i) = \gamma_*\iota_*(\iota^* \zeta^i) = \gamma_*([\C] \cdot \zeta^i).
\end{align*}
Grothendieck--Riemann--Roch for $\iota: \C \hookrightarrow \pp \E^\vee$ tells us that $[\C] = \ch_{k-2}(\iota_* \O_\C)$.
By additivity of Chern characters in exact sequences,
the later is a polynomial in $\zeta$ and the Chern classes of $\F_i$.
Using the projective bundle theorem \eqref{pbt}, $\gamma_*([\mathcal{C}] \cdot \zeta^i)$ is therefore a polynomial in
the Chern classes of $\E$ and the $\F_i$. 
The $\pi$ push forward of such a polynomial times any power of $z$ is a polynomial in the CE classes (essentially from the definition of the CE classes).
\end{proof}

Using the idea in the proof above, we explain how to rewrite the $\kappa$-classes in terms of CE classes. 
\begin{example}[$\kappa$-classes] \label{kex}
Let us retain notation as in \eqref{unif}.
Writing $\zeta$ for the hyperplane class of $\p\E^{\vee}$ and $z$ for the hyperplane class on $\P$, we have
\[
c_1(\omega_f)= c_1(\omega_\alpha) + c_1(\omega_\pi) = \iota^*(\zeta-2z).
\]
By the push-pull formula, we have
\begin{equation} \label{kap}
\kappa_i=f_*(c_1(\omega_f)^{i+1})=\pi_*\gamma_*\iota_*(\iota^*(\zeta-2z)^{i+1})=\pi_*\gamma_*([\C]\cdot (\zeta-2z)^{i+1}).
\end{equation}
Meanwhile, the fundamental class of $\C \subset \pp \E^\vee$ is
\begin{align} \label{classC}
[\C] &= \sum_{i=1}^{k-3} (-1)^{i-1}\ch_{k-2}(\F_i(-i-1)) + (-1)^{k-2} \ch_{k-2}(\F_{k-2}(-k)) \\
&= \notag \begin{cases} -\ch_1(\det \E(-3)) = c_1(\det \E^\vee(3))  &\text{if $k = 3$} \\
-\ch_2(\F(-2)) + \ch_2(\det \E (-4))  = c_2(\F^\vee(2)) &\text{if $k = 4$} \\
-\ch_3(\F(-2)) + \ch_3((\F^\vee \otimes \det \E)(-3)) - \ch_3(\det \E(-5))
&\text{if $k = 5$.} \end{cases} 
\end{align}
Using \eqref{kap} and \eqref{classC}, it is straightforward to compute $\kappa_i$ in terms of the CE classes using a computer.
\end{example}

In degree $3$, the CE classes are $c_2, a_1, a_2, a_2'$. In degrees $k = 4, 5$, self-duality of the Casnati--Ekedahl resolution implies that all CE classes are expressible in terms of $c_2$ the $a_i, a_i'$ and  the $b_i := f_{1, i}$ and $b_i' := f_{1, i}'$. 
These classes are all pulled back from a moduli space $\B_{k,g}$ of (pairs of) vector bundles on $\pp^1$, which we shall construct in the next section.

\section{Pairs of vector bundles on $\pp^1$-bundles} \label{Vsec}
By the results of Casnati--Ekedahl and Casnati in the previous section, there is a correspondence between covers of $\p^1$ and certain linear algebraic data. In this section, following ideas of Bolognesi--Vistoli \cite{BV}, we construct moduli stacks parametrizing the associated linear algebraic data and describe the Chow rings of these stacks.
In \cite{BV}, Bolognesi--Vistoli gave a quotient stack presentation for the moduli stack parametrizing globally generated vector bundles on $\pp^1$-fibrations. As explained in Section \ref{hssec},
we will instead make use  of $\SL_2$ quotients, since they have the same rational Chow ring as the $\PGL_2$ quotient.
%All constructions that follow in later sections can be made over $\BPGL_2$ (represented in script font), but for convenience we will mostly work with the base change to $\BSL_2$ (in calligraphic font) so that the universal $\pp^1$-fibration is replaced with a universal $\pp^1$-bundle, which is the projectivization of vector bundle.
\begin{definition}\label{Stacks}
Let $r,d$ be nonnegative integers. 
\begin{enumerate}
    \item The objects of $\Ub_{r,d}^\dagger$ are pairs $(S, E)$ where $E$ is a locally free sheaf of rank $r$ on $\pp^1 \times S$ whose restriction to each of the fibers of $\pp^1 \times S \rightarrow S$ is globally generated of degree $d$. A morphism between objects $(S, E)$ and $(S', E')$ is a Cartesian diagram
    \[
    \begin{tikzcd}
\pp^1 \times S' \arrow[d] \arrow[r, "F"] & \pp^1 \times S \arrow[d] \\
S' \arrow[r]                & S          
\end{tikzcd}
    \]
    together with an isomorphism $\phi:F^*E\rightarrow E'$.
\item We define $\Ub_{r,d}$ to be the $\SL_2$ quotient of $\Ub_{r,d}^\dagger$. Explicitly, the objects of $\Ub_{r,d}$ are triples $(S, V, E)$ where $S$ is a $k$-scheme, $V$ is a rank $2$ vector bundle on $S$ with trivial determinant, and $E$ is a rank $r$ vector bundle on $\pp V$ whose restrictions to the fibers of $\pp V \rightarrow S$  are globally generated of degree $d$. A morphism between objects $(S,V,E)$ and $(S',V',E')$ is a Cartesian diagram
     \[
    \begin{tikzcd}
\pp V' \arrow[d] \arrow[r, "F"] & \pp V \arrow[d] \\
S' \arrow[r]                & S          
\end{tikzcd}
    \]
    together with an isomorphism $\phi:F^*E\rightarrow E'$.
\end{enumerate}
\end{definition}
\noindent

\par Bolognesi--Vistoli gave a presentation for $\Ub_{r,d}^\dagger$ as a quotient stack, which we briefly summarize here. Let $M_{r,d}$ be the affine space that represents the functor which sends a scheme $S$ to the set of matrices of size $(r+d)\times d$ with entries in $H^0(\p^1_S,\mathcal{O}_{\pp^1_S}(1))$. We can identify such a matrix with the associated map
\[
\mathcal{O}_{\p^1_S}(-1)^{d}\rightarrow \mathcal{O}_{\p^1_S}^{r+d}.
\]
Let $\Omega_{r,d} \subset M_{r,d}$ denote the open subscheme parametrizing injective maps with locally free cokernel. The group $\GL_d$ acts $M_{r,d}$ by multiplication on the left, $\GL_{r+d}$ by multiplication on the right. Bolognesi--Vistoli establish \cite[Theorem 4.4]{BV} that 
\[\Ub_{r,d}^\dagger \cong [\Omega_{r,d}/\GL_d \times \GL_{d+r}].\]
The group $\SL_2$ acts by change of coordinates on $H^0(\pp^1_S, \O_{\pp^1_S}(1))$. This commutes with the $\GL_d \times \GL_{d+r}$ action. Thus we obtain the following.

\begin{prop} \label{Vprime}
There is an isomorphism of fibered categories
\[
\Ub_{r,d}\cong [\Ub_{r,d}^\dagger/\SL_2] \cong [\Omega_{r,d}/\GL_{d}\times \GL_{r+d}\times \SL_2].
\]
\end{prop}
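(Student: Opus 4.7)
The plan is to establish the two isomorphisms separately. The first is essentially unwinding the definition of $\Ub_{r,d}$ in light of the standard dictionary between rank $2$ bundles with trivial determinant and $\SL_2$-torsors. The second is an application of Bolognesi--Vistoli's theorem combined with the observation that the $\SL_2$-action commutes with the $\GL_d \times \GL_{r+d}$-action.

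For the first isomorphism $\Ub_{r,d} \cong [\Ub_{r,d}^\dagger/\SL_2]$, recall that giving a rank $2$ vector bundle $V$ on $S$ with trivial determinant is equivalent to giving an $\SL_2$-torsor $P_V \to S$ (the frame bundle), and under this equivalence $\pp V \cong P_V \times^{\SL_2} \pp^1$. An object of the $\SL_2$-quotient $[\Ub_{r,d}^\dagger/\SL_2]$ over $S$ is an $\SL_2$-torsor $P \to S$ equipped with an $\SL_2$-equivariant map $P \to \Ub_{r,d}^\dagger$, which by the $2$-Yoneda lemma is the same as an $\SL_2$-equivariant rank $r$ bundle on $\pp^1 \times P$ that is fiberwise globally generated of degree $d$. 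By descent, such a bundle is the pullback of a unique bundle $E$ on $\pp V$ satisfying the same fiberwise hypotheses. Thus the assignment $(S,V,E) \mapsto (P_V, \text{pullback of } E)$ gives a functor to $[\Ub_{r,d}^\dagger/\SL_2]$, with inverse given by descent. I would then check that the Cartesian diagrams and isomorphisms $\phi$ in the morphism data match up on both sides, which is a routine diagram chase.

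For the second isomorphism, by \cite[Theorem 4.4]{BV} we have $\Ub_{r,d}^\dagger \cong [\Omega_{r,d}/\GL_d \times \GL_{r+d}]$. Taking $\SL_2$-quotients of both sides yields
\[
[\Ub_{r,d}^\dagger/\SL_2] \;\cong\; \bigl[[\Omega_{r,d}/\GL_d \times \GL_{r+d}]\big/\SL_2\bigr].
\]
The $\SL_2$-action on $\Omega_{r,d}$ is by the induced change-of-coordinates action on the entries, which live in $H^0(\pp^1_S, \O(1))$, while $\GL_d \times \GL_{r+d}$ acts on matrices by left and right multiplication. These two actions commute, so the iterated quotient collapses to a single quotient by the product group
\[
\bigl[\Omega_{r,d}/\GL_d \times \GL_{r+d} \times \SL_2\bigr],
\]
giving the desired presentation.

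The main obstacle I anticipate is the bookkeeping in the first step: verifying that isomorphisms in $\Ub_{r,d}$, which are specified by Cartesian diagrams over $\SL_2$-equivariant data together with an isomorphism $\phi$ of pulled-back bundles, correspond bijectively to morphisms in $[\Ub_{r,d}^\dagger/\SL_2]$. The content is formal once one commits to the torsor reformulation, but one needs to be careful with the $2$-categorical coherence of the identifications. The second step is then essentially a clean application of the fact that quotients commute up to isomorphism when the group actions commute.
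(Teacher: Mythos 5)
Your proposal is correct and follows essentially the same route as the paper: the first identification is just unwinding the definition of $\Ub_{r,d}$ as the $\SL_2$-quotient via the torsor/trivial-determinant dictionary, and the second is Bolognesi--Vistoli's presentation $\Ub_{r,d}^\dagger \cong [\Omega_{r,d}/\GL_d \times \GL_{r+d}]$ combined with the observation that the $\SL_2$ change-of-coordinates action on $H^0(\pp^1_S,\O(1))$ commutes with the $\GL_d\times\GL_{r+d}$ action, so the iterated quotient is the quotient by the product. The paper records exactly these two points (with the first essentially built into Definition 4.1) and leaves the descent bookkeeping implicit, just as you do.
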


\begin{rem}
Bolognesi--Vistoli also describe the $\PGL_2$ quotient of $\Ub_{r,d}^\dagger$, which is slightly more subtle. This distinction is important in their work which concerns \emph{integral} coefficients.
\end{rem}

To parametrize the linear algebraic data associated to a low degree cover of $\pp^1$, we are interested in
products of the form $\Ub_{r,d} \times_{\BSL_2} \Ub_{s,e}$, which parametrize a pair of vector bundles on the same $\pp^1$-bundle. Let $G_{r,d,s,e} := \GL_d \times \GL_{r+d} \times \GL_e \times \GL_{s+e}$.
The group $G_{r,d,s,e} \times \SL_2$ acts on $M_{r,d}$ via the projection $G_{r,d,s,e} \times \SL_2 \to \GL_d \times \GL_{r+d} \times \SL_2$; and similarly on $M_{s,e}$ via the projection $G_{r,d,s,e} \times \SL_2 \to \GL_e \times \GL_{s+e} \times \SL_2$.
By Proposition \ref{Vprime}, it follows that
\begin{equation} \label{asq}
\Ub_{r,d} \times_{\BSL_2} \Ub_{s,e} = [\Omega_{r,d} \times \Omega_{s,e}/G_{r,d,s,e} \times \SL_2].
\end{equation}

Let $T_d$ and $T_{r+d}$ denote the universal vector bundles on $\BGL_d$ and $\BGL_{r+d}$; similarly, let $S_e$ and $S_{s+e}$ be the universal vector bundles on $\BGL_{e}$ and $\BGL_{s+e}$. 
The Chow ring of $\mathrm{B}(G_{r,d,s,e} \times \SL_2)$ is the free $\qq$-algebra on the Chern classes of $T_d, T_{r+d}, S_e, S_{s+e}$, together with the universal second Chern class $c_2$ on $\BSL_2$.
Let us denote these classes by
\begin{align*}
t_i &= c_i(T_d) \qquad \text{and} \qquad u_i = c_i(T_{r+d}) \\
v_i &= c_i(S_e) \qquad \text{and} \qquad  w_i = c_i(S_{s+e}).
\end{align*}
Since $\Omega_{r,d} \times \Omega_{s,e}$ is open inside the affine space $M_{r,d} \times M_{s,e}$, 
the excision and homotopy properties imply 
\begin{equation} \label{gens}
\text{$A^*(\Ub_{r,d} \times_{\BSL_2} \Ub_{s,e})$ is generated by the restrictions of the $t_i, u_i, v_i, w_i$.}
\end{equation}

We now identify the restrictions of the tautological bundles $T_d$ and $T_{d+r}$ in terms of the universal rank $r$, degree $d$ vector bundle on $\pp^1$.
Let $\pi: \mathcal{P} \rightarrow \Ub_{r,d}$ be the universal $\pp^1$-bundle. We write $z := c_1(\O_{\mathcal{P}}(1)) \in A^1(\mathcal{P})$.
We have $c_2 = c_2(\pi_* \O_{\mathcal{P}}(1)) \in A^2(\Ub_{r,d})$, the universal second Chern class (pulled back via the natural map $\Ub_{r,d} \rightarrow \BSL_2$).
Note that $c_1(\pi_* \O_{\mathcal{P}}(1)) = 0$, so by Equation \eqref{pbt},
\[A^*(\mathcal{P}) = A^*(\Ub_{r,d})[z]/(z^2 + \pi^*c_2).\]
Let $\E$ be the universal rank $r$, degree $d$ vector bundle on $\P$.
 The Chern classes of $\E$ may thus be written as
\[c_i(\E) = \pi^* a_i + (\pi^* a_i') z \qquad \text{where} \qquad a_i \in A^i(\Ub_{r,d}), \quad a_i' \in A^{i-1}(\Ub_{r,d}).\]
Note that $a_1' = d$. Let $\gamma: \Ub_{r,d} \times_{\BSL_2} \Ub_{s,e} \to B(G_{r,d,s,e} \times \SL_2)$ be the structure map. Then by \cite[Lemma 3.2]{Brd} (noting that $\det(\pi_* \O_{\P}(1))$ is trivial), we have 
\begin{equation} \label{idT}
\gamma^* T_d = \pi_* \E(-1) \qquad \text{and} \qquad \gamma^*T_{r+d} = \pi_* \E.
\end{equation}
Since $R^1\pi_*\E(-1)$ and $R^1\pi_*\E$ are zero, Grothendieck--Riemann--Roch says that
the Chern characters of $\pi_*\E(-1)$ and $\pi_*\E$ are push forwards by $\pi$ of polynomials in the $c_i(\E)$ and $z$. The push forward of such a polynomial is a polynomial in the $a_i, a_i'$ and $c_2$. In particular, the restrictions of $t_i$ and $u_i$ to $A^*(\Ub_{r,d})$ are polynomials in $a_1, \ldots, a_r, a_2', \ldots, a_r'$ and $c_2$. %(See Example \ref{exc1}.)

\subsection{The rational Chow ring}
Let us denote the universal rank $s$ vector bundle from the second factor of $\Ub_{r,d} \times_{\BSL_2} \Ub_{s,e}$ by $\F$ on $\P$ and its Chern classes by
\[c_i(\F) = \pi^* b_i + (\pi^* b_i') z \qquad \text{where} \qquad b_i \in A^i(\Ub_{s,e}), \quad b_i' \in A^{i-1}(\Ub_{s,e}).\]
It follows from Equation \eqref{gens} and the discussion following (applied to both factors of the product) that the $a_i, a_i', b_i, b_i'$ and $c_2$ are generators for $A^*(\Ub_{r,d} \times_{\BSL_2} \Ub_{s,d})$. We now show that there are no relations among these generators in low degrees.
This is a generalization of \cite[Theorem 4.1]{Brd}, which shows $A^*(\Ub_{r,d})$ is generated by the $a_i, a_i',$ and $c_2$ with no relations in degrees less than $d+1$.

\begin{thm} \label{Vchow}
The rational Chow ring of $\Ub_{r,d}\times_{\BSL_2} \Ub_{s,e}$ is generated as a $\qq$-algebra by
\[c_2, a_1, \ldots, a_r, a_2', \ldots, a_r', b_1, \ldots, b_s, b_2', \ldots, b_s',\]
and all relations have degree at least $\min(d, e)+1$. In the notation of Equation \eqref{barnot},
\begin{align*}
&\trun^{\min(d,e)+1} A^*(\Ub_{r,d} \times_{\BSL_2} \Ub_{s,e}) \\
& \qquad \qquad = \trun^{\min(d,e)+1}\qq[c_2, a_1, \ldots, a_r, a_2', \ldots, a_r',b_1, \ldots, b_s, b_2', \ldots, b_s'].
\end{align*}
%Moreover, if $d, e > 0$, then $A^1(\Ub_{r,d}\times_{\BSL_2} \Ub_{s,e}) \cong \zz a_1 \oplus \zz a_2' \oplus \zz b_1 \oplus \zz b_2'$ integrally.
\end{thm}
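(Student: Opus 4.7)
The plan is to combine equivariant excision on the quotient stack presentation \eqref{asq} with the Grothendieck--Riemann--Roch change of variables already described above the theorem, and then to reduce the bound on the degree of relations to the single-factor case \cite[Theorem 4.1]{Brd}. Since $\Ub_{r,d} \times_{\BSL_2} \Ub_{s,e}$ is cut out of the ambient affine quotient $[(M_{r,d} \times M_{s,e})/(G_{r,d,s,e} \times \SL_2)]$ by the open $\Omega_{r,d} \times \Omega_{s,e}$, equivariant excision and the homotopy property give a surjection
\[
\qq[t_i, u_i, v_i, w_i, c_2] \cong A^*_{G_{r,d,s,e} \times \SL_2}(M_{r,d} \times M_{s,e}) \twoheadrightarrow A^*(\Ub_{r,d} \times_{\BSL_2} \Ub_{s,e})
\]
whose kernel is the image of the equivariant pushforward from the complement.

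For the generation statement, the formulas \eqref{idT} and GRR express each $t_i, u_i$ as a polynomial in $a_j, a_j', c_2$, and symmetrically each $v_i, w_i$ as a polynomial in $b_j, b_j', c_2$. Composing with the surjection above yields
\[
\Psi \colon \qq[c_2, a_1, \ldots, a_r, a_2', \ldots, a_r', b_1, \ldots, b_s, b_2', \ldots, b_s'] \twoheadrightarrow A^*(\Ub_{r,d} \times_{\BSL_2} \Ub_{s,e}).
\]
For the relation bound, decompose the complement of $\Omega_{r,d} \times \Omega_{s,e}$ in $M_{r,d} \times M_{s,e}$ as $((M_{r,d} \setminus \Omega_{r,d}) \times M_{s,e}) \cup (M_{r,d} \times (M_{s,e} \setminus \Omega_{s,e}))$. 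The pushforward from the first piece contributes only relations of degree at least $d+1$ once the change of variables to the $a_j, a_j'$ generators is applied, by \cite[Theorem 4.1]{Brd} applied to the left factor; symmetrically, the second piece contributes only in degrees at least $e+1$. Thus $\ker(\Psi)$ is concentrated in degrees at least $\min(d, e)+1$, yielding the claimed truncation identity.

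The main obstacle is the subtle fact that $M_{r,d} \setminus \Omega_{r,d}$ can have codimension much smaller than $d+1$ in $M_{r,d}$ (in simple cases it is only codimension $1$), so the relation bound cannot be read off from a naive codimension count. The genuine content of \cite[Theorem 4.1]{Brd} is that the low-degree components of the pushforward from this complement become tautological once $t_i, u_i$ are rewritten via GRR as polynomials in the $a_j, a_j', c_2$; equivalently, $A^*(\Ub_{r,d}) \cong \qq[c_2, a_1, \ldots, a_r, a_2', \ldots, a_r']/R$ with $R$ supported in degree at least $d+1$. To propagate this from the single factors to the fiber product, one either invokes a Kunneth-style identification $A^*(\Ub_{r,d} \times_{\BSL_2} \Ub_{s,e}) \cong A^*(\Ub_{r,d}) \otimes_{\qq[c_2]} A^*(\Ub_{s,e})$ in the relevant degree range, or checks directly that the equivariant pushforwards from the two pieces of the complement above are independent of one another and combine only in degrees at least $\min(d,e)+1$.
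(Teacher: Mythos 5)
Your proposal follows essentially the same route as the paper: excision from the affine equivariant model $[(M_{r,d}\times M_{s,e})/(G_{r,d,s,e}\times \SL_2)]$, the Grothendieck--Riemann--Roch change of variables from the $t_i,u_i,v_i,w_i$ to the $a_i,a_i',b_i,b_i',c_2$, and a reduction of the relation bound to \cite[Theorem 4.1]{Brd} applied to each factor of the complement separately. You also correctly identify the genuine difficulty, namely that the complement has codimension only $\min(r,s)$, so no naive dimension count suffices.

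One caution on your final step. Of the two finishing moves you offer, the K\"unneth-style identification $A^*(\Ub_{r,d} \times_{\BSL_2} \Ub_{s,e}) \cong A^*(\Ub_{r,d}) \otimes_{\qq[c_2]} A^*(\Ub_{s,e})$ should not be invoked as a black box: Chow rings do not satisfy K\"unneth in general, and in this setting that isomorphism (in the relevant degree range) is essentially equivalent to the statement being proved. The ``direct check'' is the route that actually works, and it does require an argument: one observes that $X = [\Omega_{r,d}^c \times M_{s,e}/(G_{r,d,s,e}\times\SL_2)]$ is a vector bundle (with fiber $M_{s,e}$) over the complement appearing in the single-factor proof, crossed with the extra classifying-space factors, so its Chow ring is the polynomial ring in $v_i, w_i$ over the single-factor answer and its pushforward lands in the ideal generated by relations $f_{i,j}$ involving only $t_i, u_i, c_2$; symmetrically for $Y$. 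This yields an explicit presentation in which the relations from $X$ serve only to eliminate the redundant generators $t_n$ ($n\geq r$) and $u_m$ ($m \geq r+1$) without imposing anything new in degrees $\leq d$, and likewise for $Y$ in degrees $\leq e$. With that step made explicit, your argument coincides with the paper's.
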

\begin{rem} (1) The codimension of the complement of $\Omega_{r,d} \subset M_{r,d}$ is $r$, so the Theorem does \emph{not} follow immediately from dimension counting and excision if $\min(d, e) > \min(r, s)$.

(2) If $s = 0$, the fact that there are no $b_i$ classes follows immediately from \cite[Theorem 4.1]{Brd}.
\end{rem}

\begin{proof}
Let 
\[M := [M_{r,d}/G_{r,d,s,e} \times \SL_2] \quad \text{and} \quad N := [M_{s,e}/G_{r,d,s,e} \times \SL_2].\]
Equation \eqref{asq} says that $\Ub_{r,d} \times_{\BSL_2} \Ub_{s,e}$ is an open inside the vector bundle $M \oplus N$ over $B := \mathrm{B}(G_{r,d,s,e} \times \SL_2)$.
The complement consists of two components, namely
\[X := [\Omega_{r,d}^c \times M_{s,e} /G_{r,d,s,e} \times \SL_2] \quad \text{and}
\quad 
Y :=  [M_{r,d} \times \Omega_{s,e}^c/G_{r,d,s,e} \times \SL_2].
\]
%Recall that $\Omega_{r,d}^c$ is the space of matrices of linear forms that drop rank along some point on $\pp^1$. 
One readily checks that $X \subset \Ub_{r,d} \times_{\BSL_2} \Ub_{s,e}$ is irreducible of codimension $r$ and $Y \subset \Ub_{r,d} \times_{\BSL_2} \Ub_{s,e}$ is irreducible of codimension $s$ (see \cite[Remark 4.3]{BV}).
Excision gives a right-exact sequence
\begin{equation} \label{removeXY}
A^{*-r}(X) \oplus A^{*-s}(Y) \rightarrow A^*(M \oplus N)  \rightarrow A^*(\Ub_{r,d} \times_{\BSL_2} \Ub_{s,e}) \rightarrow 0.
\end{equation}

From this it is clear that there are no relations among the restrictions to $\Ub_{r,d} \times_{\BSL_2} \Ub_{s,e}$ of the Chern classes of $T_d, T_{d+r}, S_e$ and $S_{s+e}$ in degrees less than $\min(r, s)$.
We now describe relations among the restrictions of these Chern classes in degrees $\min(r, s)$ up to $\min(d, e)$. (If $\min(d, e) < \min(r, s)$ we are already done.)
In particular, shall conclude that
\begin{align} \label{alt} &\trun^{\min(d,e)+1}A^*(\Ub_{r,d} \times_{\BSL_2} \Ub_{s,e}) \\
&\qquad = \trun^{\min(d,e)+1}\qq[c_2, t_1, \ldots, t_{r-1}, u_1, \ldots, u_r, v_1, \ldots v_{s-1}, 
w_1, \ldots, w_s]. \notag
\end{align}
Since the classes in the statement of the theorem are generators and have the same degrees as those above, the statement in the theorem must hold for dimension reasons.

It suffices to understand the image of $A^{*-r}(X) \to A^*(M \oplus N)$, the other factor being similar.
For this we resolve $X$ (resp. $Y$) as in the proof of \cite[Theorem 4.1]{Brd}, taking the $N$ factor (resp. $M$ factor) ``along for the ride".

Using the excision sequence \eqref{removeXY} and arguing exactly as in \cite[Theorem 4.1]{Brd}, we have
{\small \begin{equation} \label{quotient}
A^*(\Ub_{r,d} \times_{\BSL_2} \Ub_{s,e}) = \frac{\qq[c_2, t_1, \ldots, t_d, u_1, \ldots, u_{r+d}, v_1, \ldots, v_e, w_1, \ldots, w_{s+e}]}{\langle f_{i, j} : 0 \leq i \leq 1, 0 \leq j \leq d-1\rangle + \langle g_{i, j} : 0 \leq i \leq 1, 0 \leq j \leq e-1\rangle}.
\end{equation}}
where
\begin{align*}
f_{1, j-1} &= - t_{j + r} + u_{j + r} + \ldots & f_{0, j} &= -(r+d)t_{j+r} + (d - j)u_{j+r} + \ldots \\
g_{1, j-1} &=- v_{j + s} + w_{j + s} + \ldots & g_{0, j} &= -(s+e)v_{j+s} + (e - j)w_{j+s} + \ldots.
\end{align*}
Hence, in $A^*(\Ub_{r,d} \times_{\BSL_2}\Ub_{s,e})$,
the classes $t_{n}$ for  $r \leq n \leq d$ and $u_{m}$ for $r+1 \leq m \leq d$ are expressible as polynomials in $c_2, t_1, \ldots, t_{r-1}, u_1, \ldots, u_r$. Moreover, after eliminating these higher degree generators, the $f_{i,j}$ produce no additional relations in degrees less than or equal to $d$ among the restrictions to $\Ub_{r,d} \times_{\BSL_2} \Ub_{s,e}$ of $c_2, t_1, \ldots, t_{r-1}, u_1, \ldots, u_r, v_1, \ldots, v_{e}, w_1, \ldots, w_{s+e}$. With the analogous
calculation for the $g_{i,j}$, equation \eqref{quotient} then implies \eqref{alt}, and hence the statement of the theorem.
%To see the claim regarding the integral Picard group, note that if $d > 0$, then all relations in $J$ have degree greater than $1$. Hence, the integral Picard group is freely generated by $c_1(T_d), c_1(T_{r+d}), c_1(S_d),$ and $c_1(S_{r+d})$.
%Using Grothendieck--Riemann--Roch, we see
%\begin{align*}
%c_1(T_d) &= [\pi_*(\ch(\E)\ch(\O_{\P}(-1))\mathrm{Td}_{\pi})]_1 = [\pi_*(\ch(\E)(1 - z)(1 + z))]_1\\
%&= [\pi_*(\ch_2(\E))]_1 = da_1 - a_2' \\
%c_1(T_{r+d}) &= [\pi_*(\ch(\E)\mathrm{Td}_{\pi})]_1 = [\pi_*(\ch(\E)(1 + z))]_1 \\
%&= [\pi_*(\ch_2(\E))]_1 + [\pi_* \ch_1(\E)z]_1= da_1 - a_2'+ a_1.
%\end{align*}
%Similarly,
%\[c_1(S_d) = db_1 - b_2'\qquad \text{and} \qquad c_1(S_{r+d}) = (d+1)b_1 - b_2' \]
%so we see that $a_1, a_2', b_1$ and $b_2'$ are also integral generators for the Picard group.
%Because of the exact sequence
%\[0 \rightarrow \pi^*T_d \otimes \O_{\P}(-1) \rightarrow \pi^* T_{r+d} \rightarrow \E \rightarrow 0\]
%we have
%\begin{align*}
%1 + c_1(\E)  + c_2(\E) + \ldots &= \frac{c(\pi^*T_{r+d})}{c(\pi^*T_d \otimes \O_{\P}(-1)) } \\
%&= 1 + \pi^*c_1(T_{r+d}) - (\pi^*c_1(T_d) - dz) + \pi^*c_2(T_{r+d}) \\
%&\quad - 2c_1(T_{r+d})(\pi^*c_1(T_d) - dz) + (\pi^*c_1(T_d) - dz)^2
%\end{align*}
\end{proof}

\subsection{Splitting loci} \label{slsec}
Every vector bundle $E$ on $\pp^1$ splits as a direct sum of line bundles, $E \cong \O(e_1) \oplus \cdots \oplus \O(e_r)$ for integers $e_1 \leq \cdots \leq e_r$. We call the non-decreasing sequence of integers $\vec{e} = (e_1, \ldots, e_r)$ the \emph{splitting type} of $E$ and will often abbreviate the corresponding sum of line bundles by $\O(\vec{e}) := \O(e_1) \oplus \cdots \oplus \O(e_r)$.
Given a family of vector bundles $\E$ on a $\pp^1$-bundle $\pi: \P \to B$, the base $B$ is stratified by locally closed subvarieties
\[\{b \in B : \E_{\pi^{-1}(b)} \cong \O(\vec{e})\},\]
which we call the \emph{splitting locus for $\vec{e}$}. A subscheme structure on splitting loci is defined in \cite[Section 2]{L}, though it will not be necessary here.

The splitting type $\vec{e}$ of $E$ is equivalent to the data of the ranks of cohomology groups $h^0(\pp^1, E(m))$ for all $m \in \zz$. Conversely, the locus of points $b \in B$ where the fibers of $\E$ satisfy some cohomological condition is a union of splitting loci.
For example,
the locus in $B$ where $\E$ fails to be globally generated on fibers is the union of splitting loci for splitting types $\vec{e}$ with $e_1 \leq -1$. %This union of splitting loci is in turn equal to $\Supp R^1\pi_* \E(-1)$. 
Similarly, $\Supp R^1\pi_* \E(-2)$ is the union of all splitting loci with $e_1 \leq 0$.

Following the argument in \cite[Lemma 5.1]{BV}, the codimension in $\Ub_{r,d}$ of the splitting locus where the universal $\E$ over $\Ub_{r,d}$ has splitting type $\vec{e}$ on fibers of $\P \to \Ub_{r,d}$ is $h^1(\pp^1, \E nd(\O(\vec{e})))$.
If we have a $\pp^1$-bundle equipped with two vector bundles, we can consider the intersections of splitting loci for both bundles.
The simultaneous splitting locus in $\Ub_{r,d} \times_{\BSL_2} \Ub_{s,e}$ where $\E$ has splitting type $\vec{e}$ and $\F$ has splitting type $\vec{f}$ is equal to the product of the $\vec{e}$ splitting locus in $\Ub_{r,d}$ with the $\vec{f}$ splitting locus in $\Ub_{s,e}$, and therefore
has codimension 
\begin{equation} \label{simsp}
h^1(\pp^1, \E nd(\O(\vec{e}))) + h^1(\pp^1, \E nd(\O(\vec{f}))).
\end{equation}

\section{The good opens and codimension bounds}
For each $k = 3, 4, 5$ and genus $g$, we will define a stack $\B_{k,g}$ parametrizing the vector bundles associated to a degree $k$, genus $g$ cover of $\pp^1$. The stack $\B_{k,g}$ will come equipped with a universal $\pp^1$-bundle $\pi: \P \to \B_{k,g}$. Then, we will define a vector bundle $\aV_{k,g}$ on $\P$ whose sections on a fiber of $\P \to \B_{k,g}$ is the relevant space of sections in the linear algebraic data of covers appearing in Section \ref{CEsec}. 
%Many constructions in this section can be made over $\BPGL_2$ (using script letters) or over $\BSL_2$ (using caligraphic letters). We do not write out both, but it should be understood that a script letter means to make the same construction but replacing $\Ub_{r,d}$ with $\V_{r,d}$ in the initial step.

Before treating the case for each $k$ in depth, we briefly outline our construction of certain open substacks of the Hurwitz stack. For $k = 3$, we shall define $\B_{3,g}' \subseteq \B_{3,g}$ to be the open substack over which $\aV_{3,g}$ is globally generated on the fibers of $\pi: \P \to \B$.
%\hannah{I think we can get away with never saying $\Supp(R^1 \pi_*(\U_{3,g} \otimes \O_{\P}(-1)))$ and maybe that's better because people know what global generation means but might not be so quick to make the translation. The ``canonical" way to cut out the non (relatively) globally generated locus of a family is as the locus where $\pi^*\pi_* E \to E$ fails to be surjective, so I guess, i.e. the scheme theoretic image of the support of the cokernel of that map.}
For $k = 4, 5$, let us define the open substack
\begin{align}
\B_{k,g}' &:= \B_{k,g} \smallsetminus \Supp(R^1\pi_*\aV_{k,g}) \label{Bprime}.
\end{align}
By the theorem on cohomology and base change, the restriction of $\pi_* \aV_{k,g}$ to $\B_{k,g}'$ is locally free with fibers given by the relevant space of sections in the linear algebraic data of covers appearing in Section \ref{CEsec}. We denote the total space of this vector bundle on $\B_{k,g}'$ by 
\[\aW_{k,g}' := \pi_* \aV_{k,g}|_{\B_{k,g}'}.\]

As discussed in Section \ref{slsec}, the complement of $\B_{k,g}'$ is a union of splitting loci. The splitting loci in the complement are determined by the condition that $\U_{k,g}$ has a summand of degree $-2$ or less.
It will also be convenient to work with a slightly smaller open substack
\begin{equation}\label{Bcirc}
\B_{k,g}^\circ := \B_{k,g} \smallsetminus \Supp(R^1\pi_*(\aV_{k,g} \otimes \O_{\P}(-2))), 
\end{equation}
whose complement consists of splitting loci where $\U_{k,g}$ has a summand of degree $0$ or less. (Having a cut-off in terms of a degree $0$ summand will be cleaner than a cut-off in terms of a $-2$ summand for our next step of bounding the codimension of the complement. The open $\B_{k,g}^\circ$ also plays an important role in our sequel \cite{part2}, where 
the slightly stronger positivity condition on the fibers of $\U_{k,g}$ will be needed.)

Pulling back these open substacks along the natural map $\H_{k, g} \to \B_{k,g}$ defines open substacks of the Hurwitz space as in the diagram below
\begin{equation}
\begin{tikzcd} \label{kdiag}
\H_{k,g}^\circ \arrow{d} \arrow{r} &\H_{k,g}' \arrow{d} \arrow{r} & \H_{k,g} \arrow{d} \\
\B_{k,g}^\circ \arrow{r} &\B_{k,g}' \arrow{r} & \B_{k,g}.
\end{tikzcd}
\end{equation}
In all cases, we shall see that $\H_{k,g}'$ is an open substack inside the vector bundle $\aW_{k,g}'$ over $\B_{k,g}'$. In particular, we will find that the Chow ring of $\H_{k,g}'$ is generated by tautological classes. To turn this into meaningful results for the Chow ring of $\H_{k,g}$ we must describe the complement of $\H_{k,g}' \subseteq \H_{k,g}$. 
When $k = 3$, it turns out $\H_{k,g}' = \H_{k,g}$. For $k = 4$, the complement of $\H_{4,g}'$ contains covers that factor through an intermediate curve of low genus. Nevertheless, we show that the complement of $\H_{k,g}'$ intersects $\H_{4,g}^{\mathrm{nf}}$ in high codimension. 
Finally, for $k = 5$, we show that the complement of $\H_{5,g}'$ has high codimension.

Of course, the complement of $\H_{k,g}'$ is contained in the complement of $\H_{k,g}^\circ$, so it will suffice to bound the codimension of the complement of $\H_{k,g}^\circ$ (which in turn provides a lower bound on the codimension of the complement of $\H_{k,g}'$). For arbitrary $g$, the coefficient of $g$ in the bounds we obtain will be sharp (and the same for $\H_{k,g}^\circ$ and $\H_{k,g}'$).
In any particular case, however, one may find slight improvements on our bounds by enumerating the splitting loci in the complement $\H_{k,g}'$, and calculating their codimensions.

Along the way, we also bound the codimension of the complement of $\B_{k,g}^\circ$. We point out some immediate corollaries regarding the Chow rings of $\B_{k,g}^\circ$, which will be used in our subsequent work \cite{part2}.

\subsection{Degree $3$}
As a warm-up, we first explain our set-up in degree $3$, as it is simplest. The results in this subsection are not new (they have already been established by Bolognesi--Vistoli \cite{BV} and Patel--Vakil \cite{PV2}) but spelling them out in our language will be instructive; it will also be useful in our subsequent work \cite{part2}.

In Section \ref{Vsec}, we gave a construction for $\Ub_{r,d}$ as the moduli space of vector bundles on $\pp^1$-bundles. 
As discussed in Section \ref{tripsec}, the linear algebraic data of a degree $3$, genus $g$ cover involves a rank $2$, degree $g+2$ vector bundle $E$ on $\pp^1$ and section of $\det E^\vee \otimes \Sym^3 E$. We set 
\[\B_{3,g} := \Ub_{2,g+2} \qquad \text{and} \qquad 
\aV_{3,g} := \det \E^\vee \otimes \Sym^3 \E,\]
where $\E$ is the universal rank $2$ bundle on $\pi: \P \to \Ub_{2,g+2}$.
%Note that by Theorem \ref{Vchow}, $c_2, a_1, a_2, a_2'$ generate $A^*(\B_{3,g})$ and we have
%\begin{equation} \label{b3chow}
%A^*(\B_{3,g})|_{g+3} = \qq[c_2, a_1, a_2, a_2']|_{g+3}.
%\end{equation}
There is a natural map $\H_{3,g} \to \B_{3,g}$ that sends a family of triple covers $C \xrightarrow{\alpha} P \rightarrow S$ in $\H_{3,g}(S)$ to the associated rank $2$ vector bundle $E_\alpha$ on $P \to S$ in $\B_{3,g}(S)$. If $C \xrightarrow{\alpha} \pp^1$ is an integral triple cover and
$E_\alpha = \O(e_1) \oplus \O(e_2)$ is the associated rank $2$ vector bundle on $\pp^1$, then by \cite[Proposition 2.2]{BV}, we have $e_1, e_2 \geq \frac{g+2}{3}$. Equivalently, every summand of $\det E_\alpha^\vee \otimes \Sym^3 E_\alpha$ is non-negative. Hence, the map $\H_{3,g} \to \B_{3,g}$ factors through the substack $\B_{3,g}' \subseteq \B_{3,g}$ over which $\aV_{3,g}$ is globally generated on fibers of $\P \to \B_{3,g}$. In particular, $\H_{3,g}' = \H_{3,g}$.
We define $\aW_{3,g}' := \pi_*\aV_{3,g}|_{\B_{3,g}'}$, which is a vector bundle on $\B_{3,g}'$ by the theorem on cohomology and base change.

\begin{lem} \label{hopen}
There is an open inclusion $\H_{3,g} = \H_{3,g}' \to \aW_{3,g}'$. %Similarly, there is an open inclusion $\Hp_{3,g} \to \Wp_{3,g}$ where $\Wp_{3,g}$ is a vector bundle over $\V_{2,g+2}$, defined analogously using script letters.
In particular, $A^*(\H_{3,g})$ is generated by the CE classes $c_2, a_1, a_2, a_2'$, and therefore $A^*(\H_{3,g}) = R^*(\H_{3,g})$. 
\end{lem}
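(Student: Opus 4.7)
The plan is to promote the equivalence $\mathrm{Trip}(S) \cong \mathrm{Trip}'(S)$ of Theorem \ref{CE3} to an open embedding of stacks $\H_{3,g} \hookrightarrow \aW_{3,g}'$ and then read off the Chow-theoretic statement from homotopy invariance plus excision. Given an $S$-point $(V, C \xrightarrow{\alpha} \pp V, C \to S)$ of $\H_{3,g}$, the Casnati--Ekedahl construction yields a rank $2$ vector bundle $E_\alpha$ on $\pp V$ of fiberwise degree $g+2$ (by Example \ref{p1ex}) and a section $\eta_\alpha \in H^0(\pp V, \Sym^3 E_\alpha \otimes \det E_\alpha^\vee)$ non-vanishing on each fiber of $\pp V \to S$. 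The Bolognesi--Vistoli positivity bound cited just before the lemma places $(V, E_\alpha)$ inside the open substack $\B_{3,g}' \subseteq \B_{3,g} = \Ub_{2, g+2}$ on which $\aV_{3,g}$ is fiberwise globally generated, so $\eta_\alpha$ gives a point of the vector bundle $\aW_{3,g}' = \pi_*\aV_{3,g}|_{\B_{3,g}'}$ lying over $(V, E_\alpha)$.

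Conversely, an $S$-point of $\aW_{3,g}'$ is a triple $(V, E, \eta)$ with $(V, E) \in \B_{3,g}'(S)$ and $\eta \in H^0(\pp V, \aV_{3,g}|_{\pp V})$, and applying the functor $\mathrm{Trip}'(S) \to \mathrm{Trip}(S)$ of Theorem \ref{CE3} produces a finite flat triple cover $V(\Phi(\eta)) \subset \pp E^\vee \to \pp V \to S$ as soon as $\eta$ is fiberwise non-vanishing on $\pp V \to S$. The fiberwise non-vanishing, together with the additional requirements that the composite cover be smooth over $S$ of genus $g$ with geometrically connected fibers, are all open conditions; they cut out an open substack of $\aW_{3,g}'$ which, by the equivalence of Theorem \ref{CE3} applied fiberwise and then descended along the $\SL_2$-quotient presentation, is identified with $\H_{3,g}$.

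The main subtlety will be checking that Theorem \ref{CE3} really induces an equivalence of fibered categories in the relative setting with $\pp V$ and $S$ varying: one must verify that morphisms of families correspond bijectively to compatible pairs of isomorphisms of vector bundles and sections, and that the correspondence commutes with base change along $S$ and with the $\SL_2$-action on $V$. Since every step in the construction of the Casnati--Ekedahl resolution and of its inverse is compatible with flat base change, this is a formal check rather than a new difficulty.

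For the Chow-theoretic conclusion, homotopy invariance gives $A^*(\aW_{3,g}') \cong A^*(\B_{3,g}')$, and excision yields a surjection $A^*(\B_{3,g}) = A^*(\Ub_{2, g+2}) \twoheadrightarrow A^*(\B_{3,g}')$. By the construction of generators for $A^*(\Ub_{r,d})$ recalled in Section \ref{Vsec}, $A^*(\Ub_{2, g+2})$ is generated by $c_2, a_1, a_2, a_2'$. Since $\H_{3,g} \hookrightarrow \aW_{3,g}'$ is open, excision again gives a surjection $A^*(\aW_{3,g}') \twoheadrightarrow A^*(\H_{3,g})$ sending these generators to the CE classes of the same names on $\H_{3,g}$. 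By Theorem \ref{CEgen} the CE classes are tautological, so $A^*(\H_{3,g}) = R^*(\H_{3,g})$.
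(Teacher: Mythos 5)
Your proposal is correct and follows essentially the same route as the paper: both identify $\H_{3,g}$ with the open substack of the vector bundle $\aW_{3,g}'$ where the section cuts out a family of smooth genus-$g$ triple covers (via Theorem \ref{CE3} and the Bolognesi--Vistoli positivity bound placing $E_\alpha$ in $\B_{3,g}'$), and then deduce the Chow statement from homotopy invariance and excision. The only cosmetic difference is that the paper notes the genus-$g$ condition is automatic from the Hilbert polynomial rather than listing it as a separate open condition.
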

\begin{proof}
The first sentence was essentially observed in \cite[p. 12]{BV}. We include an explanation using our notation.
Given a scheme $S$, the objects of $\aW_{3,g}(S)$ are tuples $(P \to S, E, \eta)$ where $(P \to S, E)$ is an object of $\B_{3,g}'(S)$ and $\eta \in H^0(P, \Sym^3 E \otimes \det E^\vee)$. We define an open substack $\mathcal{Y}_{3,g}' \subset \aW_{3,g}'$ by the condition that $V(\Phi(\eta)) \subset \pp E^\vee \to S$ is a family of smooth curves, where $\Phi$ is as in \eqref{3com}. Considering the Hilbert polynomial of $V(\Phi(\eta))$, one sees that the fibers have arithmetic genus $g$.
Theorem \ref{CE3} now shows that there is an equivalence $\H_{3,g}' \cong \mathcal{Y}_{3,g}'$.

By excision, the Chow ring of $\H_{3,g} = \H_{3,g}'$ is generated by restrictions of classes on $\aW_{3,g}'$. Since $\aW_{3,g}'$ is a vector bundle over $\B_{3,g}'$, their Chow rings are isomorphic, so the statement about generators follows from Theorem \ref{Vchow}.
\end{proof}

\subsection{Degree $4$} \label{co4}

By Casnati--Ekedahl's characterization of quadruple covers (Theorem \ref{CE4}), the linear algebraic data of a quadruple cover of $\pp^1$ is equivalent to the data of: a rank $3$ vector bundle $E$; a rank $2$ vector bundle $F$; an isomorphism $\det F \cong \det E$; and a global section of $F^\vee \otimes \Sym^2 E$ on $\p^1$ having the right codimension. By Example \ref{p1ex}, $\deg(E) = \deg(F) = g + 3$.
The stacks $\Ub_{2,g+3}$ and $\Ub_{3,g+3}$ both admit natural morphisms to $\BSL_2$, and the fiber product $\Ub_{3,g+3}\times_{\BSL_2} \Ub_{2,g+3}$ is the stack whose objects are quadruples $(S,V,E,F)$ where $S$ is a $k$-scheme, $V$ is a rank $2$-vector bundle with trivial determinant, $E$ is a rank $3$ vector bundle on $\p V$ whose restriction to the fibers of $\p V\rightarrow S$ is globally generated of degree $g+3$, and $F$ is a rank $2$ vector bundle on $\p V$ whose restriction to the fibers of $\p V\rightarrow S$ is globally generated of degree $g+3$.

The additional data of an isomorphism $\det F \cong \det E$ is captured by a $\mathbb{G}_m$ torsor over $\Ub_{3,g+3} \times_{\BSL_2} \Ub_{2,g+3}$ defined as follows.
Let $\E$ be the universal rank $3$ bundle and $\F$ be the universal rank $2$ bundle on the universal $\pp^1$-bundle $\pi: \mathcal{P} \rightarrow \Ub_{3,g+3}\times_{\BSL_2} \Ub_{2,g+3}$.
Since $\det \E^\vee \otimes \det \F$ has degree $0$ on each fiber of $\pi: \P \rightarrow \Ub_{3,g+3}\times_{\BSL_2} \Ub_{2,g+3}$, the theorem on cohomology and base change shows that
$\mathcal{L}:=\pi_*(\det \E^\vee \otimes \det \F)$
is a line bundle with $\pi^*\mathcal{L} \cong \det \E^\vee \otimes \det \F$.
\begin{definition}
With notation as above, define the stack $\B_{4,g}$ to be the $\mathbb{G}_m$-torsor over $\Ub_{3,g+3}\times_{\BSL_2} \Ub_{2,g+3}$  given by the complement of the zero section of the line bundle $\mathcal{L}$.
\end{definition}
The objects of $\B_{4,g}$ are tuples $(S, V, E, F, \phi)$ where $(S, V, E, F)$ is an object of $\Ub_{3,g+3} \times_{\BSL_2} \Ub_{2,g+3}$ and $\phi$ is an isomorphism $\det F \cong \det E$.
Recalling the notation of Section \ref{quadsec}, given an object $C \xrightarrow{\alpha} P \to S$ of $\H_{4,g}(S)$, the restriction of $E_\alpha$ and $F_\alpha$ to fibers of $P \to S$ are both known to be globally generated (see Proposition \ref{known}).
Hence,
there is a natural map $\H_{4,g} \to \B_{4,g}$ that sends the family $C \xrightarrow{\alpha} P \xrightarrow{\pi} S$ to the tuple $(S, \pi_*\O_{P}(1)^{\vee}, E_\alpha, F_\alpha, \phi_\alpha)$.

By slight abuse of notation, let us denote the pullback to $\B_{4,g}$ of the universal $\pp^1$-bundle by $\pi: \P \rightarrow \B_{4,g}$, and the universal rank $3$ and $2$ vector bundles on it by $\E$ and $\F$.
Let $z = \O_{\P}(1)$ and write
\[c_i(\E) = \pi^*a_i + (\pi^*a_i') z \qquad \text{and} \qquad c_i(\F) = \pi^*b_i + (\pi^*b_i') z. \]
for $a_i, b_i \in A^i(\B_{4,g})$ and $a_i', b_i' \in A^{i-1}(\B_{4,g})$.
Note that $a_1' = b_1' = g+3$.
Moreover, by definition of $\B_{4,g}$, we have $c_1(\det \E^\vee \otimes \det \F) = 0$, so $a_1 = b_1$.
Further, by Lemma \ref{Gmbundles}, we have
\begin{align} \label{b4eq}
A^*(\B_{4,g}) = A^*(\Ub_{3,g+3} \times_{\BSL_2} \Ub_{2,g+3})/\langle c_1(\mathcal{L})\rangle = A^*(\Ub_{3,g+3} \times_{\BSL_2} \Ub_{2,g+3})/\langle a_1 - b_1\rangle.
\end{align}
Thus, Theorem \ref{Vchow}
shows that $c_2, a_1, a_2, a_3, a_2', a_3', b_2', b_2$ generate $A^*(\B_{4,g})$ and
\begin{equation} \label{b4chow}
\trun^{g+4} A^*(\B_{4,g}) = \trun^{g+4} \qq[c_2, a_1, a_2, a_3, a_2', a_3', b_2', b_2].
\end{equation}

Next, we define $\aV_{4,g} := \F^\vee \otimes \Sym^2 \E$ on $\P$. We then define $\B_{4,g}'$ and $\B_{4,g}^\circ$ by \eqref{Bprime} and \eqref{Bcirc} respectively.
Correspondingly, the open substacks $\H_{4,g}^\circ \subseteq \H_{4,g}' \subseteq \H_{4,g}$ are described by
\begin{align*} 
\{S \rightarrow \H_{4,g}^\circ\} &= \{S \rightarrow \H_{4,g} : R^1(\pi_S)_* (\F_S^\vee \otimes \Sym^2 \E_S \otimes \O_{\P_S}(-2)) = 0\}  \\
\{S \rightarrow \H_{4,g}'\} &= \{S \rightarrow \H_{4,g} : R^1(\pi_S)_* (\F_S^\vee \otimes \Sym^2 \E_S) = 0\}.
\end{align*}
The key property of $\H_{4,g}'$ is that the map $\H_{4,g}' \to \B_{4,g}'$ factors through an open inclusion in the total space of a vector bundle $\aW_{4,g}' := \pi_*\aV_{4,g}|_{\B_{4,g}'}$.

\begin{lem} \label{Hprime4}
There is an open inclusion $\H_{4,g}' \to \aW_{4,g}'$. %, and similarly $\Hp'_{4,g} \to \Wp_{4,g}$.
In particular, $A^*(\H_{4,g}') = R^*(\H_{4,g}')$ is generated by the CE classes $c_2, a_1, a_2, a_3, a_2', a_3', b_2', b_2$. 
\end{lem}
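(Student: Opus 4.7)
The plan is to mimic the proof of Lemma \ref{hopen} but using Theorem \ref{CE4} in place of Theorem \ref{CE3}, together with our description of $\B_{4,g}$ and the identification \eqref{b4chow} of its Chow ring.

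First I would unwind the moduli interpretation of $\aW_{4,g}'$. By the theorem on cohomology and base change, $\pi_*\aV_{4,g}|_{\B_{4,g}'}$ is locally free with fibers $H^0(\pp V, F^\vee \otimes \Sym^2 E)$, and push-pull identifies an $S$-point of $\aW_{4,g}'$ with a tuple $(V, E, F, \phi, \eta)$ where $(V, E, F, \phi)$ is an object of $\B_{4,g}'(S)$ and $\eta \in H^0(S, F^\vee \otimes \Sym^2 E)$. This looks almost exactly like the data of an object of $\mathrm{Quad}'(S)$, except that we have not yet imposed the ``right codimension'' condition.

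Next I would define an open substack $\mathcal{Y}_{4,g}' \subset \aW_{4,g}'$ by the conditions that (i) $\eta$ has the right codimension at every point of $S$, and (ii) the relative zero scheme $V(\Phi(\eta)) \subset \pp E^\vee$ is smooth over $S$. Both are open conditions, the first by semicontinuity of fiber dimension and the second by generic smoothness. Then Theorem \ref{CE4} immediately gives an equivalence $\H_{4,g}' \cong \mathcal{Y}_{4,g}'$: a family $C \xrightarrow{\alpha} P \to S$ in $\H_{4,g}'(S)$ produces a tuple $(V, E_\alpha, F_\alpha, \phi_\alpha, \eta_\alpha)$ lying in $\aW_{4,g}'$ (by the defining vanishing of $R^1\pi_*\aV_{4,g}$) and in $\mathcal{Y}_{4,g}'$ (since fibers of $C \to S$ are smooth curves, hence $0$-dimensional over fibers of $P \to S$); conversely, from an object of $\mathcal{Y}_{4,g}'(S)$ the scheme $V(\Phi(\eta)) \to S$ is a smooth, flat, finite degree $4$ cover of relative genus $g$, and Theorem \ref{CE4} guarantees this construction is inverse to the first (up to natural isomorphism).

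For the statement about generators I would argue as in Lemma \ref{hopen}. The open inclusion $\H_{4,g}' \hookrightarrow \aW_{4,g}'$ gives, by excision, a surjection $A^*(\aW_{4,g}') \twoheadrightarrow A^*(\H_{4,g}')$. Since $\aW_{4,g}' \to \B_{4,g}'$ is a vector bundle, the homotopy property gives $A^*(\aW_{4,g}') \cong A^*(\B_{4,g}')$, and another application of excision together with \eqref{b4chow} shows that $A^*(\B_{4,g}')$ is generated by $c_2, a_1, a_2, a_3, a_2', a_3', b_2, b_2'$. The pullbacks of these to $\H_{4,g}'$ agree (up to elementary identities among Chern classes of $\E$, $\F$) with the CE classes of Definition \ref{CEring}, and they are tautological by Theorem \ref{CEgen}. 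Hence $A^*(\H_{4,g}') = R^*(\H_{4,g}')$ is generated by the listed CE classes.

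The main obstacle I expect is not a computation but the bookkeeping to check that the equivalence in Step 2 really identifies the substacks cleanly: in particular that the ``right codimension'' and smoothness conditions cut out an open substack of $\aW_{4,g}'$, and that the natural map $\H_{4,g}' \to \aW_{4,g}'$ provided by the Casnati--Ekedahl construction lands in that open substack (so that it is indeed an open immersion rather than only a monomorphism). Once this is in place, the Chow-theoretic conclusion follows formally from Theorem \ref{Vchow}, relation \eqref{b4eq}, and the homotopy/excision properties.
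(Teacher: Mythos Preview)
Your proposal is correct and follows essentially the same approach as the paper: define the open substack $\mathcal{Y}_{4,g}'\subset\aW_{4,g}'$ by the smoothness condition on $V(\Phi(\eta))$, invoke Theorem~\ref{CE4} to identify it with $\H_{4,g}'$, and then use excision plus the homotopy property to deduce the generators from the description of $A^*(\B_{4,g})$. The only minor differences are that the paper phrases the open condition as ``$V(\Phi(\eta))\to S$ is a family of smooth curves'' (which subsumes your separate right-codimension condition), explicitly notes the Hilbert-polynomial check that the fibers have genus~$g$, and cites \eqref{b4eq} rather than \eqref{b4chow} for the generators of $A^*(\B_{4,g})$.
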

\begin{proof}
The objects of $\aW_{4,g}'$ are tuples $(S,V, E, F, \phi, \eta)$ where  $(S,V, E, F, \phi) \in \B_{4,g}'$ and $\eta \in H^0(\pp V, F^\vee \otimes \Sym^2 E)$. Letting $\Phi$ be as in \eqref{4comp}, we define $\mathcal{Y}_{4,g} \subset \aW_{4,g}'$ to be the open substack defined by the condition that $V(\Phi(\eta)) \subset \pp E^\vee \to S$ is a family of smooth curves. Considering the Hilbert polynomial of $V(\Phi(\eta))$, using \eqref{deg4res}, we see that the fibers have arithmetic genus $g$.
Using Theorem \ref{CE4}, we see that $\H_{4,g}'$ is equivalent to $\mathcal{Y}_{4,g}'$

%If $V$ is a rank $2$ vector bundle on $S$ with trivial determinant, then $\O_{\pp V}(-2) \cong \omega_{\pp V/S}$.
%Therefore, if we wish to work with $\pp^1$-fibrations, we replace all caligraphic letters with the same script letters and $\O_{\P}(-2)$ with $\omega_{\pi}$ in defining $\Hp_{4,g}^\circ$.

By excision, the Chow ring of $\H_{4,g}'$ is generated by restriction of classes from $\aW_{4,g}'$. Since $\aW_{4,g}'$ is a vector bundle over $\B_{4,g}'$, their Chow rings are isomorphic, so the statement about generators follows from \eqref{b4eq}.
\end{proof}

\begin{lem} \label{b4}
The codimension of $\Supp (R^1 \pi_*(\aV_{4,g} \otimes \O_{\P}(-2)))$ is at least $\frac{g+3}{4} - 4$. That is, the codimension of the complement of $\B_{4,g}^\circ \subseteq \B_{4,g}$ has codimension at least $\frac{g+3}{4} - 4$.
\end{lem}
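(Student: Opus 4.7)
The strategy is to describe $Z := \Supp(R^1\pi_*(\aV_{4,g} \otimes \O_{\P}(-2)))$ set-theoretically as a union of simultaneous splitting loci for the pair $(\E, \F)$, compute the codimension of each stratum from \eqref{simsp}, and minimize over the combinatorial constraint cutting out $Z$.

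First I would use cohomology and base change: $Z$ is the locus of $b \in \B_{4,g}$ over which $\F^\vee \otimes \Sym^2 \E$ restricted to the fiber has a line-bundle summand of degree $\leq 0$. Writing the splitting types on the fiber as $\vec{e} = (e_1 \leq e_2 \leq e_3)$ and $\vec{f} = (f_1 \leq f_2)$, with $e_1 + e_2 + e_3 = f_1 + f_2 = g+3$ by Example \ref{p1ex}, the line-bundle summands of $\F^\vee \otimes \Sym^2 \E$ on the fiber are the $\O(e_i + e_j - f_k)$ for $i \leq j$ and $k \in \{1,2\}$, whose minimum degree is $2e_1 - f_2$. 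Hence $Z$ is the union of the simultaneous splitting loci with $2e_1 \leq f_2$. Since $\B_{4,g} \to \Ub_{3,g+3} \times_{\BSL_2} \Ub_{2,g+3}$ is a $\mathbb{G}_m$-torsor, codimensions of splitting loci pull back; so by \eqref{simsp} the $(\vec{e}, \vec{f})$-stratum has codimension
\[
\sum_{1 \leq i < j \leq 3} \max(e_j - e_i - 1, 0) \;+\; \max(f_2 - f_1 - 1, 0).
\]

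It then remains to minimize this quantity subject to $e_1 \geq 0$ (from global generation of $\E$), $e_1 + e_2 + e_3 = f_1 + f_2 = g+3$, $e_1 \leq e_2 \leq e_3$, $f_1 \leq f_2$, and $2e_1 \leq f_2$. I expect the minimum to be attained in the ``balanced $\F$'' case where $f_2 \in \{\lfloor (g+3)/2\rfloor, \lceil (g+3)/2\rceil\}$ so the $\F$-contribution vanishes; then $e_1 \leq \lceil(g+3)/2\rceil/2 \approx (g+3)/4$, while $e_2 + e_3 = g+3 - e_1 \geq 3(g+3)/4$, yielding
\[
h^1(\pp^1, \E nd \O(\vec{e})) \;\geq\; (e_2 - e_1 - 1) + (e_3 - e_1 - 1) \;=\; (e_2 + e_3) - 2e_1 - 2 \;\geq\; \tfrac{g+3}{4} - O(1).
\]
If instead $f_2 = \lceil(g+3)/2\rceil + t$ for some $t \geq 1$, then the $\F$-contribution jumps by $2t$ while the loosened constraint $2e_1 \leq f_2$ permits $e_1$ to grow by only $\lfloor t/2 \rfloor$, which decreases the $\E$-contribution by at most $t$; hence the total sum does not decrease. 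The main obstacle, and the only delicate part of the argument, is the careful bookkeeping of floor/ceiling adjustments in the minimization across the various parities of $g$; the slack between the stated bound $(g+3)/4 - 4$ and the actual minimum (which lies closer to $(g+3)/4 - 2$) is designed precisely to absorb these rounding terms uniformly in $g$, yielding $\codim Z \geq (g+3)/4 - 4$.
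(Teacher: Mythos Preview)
Your approach is essentially the paper's: identify $Z$ as the union of simultaneous splitting loci with $2e_1 \leq f_2$, invoke \eqref{simsp} for their codimensions, and minimize. The paper executes the minimization by first bounding below by the linear expression $2e_3 - 2e_1 + f_2 - f_1 - 4$ and then solving the resulting linear program on the normalized simplex (minimum $\tfrac14$ at $(\tfrac14,\tfrac38,\tfrac38,\tfrac12,\tfrac12)$), whereas you argue directly by the balanced-$\F$ case plus a perturbation.

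One small slip: when $f_2$ increases by $t$ and $e_1$ grows by $\lfloor t/2\rfloor$, your own lower bound $(e_2+e_3)-2e_1-2=(g+3)-3e_1-2$ drops by $3\lfloor t/2\rfloor$, not ``at most $t$''. This does not damage the conclusion, since the $\F$-contribution still rises by $2t-1 \geq 3\lfloor t/2\rfloor$ for $t\geq 1$ up to the $O(1)$ rounding you already absorb into the slack between $(g+3)/4-2$ and $(g+3)/4-4$; but you should correct the constant in that sentence.
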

\begin{proof}
By equation \eqref{simsp},
the  codimension of the support of $R^1 \pi_*(\F^\vee \otimes \Sym^2 \E \otimes \O_{\P}(-2))$ is the minimum value of 
$h^1(\pp^1, \E nd(\O(\vec{e}))) + h^1(\pp^1, \E nd(\O(\vec{f}))$
as we range over splitting types $\vec{e} = (e_1, e_2, e_3)$ with $e_1 \leq e_2 \leq e_3$ and $\vec{f} = (f_1, f_2)$ with $f_1 \leq f_2$ and 
\[h^1(\pp^1, \O(\vec{f})^\vee \otimes (\Sym^2 \O(\vec{e})) \otimes \O_{\pp^1}(-2)) > 0 \qquad \Leftrightarrow \qquad 2e_1 \leq f_2.\]
We have
\[h^1(\pp^1, \E nd(\O(\vec{e}))) + h^1(\pp^1, \E nd(\O(\vec{f}))) \geq 2e_3 - 2e_1 - 3 + f_2 - f_1 - 1.\]
To find the minimum, we consider the function of $5$ real variables
\[f(x_1, x_2, x_3, y_1, y_2) := 2x_3 - 2x_1 + y_2 - y_1 \]
on the compact region $D$ defined by 
\begin{gather*}
0 \leq x_1 \leq x_2 \leq x_3, \quad x_1 + x_2 + x_3 = 1, \quad y_1 \leq y_2, \quad y_1 + y_2 = 1, \quad 2x_1 \leq y_2.
\end{gather*}
Since $f$ is piecewise linear, its extreme values are attained where multiple boundary conditions intersect at a point. Code provided at \cite{github} performs the linear algebra to locate such points and evaluates $f$ at the them to determine its minimum. The minimum is $\frac{1}{4}$, attained at $(\frac{1}{4}, \frac{3}{8}, \frac{3}{8}, \frac{1}{2}, \frac{1}{2})$.
Thus,
\[\dim \Supp R^1 \pi_*(\aV_{4,g} \otimes \O_{\P}(-2)) \geq (g+3) \cdot \min_D (f) - 4 = \frac{g+3}{4} - 4. \qedhere  \]
\end{proof}

For later use, let us note an immediate consequence of the previous lemma: Using excision and \eqref{b4}, we see
\begin{equation} \label{b4forlater}
\trun^{(g+3)/4 - 4} A^*(\B_{4,g}^\circ) = \trun^{(g+3)/4 - 4} \qq[c_2, a_1, a_2, a_3, a_2', a_3', b_2', b_2].
\end{equation}

Just because the complement of $\B_{4,g}^\circ$ has high codimension inside $\B_{4,g}$ does \textit{not} mean that the complement of $\H_{4,g}^\circ$ will have high codimension in $\H_{4,g}$. 
The condition for $\alpha: C \to \pp^1$ to be in $\H_{4,g}^\circ$ is that $h^1(\pp^1, F_\alpha^\vee \otimes \Sym^2 E_\alpha) =0$. We shall refer to this as ``our cohomological condition." Our cohomological condition fails for factoring covers, as we explain now.
Suppose $\alpha: C \rightarrow \pp^1$ factors as $C \xrightarrow{\beta} C' \xrightarrow{h} \pp^1$ where $C'$ has genus $g'$. We claim $E_\alpha = \O(g'+ 1) \oplus E'$ for some rank $2$ bundle $E'$. Indeed, because $\beta$ is a double cover, we have 
\[
\beta_*\O_{C}\cong \O_{C'}\oplus L,
\]
where $L$ is a line bundle on $C'$. Pushing forward again by $h$,
\[
\alpha_*\O_C\cong \O_{\p^1}\oplus \O_{\p^1}(-g'-1) \oplus h_* L.
\]
This establishes that $E_\alpha$ has an $\O(g'+1)$ summand. In particular, since some summand of $F$ has degree at least $\frac{g+3}{2}$, 
\[h^1(\pp^1, F^\vee \otimes \Sym^2 E) \geq \frac{g+3}{2} - 2(g'+1) - 1.\]
Thus, covers that factor with $g'$ small are never in $\H^\circ_{4,g}$.
More precisely, if a factoring cover does satisfy our cohomological condition, then the genus of the intermediate curve must satisfy $2(g' + 1) \geq \frac{g+3}{2}$.

\begin{lem} \label{nfshave}
The locus of degree $4$ covers $C \to \pp^1$ that factor $C \rightarrow C' \rightarrow \pp^1$ where $C'$ has genus $g'$ has codimension $2(g'+1)$ in $\H_{4,g}.$ Hence, the complement of $\H^{\circ}_{4,g} \cap \H^{\nf}_{4,g} \subset \H^{\circ}_{4,g}$ has codimension at least $\frac{g+3}{2}$.
\end{lem}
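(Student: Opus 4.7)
The plan is to carry out an explicit dimension count on factoring covers, then combine with the cohomological restriction established in the paragraph immediately preceding the lemma.

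For the first assertion, I parametrize the locus of factoring covers with fixed intermediate genus $g'$ in two stages. The first stage is the choice of a double cover $C' \to \pp^1$, which is a point of $\H_{2,g'}$. By Riemann--Hurwitz modulo the $\PGL_2$ action on the target, $\dim \H_{k,g} = 2g + 2k - 5$, so $\dim \H_{2, g'} = 2g' - 1$. The second stage is the choice of a double cover $C \to C'$ with $g(C) = g$. Applying Riemann--Hurwitz to $C \to C'$ gives a branch divisor of degree $2g - 4g' + 2$ on $C'$. Since a double cover of $C'$ is the data of a line bundle $L$ on $C'$ together with a nonzero section of $L^{\otimes 2}$ (up to scalar), with $\deg L = g - 2g' + 1$, a dimension count via Riemann--Roch shows the space of such double covers has dimension $g' + (2g - 4g' + 2 - g' + 1) - 1 = 2g - 4g' + 2$ (the $h^1$ vanishes for the generic $L$ since $\deg L^{\otimes 2}$ exceeds $2g'-2$). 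Summing, the factoring locus with intermediate genus $g'$ has dimension
\[
(2g' - 1) + (2g - 4g' + 2) = 2g - 2g' + 1.
\]
Since $\dim \H_{4,g} = 2g + 3$, the codimension is $2(g'+1)$, as claimed.

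For the second assertion, the complement of $\H^{\circ}_{4,g} \cap \H^{\nf}_{4,g}$ inside $\H^{\circ}_{4,g}$ is the union, over $g' \geq 0$, of the strata $\H^{\circ}_{4,g} \cap \H^{f}_{4,g}(g')$ where $\H^{f}_{4,g}(g')$ denotes the factoring locus with intermediate curve of genus $g'$. By the analysis preceding the lemma, whenever $\H^{\circ}_{4,g} \cap \H^{f}_{4,g}(g')$ is nonempty, the cohomological condition forces $2(g'+1) \geq (g+3)/2$. Combined with Part 1, every nonempty stratum has codimension at least $(g+3)/2$, proving the second assertion.

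The main delicate point is the dimension count for double covers of a fixed $C'$: one needs the generic Riemann--Roch computation to apply, i.e.\ that the relevant $h^1$ vanishes, and one should also confirm that the parametrization is generically one-to-one onto the factoring locus (since $C \to \pp^1$ determines $C'$ as the quotient by the hyperelliptic involution on each fiber of $C \to C'$). Once this is in place, the rest of the argument is a clean assembly of the codimension from Part 1 with the cohomological inequality recorded in the discussion preceding the lemma.
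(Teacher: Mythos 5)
Your argument is the same as the paper's: $\dim\H_{4,g}=2g+3$, the stratum of factoring covers is fibered as (double covers $C'\to\pp^1$ of genus $g'$, dimension $2g'-1$) times (double covers $C\to C'$, dimension $2g-4g'+2$), and the second assertion follows by combining the resulting codimension $2(g'+1)$ with the inequality $2(g'+1)\geq\frac{g+3}{2}$ from the preceding paragraph. One small correction: your justification that $h^1(C',L^{\otimes 2})=0$ for generic $L$ requires $\deg L^{\otimes 2}=2g-4g'+2>2g'-2$, i.e.\ $g'<(g+2)/3$, which fails for the larger values of $g'$ (up to $(g+1)/2$) that you also need in the second assertion; the count $2g-4g'+2$ is nonetheless correct for all $g'$ — it is simply the number of branch points of $C\to C'$, which is how the paper (via Riemann--Hurwitz) phrases it, or equivalently one can parametrize by the branch divisor in $\Sym^{2g-4g'+2}C'$ together with a choice of square root of $\O(D)$.
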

\begin{proof}
The dimension of the Hurwitz stack is the degree of the branch locus minus $3 = \dim \Aut(\pp^1)$, giving $\dim \H_{4,g} = 2g + 3$. Meanwhile, by Riemann--Hurwitz, the dimension of the space of genus $g$ double covers of a fixed curve $C'$ of genus $g'$ is $2g - 2 - 2(2g' - 2)$. The dimension of the stack of genus $g'$ double covers of $\pp^1$ modulo $\Aut(\pp^1)$ is $2g' - 1$. Therefore, the dimension of the space of degree $4$ covers that factor through a curve of genus $g'$ is
\[2g - 2 - 2(2g' - 2) + 2g' - 1 = 2g + 1 - 2g' = \dim \H_{4,g} - 2(g' + 1).\qedhere\]
\end{proof}

Covers that factor through a curve of low $g'$ are therefore loci of fixed codimension that fail our cohomological condition. For this reason, in degree $4$, our techniques will only prove that certain Chow groups of the locus of \textit{non-factoring} covers are generated by tautological classes.
Below, we collect some results about the splitting types of the vector bundles associated to a degree $4$ cover. These facts were known to Schreyer \cite{S} (though Schreyer's notation differs from ours). We include proofs here as they demonstrate the geometric meaning of splitting types.

\begin{prop} \label{known}
Suppose $\alpha: C \rightarrow \pp^1$ is a degree $4$ cover and $E_\alpha = \O(e_1) \oplus \O(e_2) \oplus \O(e_3)$ with $e_1 \leq e_2 \leq e_3$, and $F = \mathcal{O}(f_1)\oplus\mathcal{O}(f_2)$ with $f_1\leq f_2$. The following are true:
\begin{enumerate}
    \item $e_1+e_2+e_3=f_1+f_2 =g+3$ and with $e_1 \geq 1$ if $C$ irreducible. %(Lazarsfeld, Prop. 1.2)
    \item If $C$ is irreducible, $2e_1\geq f_1$, %(Anands, ) 
    and $2e_2 \geq f_2$. %(Schreyer, p. 128). 
     Hence $F$ is globally generated. \label{DP}
%    \item \label{ob} If $\alpha$ does not factor then $m_3-m_2\leq m_1$ and $m_2-m_1\leq m_1$. [It seems this is not necessary]
    \item If $\alpha$ does not factor then $e_1 + e_3 - f_2 \geq 0$.
    \label{6}
%    \item If $m_1 + m_3 - b < 0$ then $a = 2m_1$. [I think useful for low genus things, but not necessary for coh condition lemma]
\end{enumerate}
\end{prop}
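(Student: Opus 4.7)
The plan is to translate each claim into a concrete statement about the Casnati--Ekedahl map $\eta_\alpha \colon F_\alpha \hookrightarrow \Sym^2 E_\alpha$ in coordinates adapted to the splittings on $\p^1$. Choose bases so that $E_\alpha = \O(e_1) \oplus \O(e_2) \oplus \O(e_3)$ with dual fiber coordinates $x_1, x_2, x_3$ on $\pp E_\alpha^\vee \to \p^1$, and write $\eta_\alpha$ as a pair $(q_1, q_2)$ with $q_i \in H^0(\p^1, \Sym^2 E_\alpha \otimes \O(-f_i))$, viewed as a family of quadrics whose common vanishing in $\pp E_\alpha^\vee$ cuts out $C$.

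For part (1), $e_1+e_2+e_3 = g+3$ is the content of Example \ref{p1ex}, while $f_1 + f_2 = g + 3$ follows from $\det F_\alpha \cong \det E_\alpha$. When $C$ is irreducible (hence connected), applying $H^0$ to $0 \to \O_{\p^1} \to \alpha_*\O_C \to E_\alpha^\vee \to 0$ and using $H^0(C, \O_C) = k$ together with $H^1(\p^1, \O_{\p^1}) = 0$ yields $H^0(\p^1, E_\alpha^\vee) = 0$; this forces every summand of $E_\alpha^\vee$ to have strictly negative degree, giving $e_1 \geq 1$.

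For part (2), the strategy is that failure of either inequality produces a geometric degeneration contradicting irreducibility. If $2e_1 < f_1$, then the coefficient of $x_1^2$ in each $q_i$ lies in $H^0(\p^1, \O(2e_1 - f_i))$, which vanishes for both $i$; hence both quadrics vanish on the section $s_1 \colon \p^1 \hookrightarrow \pp E_\alpha^\vee$ induced by the subbundle $\O(e_1) \subset E_\alpha$, so $s_1(\p^1) \subseteq C$ set-theoretically, contradicting that $C$ is irreducible of degree $4$ over $\p^1$ while $s_1(\p^1)$ has degree $1$. If instead $2e_2 < f_2$, then for any monomial $x_j x_k$ with $e_j + e_k \leq 2e_2$ (in particular for $x_1^2, x_1 x_2, x_2^2$) the coefficient in $q_2$ vanishes, so every monomial with non-vanishing coefficient in $q_2$ contains $x_3$ and $q_2 = x_3 \cdot L$ globally for some linear form $L$. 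Then $\{q_2 = 0\}$ decomposes as $\{x_3 = 0\} \cup \{L = 0\}$, and $C = \{q_1 = 0\} \cap \{q_2 = 0\}$ splits as a union of degree-$2$ subschemes over $\p^1$, again contradicting either the degree or the irreducibility of $C$. Global generation of $F_\alpha$ (equivalently $f_1 \geq 0$) follows from a separate cohomological analysis: pushing forward the Koszul-type resolution \eqref{deg4res} twisted by $\O(2)$ identifies $H^1(\p^1, F_\alpha)$ with $\coker(H^0(\p^1, \Sym^2 E_\alpha) \to H^0(C, \omega_\alpha^{\otimes 2}))$, and the latter map is surjective by the geometry of the pencil of quadrics.

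For part (3), the plan is to extract an honest factorization from the splitting-type hypothesis. Assume $\alpha$ does not factor but $e_1 + e_3 < f_2$. Every monomial $x_1 x_j$ in $\Sym^2 E_\alpha$ has degree $e_1 + e_j \leq e_1 + e_3 < f_2$, so its coefficient in $q_2$ vanishes and $q_2 = q_2(x_2, x_3)$ depends only on $x_2, x_3$. Geometrically, $\{q_2 = 0\}$ is the preimage under the projection $\pi_1 \colon \pp E_\alpha^\vee \dashrightarrow \pp(E_\alpha/\O(e_1))^\vee$ from $s_1$ of a relative conic $C' \subset \pp(E_\alpha/\O(e_1))^\vee$; since the target is a $\p^1$-bundle over $\p^1$ and $C'$ meets each fiber in two points, $C' \to \p^1$ is a double cover. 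The restriction $\pi_1|_C \colon C \to C'$ has degree $2$, since each fiber of $\pi_1$ over a point of $C'$ is a line meeting $\{q_1 = 0\}$ in two points. Thus $\alpha$ factors as $C \to C' \to \p^1$, a contradiction. The main obstacle is to verify that $C'$ is integral: if $q_2$ factored globally into two linear forms, then $C$ would itself be reducible, and handling this degenerate case alongside the generic factoring argument requires a bit of case analysis using the hypothesis that $C$ is irreducible.
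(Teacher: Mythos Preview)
Your argument for parts (1), (2) (the two inequalities), and (3) follows the same route as the paper: pick coordinates $x_1,x_2,x_3$ on $\pp E_\alpha^\vee$ adapted to the splitting, write the two defining quadrics, and read off constraints from the vanishing of low-degree coefficients. For (2) you show the section $x_2=x_3=0$ lies in $C$ (resp.\ $x_3\mid q_2$), exactly as the paper does; for (3) you note that all $x_1$-coefficients of $q_2$ vanish, so $V(q_2)$ is a cone over a relative conic and projection from the distinguished section gives a factorization, again matching the paper. Your worry at the end of (3) about the integrality of $C'$ is harmless: if $q_2$ split globally as a product of two linear forms, then $C=V(q_1,q_2)$ would itself be reducible, which is excluded.

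There is one genuine gap, in your deduction that $F_\alpha$ is globally generated. Your proposed cohomological argument identifies $H^1(\pp^1,F_\alpha)$ with a cokernel and claims it vanishes; but on $\pp^1$ the vanishing of $H^1(F_\alpha)$ only gives $f_1\ge -1$, not the required $f_1\ge 0$. (Also, ``surjective by the geometry of the pencil of quadrics'' is not an argument.) The paper instead obtains global generation as an immediate numerical consequence of the inequalities you have already proved: since $2e_2\ge f_2$ and $e_1+e_2+e_3=f_1+f_2=g+3$, one has
\[
f_1=(g+3)-f_2\ge (e_1+e_2+e_3)-(e_2+e_3)=e_1\ge 1,
\]
so every summand of $F_\alpha$ has positive degree. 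Replace your cohomological detour with this one-line inequality.
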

%Refs:
%R. Lazarsfeld, A barth-Type theorem for branched covers of Projective space, Math. An.. 249, 154--162 (1980)
%F. O. Schreyer, Syzygies of Canonical Curves and Special Linear Series, Math. Ann. 275, 105--137 (1986)
%\begin{remark}
%We are following notation of Anand and Anand. The translation to Schreyer is $m_3 = e_1 + 2, m_2 = e_2 + 2, m_1 = e_3 + 2, b = b_1 + 4, a = b_2 + 4$. One confusing thing is Schreyer writes $b_2 \geq -1$. This would say that $a \geq 3$. But I think this is because Schreyer is thinking about things where the gonality is exactly $4$. This stems from something earlier where Schreyer has $e_3 \geq 0$, which would say $m_1 \geq 2$, which must follow from a gonality exactly $4$ assumption.
%\end{remark}
\begin{proof}
(1) follows from Example \ref{p1ex} and fact that $\det E_\alpha\cong \det F_\alpha$. If $C$ is irreducible, we have $h^0(\pp^1, E_\alpha^\vee) = h^0(\pp^1, \alpha_* \O_C) - 1 = 0$, so $e_1 \geq 1$.

The remaining conditions can be seen from the description of $C$ as the intersection of two relative quadrics on $\pp E_\alpha^\vee$. Let us choose a splitting $E = \O(e_1) \oplus \O(e_2) \oplus \O(e_3)$ and corresponding coordinates $X, Y, Z$ on $\pp E^\vee$. The two quadrics that define $C$ are of the form
\begin{align}
p &= p_{1,1}X^2 + p_{1,2} XY + p_{2,2} Y^2 + p_{1,3} XZ + p_{2,3} YZ + p_{3,3} Z^2 \label{peq} \\
q &= q_{1,1}X^2 + q_{1,2} XY + q_{2,2} Y^2 + q_{1,3} XZ + q_{2,3} YZ + q_{3,3}, Z^2 \label{qeq}
\end{align}
where $p_{i,j}$ is a polynomial on $\pp^1$ of degree $e_i + e_j - f_1$ and $q_{i,j}$ is a polynomial on $\pp^1$ of degree $e_i + e_j - f_2$. If this degree is negative, then we mean this coefficient is zero. 

(2) If $2e_1< f_1$, then $p_{1,1} = q_{1,1} = 0$ and $C = V(p, q)$ would contain the curve $Y = Z = 0$, forcing $C$ to be reducible. If $2e_2 < f_2$, then $q_{1,1} = q_{1,2} = q_{2,2} = 0$ so $Z$ divides $q$. If $C$ were irreducible, it would be contained in one of the linear components of $V(q)$ but this is impossible. The global generation of $F$ follows because the inequalities imply $f_1 = g + 3 - f_2 \geq e_1 \geq 1$.

%(3) is a theorem of Ohbuchi \hannah{Ref needed}
%the last inequliaty follows from the string of inequalities 
%\[2m_2 \geq m_1 + m_2 \geq m_3 = g+3 - (m+1 + m_2) \geq g + 3 - 2m_2.\]

(3) If $e_1 + e_3 - f_2 \leq -1$, then we show $\alpha$ factors. This inequality implies
\[2e_1 - f_2 \leq e_1 + e_2 - f_2 \leq e_1 + e_3 - f_2 \leq -1,\]
so the coefficients $p_{1,1}, p_{1,2}$, and $p_{1,3}$ vanish. Therefore, $p$ is a combination of $Y^2, YZ,$ and $Z^2$. Hence, $V(p)$ is reducible in every fiber and contains the point $[1, 0, 0]$ in each fiber. 
\begin{center}
\includegraphics[width=3in]{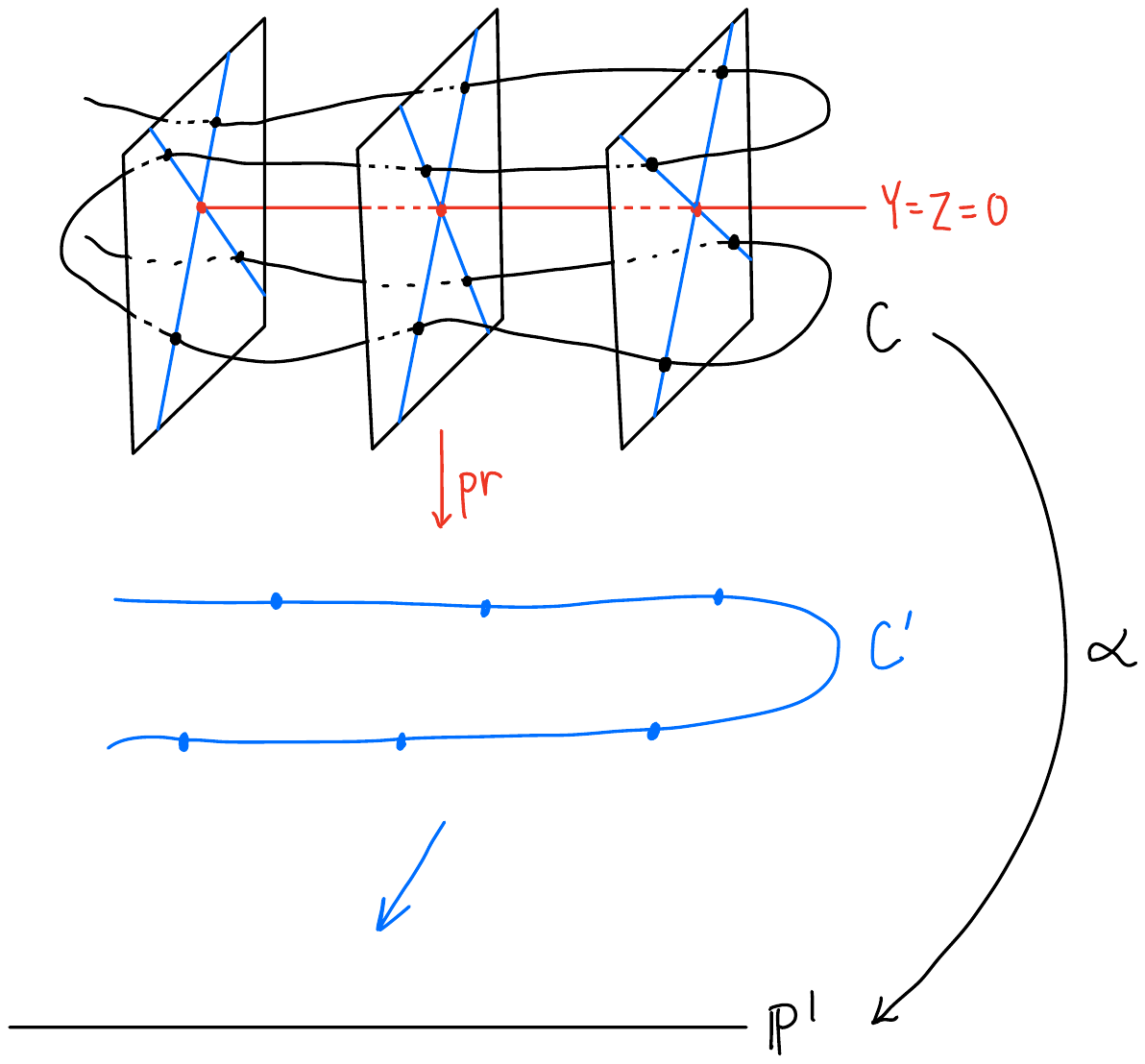}
\end{center}
In other words, each fiber of $C \to \pp^1$ consists of two pairs of points colinear with $[1, 0, 0]$.
Projection away from the line $Y = Z = 0$ defines a double cover $C \to C'$ that factors $\alpha$. 
\end{proof}

The simultaneous splitting loci of the universal $\E$ and $\F$ over $\H_{4,g}$ give rise to a stratification of $\H_{4,g}$.
In \cite[Remark 4.2]{DP}, Deopurkar--Patel show that the codimension of the splitting locus where $\E$ has splitting type $\vec{e}$ and $\F$ has splitting type $\vec{f}$ is
\begin{equation} \label{4codim}
h^1(\pp^1, \E nd(\O(\vec{e})))+h^1(\pp^1, \E nd(\O(\vec{f})))-h^1(\pp^1, \O(\vec{f})^\vee \otimes \Sym^2  \O(\vec{e})).
\end{equation}
Note that this differs from \eqref{simsp} by $h^1(\pp^1, \O(\vec{f})^\vee \otimes \Sym^2  \O(\vec{e}))$!

\begin{example}[$g = 6$]
We have $\dim \H_{4,6} = \dim \M_6 = 15$.
Using Proposition \ref{known} (2), we see that the non-empty strata are
\begin{enumerate}
    \item $\vec{e} = (3, 3, 3), \vec{f} = (4,5)$, (codimension $0$): The generic stratum.
    \item $\vec{e} = (2, 3, 4), \vec{f} = (4,5)$, (codimension $1$): By Casnati-Del Centina \cite{CDC}, the bielliptic locus is contained in this stratum as the locus where $p_{1,2} = 0$ and $p_{1,3} = 0$. Note that $\deg(p_{1,2}) = 0$ and $\deg(p_{1,3}) = 1$, so this represents $3$ conditions, making the bielliptic locus codimension $4$ inside $\H_{4,6}$.
    \item $\vec{e} = (3, 3, 3), \vec{f} = (3,6)$, (codimension $2$): This stratum consists of trigonal curves. We have $\pp E^\vee \cong \pp^1 \times \pp^2$. Since $\deg(q_{i,j}) =0$ and $\deg(p_{i,j}) = 3$ for all $i, j$, the projection onto the $\pp^2$ factor realizes $C$ as a degree $3$ cover of a conic in $\pp^2$.
    \item $\vec{e} = (2, 3, 4), \vec{f} = (3, 6)$, (codimension 2): Curves with a $g^2_5$. We have $p_{1,1} = 0$ and $\deg(q_{1,1}) = 1$, so the curve meets the line $Y = Z = 0$ in $\pp E^\vee$ in one point, say $\nu \in C$. The canonical line bundle on $C$ is the restriction of $\O_{\pp E^\vee}(1) \otimes \omega_{\pp^1}$, which contracts the line $Y = Z = 0$ in the map $\pp E^\vee \to \pp^5$. Thus, $\nu$ is contained in each of the planes spanned by the image of a fiber of $\alpha$ under the canoncial. Hence, the $g^1_4$ plus $\nu$ is a $g^2_5$. The locus of genus $6$ curves possessing a $g^2_5$ is codimension $3$ in $\M_6$, but this stratum has codimension $2$ in $\H_{4,6}$ because projection from any point on a plane quintic gives a $g^1_4$.
    \item $\vec{e} = (1, 4, 4), \vec{f} = (2, 7)$, (codimension $2$): Hyperelliptic curves
\end{enumerate}
The open $\H_{4,6}'$ is the union of strata (1), (2), and (3), while $\H_{4,6}^{\circ}$ contains only the generic stratum (1).
The image in $\Mg$ of $\H_{4,6}'$ under the forgetful map is what Penev--Vakil \cite{PV} call the ``Mukai general locus" of genus $6$ curves.
\end{example}

Using the numerical results of Lemma \ref{known}, we show that the codimension of \textit{non-factoring} covers that fail our cohomological condition grows as a positive fraction of the genus. 
\begin{lem} \label{coh-4}
The locus of non-factoring degree $4$ covers $\alpha: C \rightarrow \pp^1$ such that
\[h^1(\pp^1, F_\alpha^\vee \otimes \Sym^2 E_\alpha \otimes \O(-2)) > 0\]
has codimension at least $\frac{g+3}{4} - 4$. That is, the codimension the complement of $\H^\circ_{4,g} \cap \H^{\nf}_{4,g} \subset \H^{\nf}_{4,g}$ is at least $\frac{g+3}{4} - 4$.
\end{lem}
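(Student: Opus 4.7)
The plan is to mimic the proof of Lemma \ref{b4}, but replacing the codimension formula \eqref{simsp} for splitting loci in $\B_{4,g}$ with the Deopurkar--Patel formula \eqref{4codim} for splitting loci in $\H_{4,g}$, and imposing the constraints from Proposition \ref{known} on splitting types of non-factoring covers with $C$ irreducible.

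First I would stratify $\H^{\nf}_{4,g}$ by the splitting type $(\vec{e},\vec{f})$ of $(E_\alpha, F_\alpha)$. By Proposition \ref{known}, the possible splitting types satisfy
\[
e_1+e_2+e_3=f_1+f_2=g+3,\quad 1\le e_1\le e_2\le e_3,\quad f_1\le f_2,
\]
\[
2e_1\ge f_1,\quad 2e_2\ge f_2,\quad e_1+e_3\ge f_2,
\]
and the strata outside $\H^{\circ}_{4,g}$ are precisely those for which some pair $(i,j)$ satisfies $2e_i\le f_j$. In view of the orderings it suffices to treat the weakest such condition, $2e_1\le f_2$.

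Next I would apply \eqref{4codim}. The key observation is that the non-factoring inequality $e_1+e_3\ge f_2$ combined with $2e_2\ge f_2$ forces $e_i+e_j\ge f_k$ for every pair $(i,j)\notin\{(1,1),(1,2)\}$. Consequently, the correction term $h^1(\pp^1,\O(\vec f)^{\vee}\otimes\Sym^2\O(\vec e))$ receives contributions only from the terms indexed by $(1,1)$ and $(1,2)$, so it is controlled.

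I would then pass to the asymptotic optimization problem exactly as in Lemma \ref{b4}: setting $x_i=e_i/(g+3)$ and $y_j=f_j/(g+3)$, the codimension divided by $g+3$ is a piecewise linear function
\[
f(x_1,x_2,x_3,y_1,y_2)=2x_3-2x_1+y_2-y_1-\max(0,y_2-2x_1)-\max(0,y_2-x_1-x_2)
\]
(up to the additive rounding constants coming from the $-1$'s in the $h^1$ formulas, which account for the $-4$) on the compact polytope $D'$ cut out by $0\le x_1\le x_2\le x_3$, $x_1+x_2+x_3=1$, $y_1\le y_2$, $y_1+y_2=1$, $2x_1\ge y_1$, $2x_2\ge y_2$, $x_1+x_3\ge y_2$, and $2x_1\le y_2$. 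As in Lemma \ref{b4}, the extrema of $f$ on $D'$ are attained at the vertices, which can be enumerated and checked by code analogous to the one at \cite{github}. I expect the minimum to remain $1/4$, attained at $(x_1,x_2,x_3,y_1,y_2)=(1/4,3/8,3/8,1/2,1/2)$, where the correction vanishes, giving the desired bound $(g+3)/4-4$.

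The main obstacle is verifying that the subtracted correction term does not lower the minimum below $1/4$. Without the non-factoring constraint $x_1+x_3\ge y_2$, the correction could contribute through additional $(i,j)$ pairs and would indeed reduce the minimum; the crux is thus to confirm that the non-factoring inequality carves out a sub-polytope on which the two extra corrections from $(1,1)$ and $(1,2)$ are too small to beat the gain $2x_3-2x_1+y_2-y_1$ anywhere. This verification is a finite linear-algebra computation on the vertices of $D'$.
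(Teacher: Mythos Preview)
Your proposal is correct and follows essentially the same approach as the paper's proof: both use the Deopurkar--Patel codimension formula \eqref{4codim}, invoke Proposition \ref{known} to reduce the $h^1$ correction to the two summands $\O(2e_1-f_2)$ and $\O(e_1+e_2-f_2)$, set up the identical piecewise linear function $f$ on the same polytope, and find the minimum $\tfrac{1}{4}$ at $(\tfrac{1}{4},\tfrac{3}{8},\tfrac{3}{8},\tfrac{1}{2},\tfrac{1}{2})$ via a vertex check (the paper likewise defers this to code at \cite{github}). One small clarification: when you say contributions come only from pairs $(1,1)$ and $(1,2)$, you should note explicitly that the $k=1$ case (i.e.\ $f_1$) is already ruled out by $2e_1\ge f_1$, so that the two remaining summands are $(i,j,k)=(1,1,2)$ and $(1,2,2)$.
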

\begin{proof}
By equation \eqref{4codim}, the codimension of the locus of covers $\alpha$ with $E_\alpha = \O(\vec{e})$ and $F_\alpha = \O(\vec{f})$ 
is
\begin{align*}
u(\vec{e}, \vec{f}) &:= h^1(\pp^1, \E nd(\O(\vec{e}))) + h^1(\pp^1, \E nd(\O(\vec{f}))) - h^1(\pp^1, \O(\vec{f})^\vee \otimes \Sym^2 \O(\vec{e})) \\
&\geq 2e_3 - 2e_1 + f_2 - f_1 - 4 - h^1(\pp^1, \O(\vec{f})^\vee \otimes \Sym^2 \O(\vec{e})).
\end{align*}
Assuming $\alpha$ does not factor, Proposition \ref{known} (2) and (3) show that the only summands of $\O(\vec{f})^\vee \otimes \Sym^2\O(\vec{e})$ that can contribute to $h^1(\pp^1, \O(\vec{f})^\vee \otimes \Sym^2 \O(\vec{e}))$ are $\mathcal{O}(2e_1-f_2)$ and $\mathcal{O}(e_1+e_2-f_2)$.
Thus, our task is to bound the function
\begin{align*}
    &2e_3 - 2e_1 + f_2 - f_1 - 4 - \max\{0, f_2 - 2e_1 - 1\} - \max\{0, f_2 - e_1 - e_2 - 1\}
    \end{align*}
from below on the region where the conditions of Proposition \ref{known} hold and $2e_1 -f_2 \leq 0$, which is equivalent to $h^1(\pp^1, \O(\vec{f})^\vee \otimes \Sym^2\O(\vec{e}) \otimes \O(-2)) > 0$.

Let us introduce a function of $5$ real variables
\begin{align*}
f(x_1, x_2, x_3, y_1, y_2) &:= 2x_3 - 2x_1 + y_2 - y_1 - \max\{0, y_2 - 2x_1\} - \max\{0, y_2 - x_1 - x_2\}
\end{align*}
so that
\[u(\vec{e}, \vec{f}) \geq (g+3) f\left(\frac{e_1}{g+3}, \frac{e_2}{g+3}, \frac{e_3}{g+3},\frac{f_1}{g+3}, \frac{f_2}{g+3}\right) - 4.\]
We wish to minimize $f$ on the compact region defined by %\hannah{also added below $y_2 \leq 2x_2$. This only makes the region smaller...just adding it so someone doesn't wonder why we  left it out}
\begin{gather*}
 x_1 + x_2 + x_3 = 1, \quad y_1 + y_2 = 1, \quad
 0 \leq x_1 \leq x_2 \leq x_3, \quad
0 \leq y_1 \leq 2x_1 \leq y_2 \leq 2x_2, x_1 + x_3.
\end{gather*}
These correspond to the conditions from Proposition \ref{known}, together with the condition that $2e_1 \leq f_2$, which must be satisfied if the cohomological condition is failed. 
Since $f$ is piecewise linear, its extreme values are attained where multiple boundary conditions (including those where the function changes) intersect at a point.
A program provied in \cite{github} performs the linear algebra to locate such points and evaluates $f$ at them to determine its minimum. The minimum is $\frac{1}{4}$, attained at $(x_1, x_2, x_3, y_1, y_2) = (\frac{1}{4}, \frac{3}{8}, \frac{3}{8}, \frac{1}{2}, \frac{1}{2})$. It follows that, if $h^1(\pp^1, \O(\vec{f})^\vee \otimes \Sym^2\O(\vec{e}) \otimes \O(-2)) > 0$, then $u(\vec{e}, \vec{f}) \geq \frac{g+3}{4} - 4$.
\end{proof}

\begin{rem}
Aaron Landesman points out that our above Lemma \ref{coh-4} parallels \cite[Lemma 11]{BD4} of Bhargava. Bhargava's two cases $a_{11} = 0$ or $a_{11} = a_{12} = 0$ correspond to the fact that either $\O(2e_1 - f_2)$ or $\O(2e_1 - f_2)$ and $\O(e_1 + e_2 - f_2)$ are the only possible negative summands of $F_\alpha^\vee \otimes \Sym^2 E_\alpha$ for a non-factoring cover $\alpha$. %We believe the coefficient of $g$ here to be sharp in our setting. Heuristically, the strength of our bound seems to be between Bhargava's estimate and an unpublished improvement of Tsimerman and Shankar (which should apply only in the number field setting).
\end{rem}

Lemmas \ref{nfshave} and \ref{coh-4} together should be thought of as saying that $\H_{4,g}^{\circ}$ and $\H_{4,g}^{\nf}$ are ``good approximations" of each other. We can now complete the proof of Theorem \ref{thm4}.

\begin{proof}[Proof of Theorem \ref{thm4}]
Suppose $i < \frac{g+3}{4} - 4$. Consider the restriction maps
\[A^i(\H_{4,g}^{\mathrm{nf}}) \xrightarrow{\sim} A^i(\H_{4,g}^{\mathrm{nf}} \cap \H_{4,g}^\circ) \xleftarrow{\sim} A^i(\H_{4,g}^\circ).\]
Lemma \ref{coh-4} says the arrow on the left is an isomorphism; Lemma \ref{nfshave} says the arrow on the right is an isomorphism.
In turn then, we also have
\[R^i(\H_{4,g}^{\mathrm{nf}}) \xrightarrow{\sim} R^i(\H_{4,g}^{\mathrm{nf}} \cap \H_{4,g}^\circ) \xleftarrow{\sim} R^i(\H_{4,g}^\circ),\]
where $R^i$ of an open substack of $\H_{4,g}$ means the image of tautological classes under the restriction to that open.
Since $\H_{4,g}^\circ \subset \H_{4,g}'$, Lemma \ref{Hprime4} implies $A^i(\H_{4,g}^\circ) = R^i(\H_{4,g}^\circ)$. Thus, we conclude \[A^i(\H_{4,g}^{\mathrm{nf}}) = A^i(\H_{4,g}^\circ) = R^i(\H_{4,g}^\circ) = R^i(\H_{4,g}^{\mathrm{nf}}). \qedhere\]
\end{proof}

\subsection{Degree $5$} \label{op5sec}
Using Casnati's characterization of regular degree $5$ covers (Theorem \ref{C5}), a regular degree $5$ cover of is equivalent to the data of a rank $4$ vector bundle $E$; a rank $5$ vector bundle $F$; an isomorphism $(\det E)^{\otimes 2}\cong \det F$; and a global section of $\H om(E^{\vee}\otimes \det E,\wedge^2 F)$ satisfying certain conditions.
By Example \ref{p1ex}, if a cover $\alpha: C \to \pp^1$ has genus $g$, then $\deg(E_\alpha) = g+4$. In turn, $\deg(F_\alpha) = 2\deg(E_\alpha) = 2g + 8$.
To build the appropriate base stack, we start with $\Ub_{4,g+4}\times_
{\BSL_2}\Ub_{5,2g+8}$ which parametrizes tuples $(S, V, E, F)$ where $V$ is a rank $2$ vector bundle on $S$ with trivial determinant, and $E$ and $F$ are vector bundles of the appropriate ranks and degrees on $\pp V$. We let $\E$ denote the universal rank $4$ vector bundle and $\F$ the universal rank $5$ bundle on  the universal $\p^1$-bundle $\pi: \P \to \Ub_{4,g+4}\times_
{\BSL_2}\Ub_{5,2g+8}$. 
Since $\det \E^{\otimes 2} \otimes \det \F^\vee$ is a line bundle of degree $0$ on each fiber of $\pi$, we have $\det \E^{\otimes 2} \otimes \det \F^\vee \cong \pi^* \mathcal{L}$ where
$\mathcal{L}:=\pi_*(\det \E^{\otimes 2}\otimes \det \F^{\vee}),$
which is a line bundle by cohomology and base change. 
\begin{definition}
With notation as above, we define the stack $\B_{5,g}$ as the $\mathbb{G}_m$-torsor over $\Ub_{5,g+4}\times_{\BSL_2}\Ub_{5,2g+8}$ given by the complement of the zero section of the line bundle $\mathcal{L}$.
\end{definition}

By slight abuse of notation, we continue to denote the universal $\pi: \pp^1$-bundle by $\P \to \B_{5,g}$ and the universal rank $4$ and $5$ vector bundles on it by $\E$ and $\F$.
Let $z = \O_{\P}(1)$ and write
\[c_i(\E) = \pi^*a_i + (\pi^*a_i') z \qquad \text{and} \qquad c_i(\F) = \pi^*b_i + (\pi^*b_i') z. \]
for $a_i, b_i \in A^i(\B_{5,g})$ and $a_i', b_i' \in A^{i-1}(\B_{5,g})$.
Note that $2a_1' = b_1' = 2(g+4)$.
Moreover, by definition of $\B_{5,g}$, we have $c_1(\det \E^{\otimes 2} \otimes \det \F^\vee) = 0$, so $b_1 = 2a_1$.
Using Lemma \ref{Gmbundles} and Theorem \ref{Vchow} as in the previous subsection, we have
\begin{equation} \label{b5}\trun^{g+5} A^*(\B_{5,g}) = \trun^{g+5} \qq[c_2, a_1, \ldots, a_4, a_2', \ldots, a_4', b_2, \ldots, b_5, b_2', \ldots, b_5']. 
\end{equation}

We define $\aV_{5,g} := \H om(\E^{\vee}\otimes \det \E,\wedge^2 \F)$, and $\B_{5,g}'$ and $\B_{5,g}^\circ$ as in \eqref{Bprime} and \eqref{Bcirc}, respectively.
Given a map $S \rightarrow \H_{5,g}$, let $\pi_S: \P_S \rightarrow S$ denote the $\pp^1$-bundle and let $\E_S$ (resp. $\F_S)$ be the rank $4$ (resp. rank $5$) vector bundle on $\P_S$ associated to the family in the sense of Casnati--Ekedahl.
The open substacks $\H^\circ_{5,g} \subseteq \H'_{5,g} \subseteq \H_{5,g}$ are defined by
\begin{align*}
\{S \rightarrow \H^\circ_{5,g}\} &= \{S \rightarrow \H_{5,g} : R^1(\pi_S)_*( 
\H om(\E^\vee_S \otimes \det \E_S, \wedge^2 \F_S) \otimes \O_{\P_S}(-2)) = 0 \\
& \qquad \qquad \qquad \qquad \text{and $\F_S$ globally generated on fibers of $\pi_S$}\}. \\
\{S \rightarrow \H'_{5,g}\} &= \{S \rightarrow \H_{5,g} : R^1(\pi_S)_*( 
\H om(\E^\vee_S \otimes \det \E_S, \wedge^2 \F_S)) = 0 \\
& \qquad \qquad \qquad \qquad \text{and $\F_S$ globally generated on fibers of $\pi_S$}\}.
\end{align*}

The important feature of the open $\H_{5,g}'$ is that it can be realized as an open inside the vector bundle $\aW_{5,g}' := \pi_* \aV_{5,g}|_{\B_{5,g}'}$ over $\B_{5,g}'$.
\begin{lem}\label{Hprime5}
There is an open inclusion $\H_{5,g}' \to \aW_{5,g}'$. In particular, $A^*(\H_{5,g}') = R^*(\H_{5,g}')$ is generated by the CE classes $c_2, a_1, \ldots, a_4, a_2', \ldots, a_4', b_2, \ldots, b_5, b_2', \ldots, b_5'$.% Repeating all constructions with script letters, we obtain an open inclusion $\Hp'_{5,g} \to \Wp_{5,g}$.
\end{lem}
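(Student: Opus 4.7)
The plan is to mirror the strategy used in Lemmas \ref{hopen} and \ref{Hprime4}, transported through Casnati's equivalence of categories (Theorem \ref{C5}). The objects of $\aW_{5,g}'$ over a scheme $S$ are tuples $(S, V, E, F, \phi, \eta)$ where $(S, V, E, F, \phi) \in \B_{5,g}'(S)$ and $\eta \in H^0(\pp V, \H om(E^\vee \otimes \det E, \wedge^2 F))$. Note that because $\B_{5,g}$ is built out of $\Ub_{4,g+4} \times_{\BSL_2} \Ub_{5,2g+8}$, the bundles $E$ and $F$ are already required to be globally generated on every fiber of $\pp V \to S$ by Definition \ref{Stacks}. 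Letting $\Phi$ denote the isomorphism of \eqref{comp}, I will define an open substack $\mathcal{Y}_{5,g}' \subseteq \aW_{5,g}'$ by the condition that $D(\Phi(\eta)) \subseteq \pp E^\vee \to S$ is a family of smooth, geometrically connected curves. This is an open condition and in particular forces $\eta$ to have the right codimension on every fiber, and hence $\eta: E^\vee \otimes \det E \to \wedge^2 F$ is injective with locally free cokernel (the latter since the fiberwise condition is enough for cokernels of maps of vector bundles).

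Next I will compute the Hilbert polynomial of the fibers of $D(\Phi(\eta)) \to S$ using the resolution \eqref{res5}. This confirms that these fibers have arithmetic genus $g$, so we indeed obtain a family of regular degree $5$, genus $g$ Gorenstein covers. By Theorem \ref{C5}, the assignment $(E, F, \phi, \eta) \mapsto (D(\Phi(\eta)) \to S)$ and its inverse define an equivalence $\H_{5,g}' \cong \mathcal{Y}_{5,g}'$: the cohomological vanishing condition and the global generation of $\F$ in the definition of $\H_{5,g}'$ are exactly what is needed to produce, from a cover $\alpha$, an object of $\B_{5,g}'$. Conversely, the conditions on $\eta$ cutting out $\mathcal{Y}_{5,g}'$ reproduce a regular pentagonal cover by the inverse functor of Theorem \ref{C5}.

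For the Chow ring statement, the open inclusion $\H_{5,g}' \hookrightarrow \aW_{5,g}'$ together with excision produces a surjection $A^*(\aW_{5,g}') \twoheadrightarrow A^*(\H_{5,g}')$. Since $\aW_{5,g}' \to \B_{5,g}'$ is a vector bundle, the homotopy property gives $A^*(\aW_{5,g}') \cong A^*(\B_{5,g}')$, and the open inclusion $\B_{5,g}' \subseteq \B_{5,g}$ combined with \eqref{b5} shows that $A^*(\B_{5,g}')$ is generated by $c_2, a_1, \ldots, a_4, a_2', \ldots, a_4', b_2, \ldots, b_5, b_2', \ldots, b_5'$. These are exactly the CE classes of Definition \ref{CEring}, so Theorem \ref{CEgen} gives $R^*(\H_{5,g}') = A^*(\H_{5,g}')$.

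The main obstacle I anticipate is bookkeeping in the identification $\H_{5,g}' \cong \mathcal{Y}_{5,g}'$: one must verify that the functors of Theorem \ref{C5} are naturally compatible with the additional $\SL_2$-data (trivialization of $\det V$) and the $\mathbb{G}_m$-torsor structure encoding $\phi: (\det E)^{\otimes 2} \cong \det F$ that distinguishes $\B_{5,g}$ from $\Ub_{4,g+4} \times_{\BSL_2} \Ub_{5,2g+8}$. Given Theorem \ref{C5}, however, this amounts to tracing through definitions, as the universal isomorphism $\phi_\alpha$ produced in Section \ref{pentsec} matches the $\mathbb{G}_m$-torsor data by construction.
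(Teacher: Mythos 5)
Your proposal is correct and follows essentially the same route as the paper: define the open substack $\mathcal{Y}_{5,g}' \subseteq \aW_{5,g}'$ where $D(\Phi(\eta))$ is a family of smooth curves, check the genus via the Hilbert polynomial from \eqref{res5}, identify $\H_{5,g}' \cong \mathcal{Y}_{5,g}'$ using Theorem \ref{C5}, and then conclude by excision, the homotopy property for the vector bundle $\aW_{5,g}' \to \B_{5,g}'$, and Theorem \ref{Vchow}. The extra details you flag (right codimension of $\eta$, compatibility with the $\mathbb{G}_m$-torsor data) are exactly the points the paper leaves implicit, and your handling of them is fine.
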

\begin{proof}
The objects of $\aW_{5,g}'$ are tuples $(S,V, E, F, \phi, \eta)$ where  $(S,V, E, F, \phi) \in \B_{5,g}'$ and $\eta \in H^0(\pp V, \H om(E^\vee \otimes \det E, \wedge^2 F))$. Using the notation of Section \ref{pentsec}, we define $\mathcal{Y}_{5,g}' \subset \aW_{5,g}'$ to be the open substack defined by the condition that $D(\Phi(\eta)) \subset \pp E^\vee \to S$ is a family of smooth curves. Considering their Hilbert polynomials as determined by the resolution \eqref{res5}, we see that the fibers of $D(\Phi(\eta)) \to S$ have arithmetic genus $g$.
Applying Theorem \ref{C5}, we see that $\H_{5,g}'$ is equivalent to $\mathcal{Y}_{5,g}'$

%If we wish to work with $\pp^1$-fibrations, we replace all caligraphic letters with the same script letters and $\O_{\P}(-2)$ with $\omega_{\pi}$ in defining $\Hp_{5,g}'$.

By excision, the Chow ring of $\H_{5,g}'$ is generated by restriction of classes from $\aW_{5,g}'$. Since $\aW_{5,g}'$ is a vector bundle over $\B_{5,g}'$, their Chow rings are isomorphic, so the statement about generators follows from Theorem \ref{Vchow}.
\end{proof}

Now we show that the complements of the opens we have defined have high codimension.
\begin{lem} \label{b5circ}
The support of $R^1\pi_*(\aV_{5,g} \otimes \O_{\P}(-2))$ has codimension at least $\frac{g+4}{5} - 16$. That is, the codimension of the complement of $\B_{5,g}^\circ \subset \B_{5,g}$ is at least $\frac{g+4}{5} - 16$.
%In particular,
%\[A^*(\B_{5,g}^\circ)|_{t_5(g)} = \qq[c_2, a_1, \ldots, a_4, a_2', \ldots, a_4', b_2, \ldots, b_5, b_2', \ldots, b_5']|_{t_5(g)}.\]
\end{lem}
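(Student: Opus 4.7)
The plan is to mirror the strategy used in the proof of Lemma \ref{b4}, adapted from rank $(3,2)$ to rank $(4,5)$. By equation \eqref{simsp}, the codimension in $\B_{5,g}$ of the simultaneous splitting locus where $\E$ has splitting type $\vec{e} = (e_1 \leq \cdots \leq e_4)$ and $\F$ has splitting type $\vec{f} = (f_1 \leq \cdots \leq f_5)$ equals $h^1(\pp^1, \E nd(\O(\vec{e}))) + h^1(\pp^1, \E nd(\O(\vec{f})))$. The task is therefore to minimize this sum over splitting types $(\vec{e}, \vec{f})$ for which $R^1\pi_*(\aV_{5,g} \otimes \O_{\pp^1}(-2))$ is nonzero on the corresponding fiber.

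To identify those splitting types, I would rewrite $\aV_{5,g} = \H om(\E^\vee \otimes \det \E, \wedge^2 \F) \cong \E \otimes \det \E^\vee \otimes \wedge^2 \F$. On a fiber with $\sum e_i = g+4$ and $\sum f_j = 2g+8$, the line-bundle summands of $\aV_{5,g} \otimes \O_{\pp^1}(-2)$ are
\[
\O(e_i + f_j + f_k - e_1 - e_2 - e_3 - e_4 - 2), \qquad 1 \leq i \leq 4, \quad 1 \leq j < k \leq 5.
\]
Since $R^1\pi_* \O(m) \neq 0$ iff $m \leq -2$ and the minimum-degree summand occurs at $i=1,\, j=1,\, k=2$, the condition $R^1\pi_*(\aV_{5,g} \otimes \O(-2)) \neq 0$ is exactly $f_1 + f_2 \leq e_2 + e_3 + e_4$.

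Next, as in Lemma \ref{b4}, I would bound each $h^1$ from below by the inequality $h^1(\pp^1, \E nd(\O(\vec{v}))) \geq \sum_{i > j}(v_i - v_j - 1)$, obtained by summing $\max(0, v_i - v_j - 1)$ over ordered pairs and dropping the $\max$. Evaluating,
\[
h^1(\pp^1, \E nd(\O(\vec{e}))) \geq -3e_1 - e_2 + e_3 + 3e_4 - 6,
\]
\[
h^1(\pp^1, \E nd(\O(\vec{f}))) \geq -4f_1 - 2f_2 + 2f_4 + 4f_5 - 10,
\]
where $\binom{4}{2} = 6$ and $\binom{5}{2} = 10$ together account for the $-16$ in the statement. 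Setting $x_i = e_i/(g+4)$ and $y_j = f_j/(g+4)$, the problem reduces to minimizing the piecewise linear function
\[
\varphi(\vec{x}, \vec{y}) := -3x_1 - x_2 + x_3 + 3x_4 - 4y_1 - 2y_2 + 2y_4 + 4y_5
\]
on the compact polytope $D$ cut out by $\sum x_i = 1$, $\sum y_j = 2$, $0 \leq x_1 \leq \cdots \leq x_4$, $0 \leq y_1 \leq \cdots \leq y_5$, together with $y_1 + y_2 \leq x_2 + x_3 + x_4$.

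The main obstacle is the minimization itself: by linearity the minimum is attained at a vertex of $D$, but $D$ sits in a nine-variable polytope with many facets, so enumerating vertices and evaluating $\varphi$ by hand is impractical. I would therefore defer this step to a short script posted at \cite{github}, in the spirit of the one used for Lemma \ref{b4}. A direct check shows $\varphi = \frac{1}{5}$ at the vertex $(x_1,\ldots,x_4,y_1,\ldots,y_5) = (\frac{1}{5}, \frac{4}{15}, \frac{4}{15}, \frac{4}{15}, \frac{2}{5}, \frac{2}{5}, \frac{2}{5}, \frac{2}{5}, \frac{2}{5})$; once the script confirms $\min_D \varphi = \frac{1}{5}$, multiplying by $(g+4)$ and subtracting $16$ yields the claimed codimension bound $\frac{g+4}{5} - 16$.
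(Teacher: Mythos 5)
Your proposal is correct and follows essentially the same route as the paper: reduce to minimizing $h^1(\pp^1,\E nd(\O(\vec{e}))) + h^1(\pp^1,\E nd(\O(\vec{f})))$ via \eqref{simsp} over the splitting types violating the cohomological condition (your inequality $f_1+f_2 \leq e_2+e_3+e_4$ is the paper's $e_1+f_1+f_2 \leq g+4$ after using $\sum e_i = g+4$), bound each $h^1$ linearly to absorb the $-16$, and minimize the resulting piecewise linear function on the same polytope by computer, obtaining the minimum $\frac{1}{5}$ at the same point.
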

\begin{proof}
By \eqref{simsp}, the codimension of the support of $R^1\pi_*(\H om(\E^{\vee}\otimes \det \E,\wedge^2 \F) \otimes \O_{\P}(-2))$ is the minimum value of
\[h^1(\pp^1, \E nd(\O(\vec{e}))) + h^1(\pp^1, \E nd(\O(\vec{f})))\]
as we range over splitting types $\vec{e}$ of degree $g +4$ and $\vec{f}$ of degree $2g+8$ so that 
\[ h^1(\pp^1, \H om(\O(\vec{e})^\vee \otimes \det \O(\vec{e}), \wedge^2 \O(\vec{f})) \otimes \O_{\pp^1}(-2)) > 0 \quad \Leftrightarrow \quad e_1 + f_1 + f_2 - (g+4) \leq 0.\]

Similar to the proof of Lemma \ref{b4}, we may find this minimum by finding the minimum of the function
\[f(x_1,\ldots, x_4, y_1, \ldots, y_5) = 3x_4 + x_3 - x_2 - 3x_1 + 4y_5 + 2y_4 - 2y_2 - 4y_1\]
on the compact region $D$ defined by 
\begin{gather*}
0 \leq x_1 \leq \cdots \leq x_4, \quad x_1 + \ldots + x_4 = 1, \quad 0 \leq y_1 \leq \cdots \leq y_5, \quad y_1 + \ldots + y_5 = 2 \\
x_1 + y_1 + y_2 - 1 \leq 0.
\end{gather*}
Using our code \cite{github},
we find that the minimum of the linear function $f$ over $D$ is $\frac{1}{5}$ attained at
$(\frac{1}{5},\frac{4}{15},\frac{4}{15},\frac{4}{15},\frac{2}{5},\frac{2}{5},\frac{2}{5},\frac{2}{5},\frac{2}{5})$.
Thus,
\begin{align*}
\dim \Supp R^1 \pi_*(\aV_{5,g} \otimes \O_{\P}(-2)) &\geq (g+4) \cdot \min_D(f) - 16 = \frac{g+4}{5} - 16. \qedhere
\end{align*}
\end{proof}

For later use, let us note an immediate consequence of the previous lemma: Using excision and \eqref{b5}, we see
\begin{equation} \label{b5forlater}
\trun^{(g+4)/5 - 16} A^*(\B_{5,g}^\circ) = \trun^{(g+4)/5 - 16} \qq[c_2, a_1, \ldots, a_4, a_2', \ldots, a_4', b_2, \ldots, b_5, b_2', \ldots, b_5'].
\end{equation}

\begin{lem} \label{coh-5}
The codimension of the locus of smooth degree $5$ covers $\alpha$ such that \[h^1(\H om(E_\alpha^\vee \otimes \det E_\alpha, \wedge^2 F_\alpha) \otimes \O_{\pp^1}(-2)) > 0\]
has codimension at least $\frac{g+4}{5} - 16$. That is, the codimension of the complement of $\H^\circ_{5,g}$ inside $\H_{5,g}$ is at least $\frac{g+4}{5} - 16$.
\end{lem}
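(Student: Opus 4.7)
The strategy parallels the proof of Lemma~\ref{coh-4}. I will stratify $\H_{5,g}$ by the simultaneous splitting loci of the universal bundles $\E$ and $\F$, compute the codimension of each stratum in $\H_{5,g}$, and minimize over those strata on which the cohomological condition fails.

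First, I need a codimension formula for the stratum $\H_{5,g}(\vec{e}, \vec{f}) \subset \H_{5,g}$ on which $E_\alpha \cong \O(\vec{e})$ and $F_\alpha \cong \O(\vec{f})$. By Lemma~\ref{Hprime5}, $\H_{5,g}'$ is an open substack of the vector bundle $\aW_{5,g}' \to \B_{5,g}'$, whose fiber at a point with splitting types $(\vec{e}, \vec{f})$ has dimension $h^0(\pp^1, \H om(\O(\vec{e})^\vee \otimes \det \O(\vec{e}), \wedge^2 \O(\vec{f})))$; combining this with \eqref{simsp}, a dimension count entirely parallel to Deopurkar--Patel's derivation of \eqref{4codim} gives the codimension as
\[
u(\vec{e}, \vec{f}) := h^1(\pp^1, \E nd(\O(\vec{e}))) + h^1(\pp^1, \E nd(\O(\vec{f}))) - h^1(\pp^1, \H om(\O(\vec{e})^\vee \otimes \det \O(\vec{e}), \wedge^2 \O(\vec{f}))).
\]
The same formula extends to strata lying outside $\H_{5,g}'$ by a local deformation-theoretic argument using Theorem~\ref{C5}.

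Next, I translate both the failure condition and the constraints cutting out the admissible splitting types. Since the summands of $\H om(\O(\vec{e})^\vee \otimes \det \O(\vec{e}), \wedge^2 \O(\vec{f})) \otimes \O(-2)$ are $\O(e_i + f_j + f_k - (g+4) - 2)$ for $1 \leq i \leq 4$ and $1 \leq j < k \leq 5$, the $h^1$ condition of the lemma is equivalent to $e_1 + f_1 + f_2 \leq g+4$. Casnati's structure theorem (the degree-$5$ analogue of Proposition~\ref{known}) gives structural inequalities on $(\vec{e}, \vec{f})$ --- in particular $e_1 \geq 1$, global generation of $F_\alpha$ under the cohomological hypothesis, and a short list of forbidden very-negative summands of $\H om(\O(\vec{e})^\vee \otimes \det \O(\vec{e}), \wedge^2 \O(\vec{f}))$ arising from the embedding $\pp E_\alpha' \hookrightarrow \pp(\wedge^2 F_\alpha)$ and the Pfaffian description of $\alpha$. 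These bounds restrict the subtracted $h^1$ term in $u(\vec{e}, \vec{f})$ to contributions from a bounded set of summands, each of the form $\O(e_i + f_j + f_k - (g+4))$ for a short list of admissible $(i,j,k)$.

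Finally, after the change of variables $x_i = e_i/(g+4)$ and $y_j = f_j/(g+4)$, the codimension $u$ is bounded below by $(g+4)\cdot f(\vec{x}, \vec{y}) - C$ for an explicit piecewise linear function $f$ of $9$ variables and a constant $C = O(1)$, defined on the compact polytope cut out by the non-decreasing orderings, the degree sums $x_1+\cdots+x_4 = 1$ and $y_1+\cdots+y_5 = 2$, the structural constraints, and the failure condition $x_1 + y_1 + y_2 \leq 1$. Since $f$ is piecewise linear, its extrema are attained at the vertices of this polytope; I would invoke the code at \cite{github} (as in the proofs of Lemmas~\ref{b4}, \ref{coh-4}, and \ref{b5circ}) to enumerate these vertices and verify that the minimum of $f$ is exactly $1/5$, attained at the same point as in Lemma~\ref{b5circ}. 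This yields $u(\vec{e}, \vec{f}) \geq (g+4)/5 - 16$ on the failure locus. The main obstacle is pinning down the degree-$5$ analogue of Proposition~\ref{known} precisely enough --- identifying exactly which very-negative summands of $\H om(\O(\vec{e})^\vee \otimes \det \O(\vec{e}), \wedge^2 \O(\vec{f}))$ are forbidden by the Pfaffian/regularity structure --- so that the polytope has a tractable vertex set and the resulting minimum matches the $1/5$ slope from Lemma~\ref{b5circ}.
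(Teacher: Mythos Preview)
Your outline is exactly the paper's approach, and the obstacle you flag is the one substantive gap. The paper fills it by working directly with the skew-symmetric matrix $M_\eta = (L_{i,j})$ representing a section $\eta$, where $L_{i,j} \in H^0(\O(f_i+f_j)\otimes \O(\vec{e}) \otimes \O(-g-4))$, and examining the Pfaffian $Q_5 = L_{1,2}L_{3,4} - L_{1,3}L_{2,4} + L_{2,3}L_{1,4}$ of the submatrix obtained by deleting the last row and column. If $C$ is irreducible then $Q_5$ must be irreducible; in particular (i) $L_{1,2}$ and $L_{1,3}$ cannot both vanish identically, and (ii) $Q_5$ cannot be divisible by the coordinate $X_4$ corresponding to the top summand $\O(e_4)$. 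Writing out when $X_4$ divides each $L_{i,j}$ (namely when $f_i+f_j+e_3 < g+4$), these irreducibility constraints become the three inequalities
\[
f_1+f_3+e_4 \geq g+4, \qquad f_1+f_4+e_3 \geq g+4, \qquad f_2+f_3+e_3 \geq g+4.
\]
These (together with the orderings) force all but at most $11$ of the $40$ summands $\O(e_i+f_j+f_k-(g+4))$ to be nonnegative, so the subtracted $h^1$ in $u(\vec{e},\vec{f})$ involves only those eleven $\max\{0,\,\cdot\,\}$ terms. The constant $C$ is $16$, coming from the $-6$ and $-10$ in the crude lower bounds $h^1(\E nd(\O(\vec{e})))\geq 3e_4+e_3-e_2-3e_1-6$ and $h^1(\E nd(\O(\vec{f})))\geq 4f_5+2f_4-2f_2-4f_1-10$. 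With these three inequalities added to your polytope, the code check yields the minimum $1/5$ at the same vertex as in Lemma~\ref{b5circ}.
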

\begin{proof}
The cohomological statement depends only on the splitting type of $E_\alpha$ and $F_\alpha$. 
In the proof of \cite[Proposition 5.2]{DP}, Deopurkar--Patel show that  
the codimension of the locus of covers such that $E_\alpha \cong \O(\vec{e})$ and $F_\alpha \cong \O(\vec{f})$ has codimension
\begin{align} \label{pentcodim}
u(\vec{e}, \vec{f}) &:= h^1(\pp^1, \E nd(\O(\vec{e}))) + h^1(\pp^1, \E nd(\O(\vec{f}))) \\
&\qquad - h^1(\pp^1, \O(\vec{e}) \otimes \wedge^2 \O(\vec{f}) \otimes \O_{\pp^1}(-g-4)). \notag
\end{align}

A cover with these discrete invariants corresponds to a global section $\eta$ of 
\[\H om(\O(\vec{e})^\vee \otimes \det \O(\vec{e}), \wedge^2 \O(\vec{f})) = \O(\vec{e}) \otimes \wedge^2 \O(\vec{f}) \otimes \O_{\pp^1}(-g-4).\]
Such a global section can be represented by a skew-symmetric matrix
\begin{equation} \label{mat} M_\eta = \left(\begin{matrix} 0 & L_{1,2} & L_{1,3} & L_{1,4} & L_{1,5} \\ -L_{1,2} & 0 & L_{2,3} & L_{2,4} & L_{2,5} \\ -L_{1,3} & -L_{2,3} & 0 & L_{3,4} & L_{3,5} \\ -L_{1,4} & -L_{2,4} & -L_{3,4} & 0 & L_{4,5} \\ -L_{1,5} & -L_{2,5} & -L_{3,5} & -L_{4,5} & 0 \end{matrix} \right), \end{equation}
where $L_{i,j} \in H^0(\O(f_i + f_j) \otimes \O(\vec{e}) \otimes \O(-g-4))$.
The corresponding curve $C \subset \pp E^\vee$ is cut out by the $4 \times 4$ Pfaffians of the main minors of $M_\eta$. The Pfaffian of the submatrix obtained by deleting the last row and column is
\[Q_5 = L_{1,2} L_{3,4} - L_{1,3} L_{2,4} + L_{2,3}L_{1,4}.\]
If $Q_5$ is reducible, then $C$ is reducible. Indeed, if $C$ were irreducible, it would be contained in one component of $Q_5$, forcing every fiber to be contained in a hyperplane, violating the Geometric-Riemann-Roch theorem. 
Therefore, as observed in \cite[p. 21]{DP}, $L_{1,2}$ and $L_{1,3}$ cannot both be identically zero, and so
\begin{equation} \label{lower}
    f_1 + f_3 + e_4 - (g + 4) \geq 0.
\end{equation}Let $X_1, \ldots, X_4$ be coordinates on $\pp E^\vee$ corresponding to a choice of splitting $E \cong \O(\vec{e})$, so we think of $L_{i,j}$ as a linear form in $X_1, \ldots, X_4$ where the coefficient of $X_k$ is a section of $\O(f_i + f_j) \otimes \O(e_k) \otimes \O(-g-4)$, i.e. a homogeneous polynomial of degree $f_i + f_j + e_k - (g+4)$ on $\pp^1$.
If $Q_5$ is irreducible, it cannot be divisible by $X_4$.
Observe that if $f_i + f_j + e_3 - (g+4) < 0$, then the coefficients of $X_k$ for $k \leq 3$ vanish, so $X_4$ divides $L_{i,j}$. If $X_4$ divides $L_{1,2}$, $L_{1,3}$ and $L_{1,4}$, then $X_4$ divides $Q_5$ and $Q_5$ is reducible. To prevent this, we must have
\begin{equation} \label{imp2}
f_1 + f_4 + e_3 - (g+4) \geq 0.
\end{equation}
Similarly, if $X_4$ divides $L_{1,2}$, $L_{1,3}$ and $L_{2,3}$, then $X_4$ divides $Q_5$ and $Q_5$ is reducible. To prevent this, we must have
\begin{equation} \label{imp3}
f_2 + f_3 + e_3 - (g + 4) \geq 0.
\end{equation}

For splitting types satisfying \eqref{lower}, \eqref{imp2}, and \eqref{imp3}, at most $11$ of the $40$ summands of the form $\O(e_i + f_j + f_k - (g+4))$ in $\O(\vec{e}) \otimes \wedge^2 \O(\vec{f}) \otimes \O_{\pp^1}(-g-4)$ can be negative. For these allowed splitting types, we have
\begin{align*}
u(\vec{e}, \vec{f}) &= h^1(\pp^1, \E nd(\O(\vec{e}))) + h^1(\pp^1, \E nd(\O(\vec{f}))) \\
&\qquad - \sum_{i=1}^4 \max\{0, g+3 - f_1 - f_2 - e_i\} -\sum_{i=1}^3 \max\{0, g + 3 - f_1 - f_3 - e_i\} \\
&\qquad - \sum_{i = 1}^2 \max\{0, g+3 - f_1 - f_4 - e_i\} - \sum_{i = 1}^2 \max\{0, g+3 - f_2 - f_3 - e_i\}.
\end{align*}
We seek a lower bound on $u(\vec{e}, \vec{f})$ given that $\O(\vec{e}) \otimes \wedge^2 \O(\vec{f}) \otimes \O(-g-4)$ has a non-positive summand, i.e. in the region where $e_1 + f_1 + f_2 - (g+4) \leq 0$.
Note that
\begin{align*}
h^1(\pp^1, \E nd(\O(\vec{e})) &\geq 3e_4 + e_3 - e_2 - 3e_1 - 6  \\
h^1(\pp^1, \E nd(\O(\vec{f})) &\geq 4f_5 + 2f_4 - 2f_2 - 4f_1 - 10.
\end{align*}

Let us define a function of $9$ real variables
\begin{align*}
f(x_1,\ldots, x_4, y_1, \ldots, y_5) &:= 3x_4 + x_3 - x_2 - 3x_1 + 4y_5 + 2y_4 - 2y_2 - 4y_1 \\
&\quad - \sum_{i = 1}^4 \max\{0, 1 - y_1 - y_2 - x_i\}  -\sum_{i=1}^3 \max\{0, 1 - y_1 - y_3 - x_i\} \\
&\quad - \sum_{i = 1}^2 \max\{0, 1 - y_1 - y_4 - x_i\} - \sum_{i = 1}^2 \max\{0, 1 - y_2 - y_3 - x_i\}
\end{align*}
so that
\[u(\vec{e}, \vec{f}) \geq (g+4) f\left(\frac{e_1}{g+4}, \ldots, \frac{e_4}{g+4}, \frac{f_1}{g+4}, \ldots, \frac{f_5}{g+4}\right)
- 16.
\]
Now we wish to find the minimum of $f$ on the compact region defined by
\begin{gather*}
0 \leq x_1 \leq \cdots \leq x_4, \quad x_1 + \ldots + x_4 = 1, \quad 0 \leq y_1 \leq \cdots \leq y_5, \quad y_1 + \ldots + y_5 = 2 \\
 y_1 + y_3 + x_4 - 1 \geq 0, \quad
y_1 + y_4 + x_3 - 1 \geq 0, \qquad
y_2 + y_3 + x_3 -1 \geq 0 \\
x_1 + y_1 + y_2 - 1 \leq 0.
\end{gather*}
Since $f$ is piecewise linear, its extreme values are attained at points where multiple boundary conditions (including those where the linear function changes) intersect to give a single point. Our code \cite{github} performs the linear algebra to locate such points and determines that the minimum is $\frac{1}{5}$, which is attained at $(\frac{1}{5}, \frac{4}{15}, \frac{4}{15}, \frac{4}{15}, \frac{2}{5}, \frac{2}{5}, \frac{2}{5}, \frac{2}{5}, \frac{2}{5})$.
It follows that if $\vec{e}$ and $\vec{f}$ satisfy $h^1(\pp^1, \O(\vec{e}) \otimes \wedge^2\O(\vec{f}) \otimes \O(-g-4) \otimes \O(-2)) > 0$ then $u(\vec{e}, \vec{f}) \geq \frac{g + 4}{5} - 16$.
\end{proof}

We now prove Theorem \ref{thm5} to complete the $k = 5$ case.
\begin{proof}[Proof of Theorem \ref{thm5}]
Suppose $i < \frac{g+4}{5} - 16$. Then, by excision and Lemma \ref{coh-5}, the restriction map $A^i(\H_{5,g}) \to A^i(\H_{5,g}^\circ)$ is an isomorphism.
Hence, $R^i(\H_{5,g}) \to R^i(\H_{5,g}^\circ)$ is also an isomorphism. Since $\H_{5,g}^\circ \subseteq \H_{5,g}'$, Lemma \ref{Hprime5} tells us that $A^i(\H_{5,g}^\circ) = R^i(\H_{5,g}^\circ)$. Hence, we have shown
\[A^i(\H_{5,g}) = A^i(\H_{5,g}^\circ) =  R^i(\H_{5,g}^\circ) = R^i(\H_{5,g}). \qedhere\]
\end{proof}

\section{Conclusion and preview of subsequent work}
At this point, we have established that, for $k \leq 5$, a large portion of the Chow ring of the (non-factoring) Hurwitz space $\H_{k,g}^{\mathrm{nf}}$ is tautological. We did so by showing that $\H_{k,g}^{\nf}$ is closely approximated by an open substack $\H_{k,g}^\circ$ which, in turn, can be realized as an open substack of a vector bundle $\aW_{r,d}^\circ = \aW_{r,d}'|_{\B_{r,d}^\circ}$ over the stack $\B_{r,d}^\circ$. By \eqref{b4forlater} and \eqref{b5forlater}, we understand $A^*(\B_{r,d}^\circ) \cong A^*(\aW_{r,d}^\circ)$ well. First of all, we know $A^*(\B_{r,d}^\circ)$ is generated by classes which pullback to the CE classes on $\H_{k,g}^\circ$; this is how we saw $A^*(\H_{k,g}^\circ) = R^*(\H_{k,g}^\circ)$. However, we actually know a bit more: the generators we list for $A^*(\B_{r,d}^\circ) \cong A^*(\aW_{r,d}^\circ)$ satisfy no relations in low degrees. In other words, all relations that their pullbacks to $\H_{k,g}^\circ$ satisfy come from performing excision on the complement of $\H_{k,g}^\circ \subset \aW_{r,d}^\circ$.

Determining these relations will be the focus of our subsequent work \cite{part2}. The central innovation there is to find an appropriate resolution of the complement of $\H_{k,g}^\circ \subset \aW_{r,d}^\circ$, which allows us to determine \emph{all} relations in degrees up to roughly $g/k$. Furthermore, we will prove that the relations we find among the restrictions of CE classes to $\H_{k,g}^\circ$ actually hold on \emph{all} of $\H_{k,g}$. Using the codimension bounds we established in Section 5 here, results about the structure of $A^*(\H_{k,g}^\circ) = R^*(\H_{k,g}^\circ)$ will then translate into results about the structure of $A^*(\H_{k,g}^{\nf})$ and $R^*(\H_{k,g})$ in degrees up to roughly $g/k$.

\bibliographystyle{amsplain}
\bibliography{refs}
\end{document}